\numberwithin{equation}{section}
\theoremstyle{plain}
\newtheorem{Thm}{Theorem}[section]
\newtheorem{Lemma}[Thm]{Lemma}
\newtheorem{Prop}[Thm]{Proposition}
\newtheorem{Rem}[Thm]{Remark}
\newtheorem{Def}[Thm]{Definition}
\def\I{\mathcal{I}}
\def\i{\mathrm{i}}
\begin{document}
	
\title{Optimal Rate of Convergence for Vector-valued Wiener-It\^o Integral}

\author{Huiping \textsc{Chen}}
\address{LMAM, School of Mathematical Sciences, Peking University, Beijing 100871, China}
\email{chenhp@pku.edu.cn}

\subjclass[2020]{60F05, 60G15, 60H05,  60H07}

\keywords{Optimal rate of convergence, vector-valued Wiener-It\^o integral, Malliavin calculus, Stein's method, method of cumulants. \\\indent 	
	This work is supported by NSFC (No. 11731009, No. 12231002) and Center for Statistical Science, PKU. The author is extremely grateful to Prof. Yong Chen and Prof. Yong Liu for valuable comments and discussions}
	
\begin{abstract}
	We investigate the optimal rate of convergence in the multidimensional normal approximation of vector-valued Wiener-It\^o integrals of which components all belong to the same fixed Wiener chaos. Combining Malliavin calculus, Stein's method for normal approximation and method of cumulants, we obtain the optimal rate of convergence with respect to a suitable smooth distance. As applications, we derive the optimal rates of convergences for complex Wiener-It\^o integrals, vector-valued Wiener-It\^o integrals with kernels of step functions and vector-valued Toeplitz quadratic functionals.
\end{abstract}
	\maketitle

	\section{Introduction}
	On a complete probability space $(\Omega, \mathcal{F}, P)$, let $X=\left\{ X(h): h\in\mathfrak{H}\right\} $ be an isonormal Gaussian process over some real separable Hilbert space $\mathfrak{H}$, where the $\sigma$-algebra $\mathcal{F}$ is generated by $X$. Let $\left\lbrace F_n: n\geq 1\right\rbrace $ be a sequence of random variables living in a fixed Wiener chaos of $X$ with unit variance. In recent years, the research associated with the normal approximation of $\left\lbrace F_n: n\geq 1\right\rbrace $ has always been concerned. In 2005, Nualart and Peccati published the seminal article \cite{NP2005} and first proved Fourth Moment Theorem which shows that $\left\lbrace F_n: n\geq 1\right\rbrace $ converges to a standard normal random variable $N$ if and only if  $\mathrm{E}\left[ F_n^4\right]\rightarrow 3 $ as $n\rightarrow\infty$. Shortly afterwards, a multidimensional version of this characterization was given by Peccati and Tudor in \cite{Peccati2005}. By using techniques of Malliavin calculus, Nualart and Ortiz-Latorre proposed a new proof of Fourth Moment Theorem in \cite{NUALART2008}. Further, in \cite{NP2009}, Nourdin and Peccati combined Malliavin Calculus with Stein's method to derive quantitative and explicit upper bounds in the Gaussian approximation of $\left\lbrace F_n: n\geq 1\right\rbrace $. In this paper, we focus on the optimal rate of convergence with respect to a suitable distance under the assumption that $F_n$ converges to $N$ in distribution. We say that a positive sequence $\left\lbrace \varphi(n):n\geq1 \right\rbrace$ decreasing to zero provides an optimal rate of convergence with respect to some distance $d(\cdot,\cdot)$, if $d(F_n,N)\asymp \varphi(n)$. Here, for two numerical sequences $\left\lbrace a_n:n\geq1 \right\rbrace $ and $\left\lbrace b_n:n\geq1 \right\rbrace $, we write $a_n\asymp b_n$ if there exist two constants $0<c_1<c_2<\infty$ not depending on $n$ such that $c_1b_n\leq a_n\leq c_2b_n$ for $n$ sufficiently large. Throughout the paper, we denote by $c_1$ and $c_2$ two finite positive constants that not depend on $n$ and can vary from line to line.

	Fix an integer $q\geq2$. For a sequence of random variables $\left\lbrace F_n=I_q(f_n): n\geq 1\right\rbrace $ with unit variance and all $f_n\in \mathfrak{H}^{\odot p}$, assume that $F_n$ converges to $N$ in distribution. There are complete characterisation of optimal rate of convergence with respect to some suitable distance $d(\cdot,\cdot)$.
	In \cite{NP2009b}, Nourdin and Peccati showed how to detect optimal Berry-Esseen bounds in the normal approximation of functionals of $X$ and further refined the main results they proven in \cite{NP2009}. Specifically, they supposed additionally that as $n \rightarrow \infty$, the two-dimensional random vector 
	\begin{equation}\label{Additional condition}
		\left( F_n, \frac{1-q^{-1}\left\| DF_n\right\|^{2}_{\mathfrak{H}} }{\sqrt{\mathrm{Var}\left( q^{-1}\left\| DF_n\right\|^{2}_{\mathfrak{H}}\right) }}\right) \overset{d}{\rightarrow}(N_1,N_2),
	\end{equation}
 as $n \rightarrow \infty$, where $\left(N_1, N_2\right)$ is a centered two-dimensional Gaussian vector satisfying $\mathrm{E}\left(N_1^2\right)=\mathrm{E}\left(N_2^2\right)=1$ and $\mathrm{E}\left(N_1 N_2\right)=\rho$. If $\rho\neq0$, then $\frac{P(F_n\leq z)-P(N\leq z)}{\sqrt{\mathrm{Var}\left( q^{-1}\left\|  D F_n\right\| _{\mathfrak{H}}^2\right) }}$ converges to a nonzero limit for every $z\in\mathbb{R}$. Therefore,
 \begin{equation}\label{Rate of NP09}
 	d_{\mathrm{Kol}}\left(F_n, N\right)\asymp \sqrt{\mathrm{Var}\left( q^{-1}\left\|  D F_n\right\| _{\mathfrak{H}}^2\right) },
 \end{equation}
 where $d_{\mathrm{Kol}}\left(F_n, N\right)$ is Kolmogorov distance defined as $$d_{\mathrm{Kol}}\left(F_n, N\right)=\sup\limits_{z\in\mathbb{R}}\left|P(F_n\leq z)- P(N\leq z)\right|.$$
 Note that $\sqrt{\mathrm{Var}\left( q^{-1}\left\|  D F_n\right\| _{\mathfrak{H}}^2\right) }\asymp\sqrt{\mathrm{E}\left[F_n^4 \right]-3 }$ (see \cite[Lemma 5.2.4]{nourdin2012normal}). In \cite[Proposition 3.6]{NP2009b}, they proposed that, if $q$ is even, sufficient conditions for \eqref{Additional condition} are as $n \rightarrow \infty$,
	\begin{equation}\label{Condition 1}
		\sum_{r=1}^{q-1}\sum_{l=1}^{2(q-r)-1}\frac{\left\|\left(f_n \tilde{\otimes}_r f_n\right) \otimes_l\left(f_n \tilde{\otimes}_r f_n\right)\right\|_{\mathfrak{H} ^{\otimes 2(2(q-r)-l)}}}{\mathrm{Var}\left( q^{-1}\left\|  D F_n\right\| _{\mathfrak{H}}^2\right) } \rightarrow 0,
	\end{equation} 
 and
	\begin{equation}\label{Condition 2}
		-q q !(q / 2-1) !\binom{q-1}{q/2-1}^2 \frac{\left\langle f_n, f_n \tilde{\otimes}_{q / 2} f_n\right\rangle_{\mathfrak{H}^{\otimes q}}}{\sqrt{\mathrm{Var}\left( q^{-1}\left\|  D F_n\right\| _{\mathfrak{H}}^2\right) }} \rightarrow \rho.
	\end{equation}
In this case, if $\rho \neq 0$, then \eqref{Rate of NP09} is valid, that is, $\sqrt{\mathrm{Var}\left( q^{-1}\left\|  D F_n\right\| _{\mathfrak{H}}^2\right) }$ is the optimal rate of convergence for $F_n$ with respect to Kolmogorov distance. However, when $q$ is even and $\rho=0$, or, $q$ is odd and \eqref{Condition 1} is satisfied (which imply \eqref{Additional condition} with $\rho=0$), the optimal rate of convergence with respect to Kolmogorov distance is unknown. In \cite{Hermine2012}, Bierm\'e, Bonami, Nourdin and Peccati gave a complete solution to the optimal rate of convergence in the case of a suitable smooth distance. They proved that $$d\left(F_n, N\right)\asymp\max\left(\left|\mathrm{E}\left[F_n^3\right]\right|,\mathrm{E}\left[F_n^4\right]-3\right),$$ where $d\left(F_n, N\right)=\sup \left|\mathrm{E}\left[h\left(F_n\right)\right]-\mathrm{E}[h(N)]\right|$, and $h$ runs over the class of all real functions with a second derivative bounded by one. Note that, it is shown in \cite[Prposition 3.1]{NP2010} that $$\left|\mathrm{E}\left[F_n^3\right]\right|\leq c\sqrt{\mathrm{E}\left[F_n^4\right]-3},$$ where $c$ is constant only depending on $q$. Furthermore, in \cite{NP2015}, Nourdin and Peccati obtained that $\left\lbrace \max \left(\left|\mathrm{E}\left[F_n^3\right]\right|, \mathrm{E}\left[F_n^4\right]-3\right):n\geq1\right\rbrace$ also provides an optimal rate of convergence in total variation, a non-smooth distance. That is, 
	\begin{equation}
		d_{\mathrm{TV}}\left(F_n, N\right)\asymp \max \left(\left|\mathrm{E}\left[F_n^3\right]\right|, \mathrm{E}\left[F_n^4\right]-3\right),
	\end{equation}
	where $d_{\mathrm{TV}}\left(F_n, N\right)$ is total variation distance defined as $$d_{\mathrm{TV}}\left(F_n, N\right)=\sup\limits_{A\in\mathcal{B}(\mathbb{R})}\left|P(F_n\in A)- P(N\in A)\right|.$$

	As far as we know, there are few references studying the optimal rate of convergence for a sequences of random vectors of which components are functionals of some isonormal Gaussian process. In \cite{Campese2013}, Campese extended the results of \cite{NP2009b} to the multidimensional case and developed techniques for determining the exact asymptotic speed of
	convergence in the multidimensional normal approximation of smooth functionals of isonormal Gaussian processes. Let $\left\{F_n=(F_{n,1}, \ldots,F_{n,d}): n \geq 1\right\}$ be a sequence of $d$-dimensional random vectors with $F_{n,i}=I_{q_i}(f_{n,i})$ and $f_{n,i}\in\mathfrak{H}^{\odot q_i}$ for $1\leq i\leq d$. Suppose that the covariance matrix of $F_n$ is $C$ and $F_n$ converges in distribution to $d$-dimensional normal random vector $Z \sim \mathscr{N}_d(0,C)$. Analogously to the one-dimensional case, the random sequence
	\begin{equation}\label{Sequence}
		\left\lbrace \left( F_n, \frac{q_j^{-1}\left\langle DF_{n,i},DF_{n,j}\right\rangle _{\mathfrak{H}}-\mathrm{E}\left[q_j^{-1}\left\langle DF_{n,i},DF_{n,j}\right\rangle _{\mathfrak{H}} \right]}{\sqrt{\mathrm{Var}\left(q_j^{-1}\left\langle DF_{n,i},DF_{n,j}\right\rangle _{\mathfrak{H}}\right) }} \right): n\geq1 \right\rbrace 
	\end{equation}
	plays a crucial role. To be detailed, let $g: \mathbb{R}^d \rightarrow \mathbb{R}$ be three-times differentiable with bounded derivatives up to order three. Suppose that for $1 \leq i, j \leq d$, the random sequences \eqref{Sequence} converge in law to a centered Gaussian random vector $\left(Z, \tilde{Z}_{i j}\right)$ whenever
	\begin{equation}\label{Condition 3}
    	\sqrt{\mathrm{Var}\left(q_j^{-1}\left\langle DF_{n,i},DF_{n,j}\right\rangle _{\mathfrak{H}}\right) } \asymp \sqrt{\sum_{i,j=1}^{d}\mathrm{Var}\left(q_j^{-1}\left\langle DF_{n,i},DF_{n,j}\right\rangle _{\mathfrak{H}}\right) }.
	\end{equation}
	\cite[Theorem 3.4, Corollary 3.6]{Campese2013} show that the $\liminf$ and $\limsup$ of the sequence 
	\begin{equation}
		\left\lbrace \frac{\mathrm{E}\left[g\left(F_n\right)\right]-\mathrm{E}[g(Z)]}{\sqrt{\sum_{i,j=1}^{d}\mathrm{Var}\left(q_j^{-1}\left\langle DF_{n,i},DF_{n,j}\right\rangle _{\mathfrak{H}}\right) }}: n\geq 1\right\rbrace  
	\end{equation}
 coincide with those of 
 \begin{equation}\label{Condition 4}
 	\left\lbrace \frac{1}{3} \sum_{i, j, k=1}^d \frac{\sqrt{\mathrm{Var}\left(q_j^{-1}\left\langle DF_{n,i},DF_{n,j}\right\rangle _{\mathfrak{H}}\right) }}{\sqrt{\sum_{i,j=1}^{d}\mathrm{Var}\left(q_j^{-1}\left\langle DF_{n,i},DF_{n,j}\right\rangle _{\mathfrak{H}}\right) }}\rho_{i j k} \mathrm{E}\left[\partial_{i j k} g(Z)\right]:n\geq 1 \right\rbrace ,
 \end{equation}
where the constants $\rho_{i j k}$ are defined by $\rho_{i j k}=\mathrm{E}\left[\tilde{Z}_{i j} Z_k\right]$ whenever \eqref{Condition 3} is true and $\rho_{i j k}=0$ otherwise. If the $\liminf$ and $\limsup$ of \eqref{Condition 4} are not equal to zero and finite, then $$\sqrt{\sum_{i,j=1}^{d}\mathrm{Var}\left(q_j^{-1}\left\langle DF_{n,i},DF_{n,j}\right\rangle _{\mathfrak{H}}\right) }$$ provides an optimal rate of convergence for $F_n$ with respect to the distance defined as 
\begin{equation}\label{Condition 5}
	d(F_n,Z)=\sup\left\lbrace\left|\mathrm{E}\left[ g(F_n) \right] - \mathrm{E}\left[ g(Z) \right]   \right|  \right\rbrace, 
\end{equation}
where $g: \mathbb{R}^d \rightarrow \mathbb{R}$ runs over the class of all real functions that are three-times differentiable with bounded derivatives up to order three. Sufficient conditions analogously to \eqref{Condition 1} and \eqref{Condition 2} for the convergence in law of random sequence \eqref{Sequence} to a centered Gaussian random vector are established in \cite[Proposition 4.2]{Campese2013}. One should note that the techniques developed by Campese in \cite{Campese2013} is extensive and heuristic. In the Campese's framework, smooth functionals of Gaussian processes of which components not necessarily belong to Wiener chaoses are considered and covariances of these smooth functionals are allowed to fluctuate. However, due to the assumption that the random sequence \eqref{Sequence} converges in law, it seems that Campese's findings in \cite{Campese2013} do not offer a complete characterization of the optimal rate of convergence for $F_n$ with respect to the distance defined as \eqref {Condition 5}. For example, Campese provided a counterexample to show that techniques he established can not work if the kernels involved are step functions (see \cite[Section 5.1]{Campese2013} or Section \ref{Section 4.2} in this paper). Note that, in this counterexample, all components of $F_n$ belong to the second Wiener chaos of some isonormal Gaussian process. In addition, Campese remarked in \cite[Section 5.4]{Campese2013} that for a non-trivial application of the results he obtained to Breuer-Major central limit theorem, at least one of the integers $q_i$, the order of $F_{n,i}$, should be even. 
	
In this paper, we consider a sequence of $d$-dimensional random vectors $\left\{F_n=\right. $ $\left.(F_{n,1}, \ldots, F_{n,d}): n \geq 1\right\}$ of which components all belong to $q$-th Wiener chaos, where $q\geq2$. Still suppose that the covariance matrix of $F_n$ is $C$ and $F_n$ converges in distribution to $d$-dimensional normal random vector $Z \sim \mathscr{N}_d(0,C)$. Without any other assumptions, we exhaustively investigate the optimal rate of convergence with respect to the smooth distance $\rho(\cdot,\cdot)$ defined as  
	\begin{equation}
		\rho(F,G)=\sup\left\lbrace\left|\mathrm{E}\left[ g(F) \right] - \mathrm{E}\left[ g(G) \right]   \right|  \right\rbrace,
	\end{equation} 
	where $g:\mathbb{R}^d\rightarrow \mathbb{R}$ runs over the class of all four-times continuously differentiable functions such that $g$ and all of its derivatives of order up to four are bounded by one, and $F$, $G$ are two $d$-dimensional random vectors. Specifically, in Theorem \ref{Main result1}, we get that 
	\begin{equation}
		\rho\left(F_n, Z\right)\asymp M(F_n):= \max\left\lbrace \sum_{|m|=3}\left|  \kappa_{m}(F_n)\right|, \sum_{i=1}^{d}\kappa_{4e_i}(F_n)\right\rbrace,
	\end{equation}
	where for a multi-index $m$, $\kappa_{m}(F_n)$ is the cumulant of order $m$ of $F_n$ (see Definition \ref{Def of cumulant}). That is, the
	concise expression $\max\left\lbrace \sum_{|m|=3}\left|  \kappa_{m}(F_n)\right|, \sum_{i=1}^{d}\kappa_{4e_i}(F_n)\right\rbrace$ is the optimal rate of convergence with respect to the smooth distance $\rho(\cdot,\cdot)$. One can show that $$\sum_{|m|=3}\left|  \kappa_{m}(F_n)\right|\leq c \sqrt{\sum_{i=1}^{d}\kappa_{4e_i}(F_n)},$$ where $c$ is a constant only depending on $q$ and $C$, by combining the interpolation techniques (see \cite[Theorem 4.2]{NP2010} or \cite[Theorem 7.2]{NPR2010}) and \cite[Equation (6.2.6)]{nourdin2012normal}. This is an extension of \cite[Prposition 3.1]{NP2010} to the multidimensional case. Note that $M(F_n)$ can be acquired by either one of the two quantities $\sum_{|m|=3}\left|  \kappa_{m}(F_n)\right|$ and $\sum_{i=1}^{d}\kappa_{4e_i}(F_n)$ (see Section \ref{Section 4} for examples of both cases). Compared to the techniques used in \cite{Campese2013} by Campese, besides Malliavin calculus and Stein's method for normal approximation, we also make full use of method of cumulants. More precisely, in Proposition \ref{expand}, we expand $\mathrm{E}[\left\langle F, \nabla g(F)\right\rangle _{\mathbb{R}^d}]$ as a sum associated with cumulants and related $\Gamma$-random variables by utilizing the formula of integration by parts (see Lemma \ref{IBP}) and the relation between cumulant and related $\Gamma$-random variable (see Theorem \ref{cumulant and Gamma}). On the one hand, combining this expansion and technical estimates of $\Gamma$-random variable (see Proposition \ref{estimation of Gamma}), we prove the upper bound, namely there exists a constant $0<c_2<\infty$ such that for $n$ large enough, $$	\rho\left(F_n, Z\right)\leq c_2 \max\left\lbrace \sum_{|m|=3}\left|  \kappa_{m}(F_n)\right|, \sum_{i=1}^{d}\kappa_{4e_i}(F_n)\right\rbrace.$$ On the other hand, we delicately set up several specific test functions $g$ (see Lemma \ref{specific test function}) to get the lower bound. That is, there exists a constant $0<c_1<\infty$ such that for $n$ large enough, $$	\rho\left(F_n, Z\right)\geq c_1 \max\left\lbrace \sum_{|m|=3}\left|  \kappa_{m}(F_n)\right|, \sum_{i=1}^{d}\kappa_{4e_i}(F_n)\right\rbrace.$$ 
	Note that under the assumption that all components of $F_n$ belong to the same fixed Wiener chaos, the optimal rate of convergence we obtained is comparatively concise. In some degree, this result is consistent with \cite[Theorem 1.5]{NOREDDINE20111008}, in which Noreddine and Nourdin proved that
	\begin{equation}
		\sup\left\lbrace\left|\mathrm{E}\left[ g(F_n) \right] - \mathrm{E}\left[ g(Z) \right]   \right|  \right\rbrace
		\leq c_1 \sum_{i=1}^{d}\sqrt{\kappa_{4e_i}(F_n)}, 
	\end{equation}
	where $g: \mathbb{R}^d \rightarrow \mathbb{R}$ runs over the class of all twice continuously differentiable functions of which second derivatives are bounded by one. We make this assumption since we are not sure whether estimates of cumulants and related $\Gamma$-random variables analogue to Proposition \ref{estimation of Gamma} are still valid for vector-valued Wiener-It\^o integrals of which components allow to belong to Wiener chaoses with different orders. If it is true, we can remove the restriction that all components of vector-valued Wiener-It\^o integrals belong to the same fixed Wiener chaos. This improving topic will be investigated in other works.
	
	 As an application, we first consider a sequence of complex Wiener-It\^o integrals $\left\lbrace F_n: n\geq1\right\rbrace$ in Section \ref{Section 4.1}. Assume that $F_n$ converges in distribution to a complex normal random variable $Z$ with the same covariance matrix as $F_n$. Combining Theorem \ref{Main result1} and the fact that the real and imaginary parts of a complex Wiener-It\^o integral can be expressed as a real Wiener-It\^o integral respectively (see \cite[Theorem 3.3]{chen2017fourth}), we yield Theorem \ref{Main result2}, which states that 
	\begin{equation}
		\rho\left(F_n, Z\right) \asymp \max\left\lbrace \left| \mathrm{E}\left[F_n^3 \right] \right|, \left|\mathrm{E}\left[F_n^2\bar{F_n} \right] \right|, \mathrm{E}\left[\left| F_n \right| ^4 \right]-2\left(\mathrm{E}\left[ \left| F_n \right| ^2\right]  \right) ^2-\left| \mathrm{E}\left[ F_n^2\right] \right| ^2 \right\rbrace.
	\end{equation}
As an example, we get the optimal rate of convergence for a statistic associated with the least squares estimator of the drift coefficient for the complex-valued Ornstein-Uhlenbeck process. In Section \ref{Section 4.2}, we consider the counterexample provided by Campese in \cite[Section 5.1]{Campese2013} and apply our conclusion to derive the optimal rate of convergence for a sequence of vector-valued Wiener-It\^o integrals with kernels of step functions. In Section \ref{Section 4.3}, by combining our techniques and some results from the literature such as \cite{Campese2013,Ginovian1994,Ginovyan2005},  we get the optimal rate of convergence in the multidimensional normal approximation of
vector-valued Toeplitz quadratic functionals.

The paper is organized as follows. Section \ref{Section 2} introduces some elements of the isonormal Gaussian process, Malliavin calculus, method of cumulants and multidimensional Stein's method for normal approximation. In Section \ref{Section 3}, we obtain the optimal rate of convergence for a sequence of vector-valued Wiener-It\^o integrals with respect to smooth distance $\rho(\cdot,\cdot)$. In Section \ref{Section 4}, we apply the main results we proved in Section \ref{Section 3} to get the optimal rates of convergences for a sequence of complex Wiener-It\^o integrals, vector-valued Wiener-It\^o integrals with kernels of step functions and vector-valued Toeplitz quadratic functionals.
	
	\section{Preliminaries}\label{Section 2}
	In this section, we briefly introduce some basic theories of the isonormal Gaussian process, Malliavin calculus, cumulants and multidimensional Stein's method. See \cite{chenliu2019,ito1952complex,nourdin2012normal,nualart2006malliavin} for more details.
	\subsection{Isonormal Gaussian process}
	
	Suppose that $\mathfrak{H}$ is a real separable Hilbert space with an inner product denoted by $\left\langle \cdot, \cdot\right\rangle _\mathfrak{H}$. Let $\|h\|_{\mathfrak{H}}$ denote the norm of $h\in\mathfrak{H}$. Consider a real isonormal Gaussian process $X=\left\{ X(h): h\in\mathfrak{H}\right\} $ defined on a complete probability space $(\Omega, \mathcal{F}, P)$, where the $\sigma$-algebra $\mathcal{F}$ is generated by $X$. That is, $X=\left\{ X(h): h\in\mathfrak{H}\right\} $ is a Gaussian family of centered random variables such that $\mathrm{E}\left[ X(h)X(g) \right]=\left\langle h,g\right\rangle _\mathfrak{H}$ for any $h,g\in \mathfrak{H}$. 
	
	For $q\geq0$, the $q$-th Wiener-It\^{o} chaos $\mathcal{H}_q(X)$ of $X$ is the closed linear subspace of $L^2(\Omega)$ generated by the random variables $\left\{ H_q(X(h)): h\in\mathfrak{H}, \|h\|_{\mathfrak{H}}=1\right\} $, where $H_q(x)$ is the Hermite polynomial of degree $q$ defined by the equality
	\begin{equation*}
		\exp\left\lbrace tx-\frac{1}{2}t^2\right\rbrace =\sum_{q=0}^{\infty}\frac{t^q}{q!}H_q(x).
	\end{equation*}
	Let $\mathfrak{H}^{\otimes q}$ and $\mathfrak{H}^{\odot q}$ denote the $q$-th tensor product and the $q$-th symmetric tensor product of $\mathfrak{H}$, respectively. For any $q \geq 1$, the mapping $I_q\left(h^{\otimes q}\right)=H_q(X(h))$ can be extended to a linear isometry between the symmetric tensor product $\mathfrak{H}^{\odot q}$, equipped with the norm $\sqrt{q!}\|\cdot\|_{\mathfrak{H}^{\otimes q}}$, and the $q$-th Wiener-It\^{o} chaos $\mathcal{H}_{q}(X)$. For $q=0$, we write $I_0(c)=c$ for $c \in \mathbb{R}$. For any $f\in \mathfrak{H}^{\odot q}$, the random variable $I_q(f)$ is called the real $q$-th Wiener-It\^o integral of $f$ with respect to $X$.	Wiener-It\^o chaos decomposition of $L^2(\Omega,\sigma(X),P)$ implies that $L^{2}(\Omega)$ can be decomposed into the infinite orthogonal sum of the spaces $\mathcal{H}_{n}(X)$. That is, any random variable $F \in L^2(\Omega,\sigma(X),P)$ admits a unique expansion of the form
	\begin{equation*}
		F=\sum_{q=0}^{\infty} I_{q}\left(f_{q}\right),	
	\end{equation*}
	where $f_{0}=\mathrm{E}[F]$, and $f_{q} \in \mathfrak{H}^{\odot q}$ with $q \geq1$ are uniquely determined by $F$.
	
	Let $\left\lbrace\eta_k,k\geq1 \right\rbrace $ be a complete orthonormal system in $\mathfrak{H}$. Given $f\in\mathfrak{H}^{\odot p}$, $g\in\mathfrak{H}^{\odot q}$, for $r=0,\dots,p\land q$, the $r$-th contraction of $f$ and $g$ is an element of $\mathfrak{H}^{\otimes (p+q-2r)}$ defined by 
	\begin{equation*}
		f\otimes_rg=\sum_{i_1,\ldots,i_r=1}^{\infty}\left\langle f,\eta_{i_1}\otimes\cdots\otimes \eta_{i_r}\right\rangle _{\mathfrak{H}^{\otimes r}}\otimes\left\langle g,\eta_{i_1}\otimes\cdots\otimes \eta_{i_r}\right\rangle _{\mathfrak{H}^{\otimes r}}.
	\end{equation*}
	Notice that $f\otimes_rg$ is not necessarily symmetric, we denote by $f\tilde{\otimes}_rg$ or $\mathrm{symm}(f\otimes_r g)$ its symmetrization. \cite[Proposition 2.7.10]{nourdin2012normal} provides the product formula for real  multiple Wiener-It\^{o} integrals as follows. For $f\in\mathfrak{H}^{\odot p}$ and $g\in\mathfrak{H}^{\odot q}$ with $p,q\geq0$, 
	\begin{equation}\label{Product_formula}
		I_p(f)I_q(g)=\sum_{r=0}^{p\land q}r!\binom{p}{r}\binom{q}{r}I_{p+q-2r}(f\tilde{\otimes}_rg).
	\end{equation}

	Next, we introduce the complex isonormal Gaussian process. We complexify $\mathfrak{H}$, $L^2(\Omega)$ in the usual way and denote by $\mathfrak{H}_{\mathbb{C}}$, $L^2_{\mathbb{C}}(\Omega)$ respectively. Suppose $\mathfrak{h}=f+\i g\in\mathfrak{H}_{\mathbb{C}}$ with $f,g\in\mathfrak{H}$, we write $X_{\mathbb{C}}(\mathfrak{h}):=X(f)+\i X(g), $ which satisfies $\mathrm{E}\left[X_{\mathbb{C}}\left(\mathfrak{h}\right)\overline{X_{\mathbb{C}}\left(\mathfrak{h}'\right)}\right]=\left\langle \mathfrak{h},\mathfrak{h}'\right\rangle_{\mathfrak{H}_{\mathbb{C}}}$ with $\mathfrak{h}'\in\mathfrak{H}_{\mathbb{C}}$. Let $Y=\left\{ Y(h): h\in\mathfrak{H}\right\} $ is an independent copy of the isonormal Gaussian process $X$ over $\mathfrak{H}$. Define $Y_{\mathbb{C}}(\mathfrak{h})$ same as above.
Let$Z(\mathfrak{h}):=\frac{X_{\mathbb{C}}(\mathfrak{h}) + \i Y_{\mathbb{C}}(\mathfrak{h})}{\sqrt{2}}$ for $ \mathfrak{h}\in \mathfrak{H}_{\mathbb{C}}$,
and we call $Z=\left\{ Z(\mathfrak{h}): \mathfrak{h}\in \mathfrak{H}_{\mathbb{C}}\right\} $  a complex isonormal Gaussian process over $\mathfrak{H}_{\mathbb{C}}$, which is a centered symmetric complex Gaussian family satisfying
\begin{equation}
	\mathrm{E}[Z(\mathfrak{h})^2]=0,\quad
	\mathrm{E}[Z(\mathfrak{h})\overline{Z(\mathfrak{h}')}]=\left\langle \mathfrak{h},\mathfrak{h}' \right\rangle_{\mathfrak{H}_{\mathbb{C}}}, \; \forall \mathfrak{h},\mathfrak{h}'\in \mathfrak{H}_{\mathbb{C}}.
\end{equation}

For each $p,q\geq 0$, let $\mathscr{H}_{p,q}(Z)$ be the closed linear subspace of $L^2_{\mathbb{C}}(\Omega)$ generated by the random variables $\left\{ J_{p,q}(Z(\mathfrak{h})): \mathfrak{h}\in\mathfrak{H}_{\mathbb{C}},\|\mathfrak{h}\|_{\mathfrak{H}_{\mathbb{C}}}=\sqrt2\right\} $, where $J_{p,q}(z)$ is the complex Hermite polynomial, or Hermite-Laguerre-It\^o polynomial, given by 
\begin{equation*}
	\exp\left\{ \lambda\bar{z}+\bar{\lambda}z-2|\lambda|^2\right\} =\sum_{p=0}^{\infty}\sum_{q=0}^{\infty}\frac{\bar{\lambda}^p\lambda^q}{p!q!}J_{p,q}(z),\;\lambda\in\mathbb{C}.
\end{equation*}
The space $\mathscr{H}_{p,q}(Z)$ is called the $(p, q)$-th Wiener-It\^o chaos of $Z$.

Take a complete orthonormal system $\left\{ \xi_k, k\geq1\right\} $ in $\mathfrak{H}_{\mathbb{C}}$. We denote by $\Lambda$ the set of all sequences $\textbf{a}=\left\lbrace a_k \right\rbrace_{k=1}^{\infty}$ of non-negative integers with only finitely many nonzero components. For two sequences $\textbf{p}=\left\{ p_k \right\}_{k=1}^{\infty}, \textbf{q}=\left\{ q_k \right\}_{k=1}^{\infty}\in\Lambda$, the linear mapping
\begin{equation}\label{def of complex integral}
	{\I}_{p,q}\left(\rm{symm}\left(\otimes_{k=1}^{\infty}{\xi}_k^{\otimes p_k}\right)\otimes \rm{symm}\left(\otimes_{k=1}^{\infty}\overline{\xi}_k^{\otimes q_k}\right)\right):= \prod_{k=1}^{\infty}\frac{1}{\sqrt{2^{p_k+q_k}}}J_{p_k,q_k}\left(\sqrt2Z\left({\xi}_k\right)\right),
\end{equation}
provides an isometry from the tensor product $\mathfrak{H}_{\mathbb{C}}^{\odot p}\otimes\mathfrak{H}_{\mathbb{C}}^{\odot q}$, equipped with the norm $\sqrt{p!q!}\|\cdot\|_{\mathfrak{H}_{\mathbb{C}}^{\otimes (p+q)}}$, onto the $(p,q)$-th Wiener-It\^o chaos $\mathscr{H}_{p,q}(Z)$.  Note that \eqref{def of complex integral} was proved by It\^o in \cite[Theorem 13.2]{ito1952complex}. For any $f\in\mathfrak{H}_{\mathbb{C}}^{\odot p}\otimes\mathfrak{H}_{\mathbb{C}}^{\odot q}$, $\I_{p,q}(f)$ is called complex $(p,q)$-th Wiener-It\^o integral of $f$ with respect to $Z$. Complex Wiener-It\^o chaos decomposition of $L_{\mathbb{C}}^2(\Omega,\sigma(Z),P)$ implies that $L_{\mathbb{C}}^2(\Omega,\sigma(Z),P)$ can be decomposed into the infinite orthogonal sum of the spaces $\mathscr{H}_{p,q}(Z)$. That is, any random variable $F \in L_{\mathbb{C}}^2(\Omega,\sigma(Z),P)$ admits a unique expansion of the form
\begin{equation}\label{complex chaos decomposition}
	F=\sum_{p=0}^{\infty}\sum_{q=0}^{\infty} \I_{p,q}\left(f_{p,q}\right),	
\end{equation}
where $f_{0,0}=\mathrm{E}[F]$, and $f_{p,q} \in \mathfrak{H}_{\mathbb{C}}^{\odot p}\otimes\mathfrak{H}_{\mathbb{C}}^{\odot q}$ with $p+q \geq1$, are uniquely determined by $F$.

Given $f\in\mathfrak{H}_{\mathbb{C}}^{\odot a}\otimes\mathfrak{H}_{\mathbb{C}}^{\odot b}$, $g\in\mathfrak{H}_{\mathbb{C}}^{\odot c}\otimes\mathfrak{H}_{\mathbb{C}}^{\odot d}$, for $i=0,\dots,a\land d$, $j=0,\dots,b\land c$, the $(i,j)$-th contraction of $f$ and $g$ is an element of $\mathfrak{H}_{\mathbb{C}}^{\odot (a+c-i-j)}\otimes\mathfrak{H}_{\mathbb{C}}^{\odot (b+d-i-j)}$ defined by
\begin{align}
	f \otimes_{i, j} g
	&= \sum_{l_{1}, \ldots, l_{i+j}=1}^{\infty}\left\langle f, \xi_{l_{1}} \otimes \cdots \otimes \xi_{l_{i}} \otimes \bar{\xi}_{l_{i+1}} \otimes \cdots \otimes \bar{\xi}_{l_{i+j}}\right\rangle\\ &\qquad\qquad\qquad\otimes\left\langle g, \xi_{l_{i+1}} \otimes \cdots \otimes \xi_{l_{i+j}} \otimes \bar{\xi}_{l_{1}} \otimes \cdots \otimes \bar{\xi}_{l_{i}}\right\rangle,
\end{align}
and by convention, $f \otimes_{0,0} g=f \otimes g$ denotes the tensor product of $f$ and $g$. \cite[Theorem 2.1]{Chen2017} and  \cite[Theorem A.1]{Hoshino2017} establish the product formula for complex Wiener-It\^o integrals. For $f \in \mathfrak{H}^{\odot a} \otimes\mathfrak{H}^{\odot b}$ and $ g \in \mathfrak{H}^{\odot c} \otimes \mathfrak{H}^{\odot d}$ with $a, b, c, d \geq0$,
\begin{equation}\label{complex product}
	\I_{a, b}(f) \I_{c, d}(g)=\sum_{i=0}^{a \wedge d} \sum_{j=0}^{b \wedge c}\binom{a}{i} \binom{d}{i}\binom{b}{j}\binom{c}{j} i! j! \I_{a+c-i-j, b+d-i-j}\left(f \otimes_{i, j} g\right).
\end{equation}
	
	\subsection{Malliavin calculus}
	
	Let $\mathcal{S}$ denote the class of smooth random variables of the form $F=f(X(h_1),\dots,X(h_n))$, where $h_1,\dots,h_n\in\mathfrak{H}$, $n\geq1$ and $f\in C_p^{\infty}(\mathbb{R}^n)$, the set of all infinitely continuously differentiable real-valued functions such that $f$ and all of its partial derivatives have polynomial growth. Given $F\in \mathcal{S}$, the Malliavin derivative $DF$ is a $\mathfrak{H}$-valued random element given by 
	\begin{equation*}
		DF=\sum_{i=1}^{n}\frac{\partial f}{\partial x_i}\left(X\left(h_1\right),\dots,X\left(h_n\right)\right)h_i.
	\end{equation*}
	The derivative operator $D$ is a closable and unbounded operator from $L^p(\Omega)$ to $L^p(\Omega;\mathfrak{H})$ for any $p\geq1$. By iteration, for $k\geq2$, one can define $k$-th derivative $D^kF\in L^p(\Omega;\mathfrak{H}^{\otimes k})$. For any $p\geq1$ and $k\geq0$, let $\mathbb{D}^{k,p}$ denote the closure of $\mathcal{S}$ with respect to the norm $\|\cdot\|_{k,p}$ given by 
	\begin{equation*}
		\|F\|_{k,p}^{p}=\sum_{i=0}^{k}\mathrm{E}\left(\left\|D^iF\right\|^{p}_{\mathfrak{H}^{\otimes i}}\right).
	\end{equation*} 
	For any $p\geq1$ and $k\geq0$, we set $\mathbb{D}^{\infty,p}=\bigcap_{k\geq0}\mathbb{D}^{k,p}$, $\mathbb{D}^{k,\infty}=\bigcap_{p\geq1}\mathbb{D}^{k,p}$ and $\mathbb{D}^{\infty}=\bigcap_{k\geq0}\mathbb{D}^{k,\infty}$. If $F=I_p(f)$ with $f\in\mathfrak{H}^{\odot p}$, then $I_p(f)\in\mathbb{D}^{\infty}$ and for any $k\geq0$,
	\begin{equation*}
		D^kI_p(f) =
		\begin{cases}
			\frac{p!}{(p-k)!}I_{p-k}(f),  & {k\leq p,}\\
			0,  & {k>p.}
		\end{cases}
	\end{equation*} 	
	The derivative operator $D$ satisfies the chain rule. Specifically, if $\varphi: \mathbb{R}^n \rightarrow \mathbb{R}$ is continuously differentiable with bounded partial derivatives and $F=\left(F_1, \ldots, F_n\right)$ is a vector of elements of $\mathbb{D}^{1,2}$, then $\varphi(F) \in \mathbb{D}^{1,2}$ and
	$$
	D \varphi(F)=\sum_{i=1}^n \frac{\partial \varphi}{\partial x_i}(F) D F_i .
	$$
	The chain rule still holds if $F_i \in \mathbb{D}^{\infty}$ and $\varphi$ has continuous partial derivatives with at most polynomial growth.
	
	We denote by $\delta$ the divergence operator, defined as the adjoint operator of $D$, which is an unbounded operator from a domain in $L^2(\Omega;\mathfrak{H})$ to $L^2(\Omega)$. A random element $u\in L^2(\Omega;\mathfrak{H})$ belongs to the domain of $\delta$, denoted Dom$\delta$, if and only if it verifies 
	\begin{equation*}
		\left|\mathrm{E}\left[\langle D F, u\rangle_{\mathfrak{H}}\right]\right| \leq c_{u} \sqrt{\mathrm{E}\left[F^{2}\right]},
	\end{equation*}
	for any $F \in \mathbb{D}^{1,2}$, where $c_{u}$ is a constant depending only on $u$. In particular, if $u \in \mathrm{Dom} \delta$, then $\delta(u)$ is characterized by the following duality relationship
	\begin{equation}\label{integration by parts formula}
		\mathrm{E}( F\delta(u))=\mathrm{E}\left(\langle D F, u\rangle_{\mathfrak{H})}\right),
	\end{equation}
	for any $F \in \mathbb{D}^{1,2}$. 
	
	The operator $L$ defined as $L=-\sum_{n=0}^{\infty}n J_{n}$ is the infinitesimal generator of the Ornstein-Uhlenbeck semigroup $T_{t}=\sum_{n=0}^{\infty} e^{-n t} J_{n}$. Its domain in $L^{2}(\Omega)$ is
	$$
	\mathrm{Dom} L=\left\{F \in L^{2}(\Omega): \sum_{n=1}^{\infty} n^{2}\left\|J_{n} F\right\|_{2}^{2}<\infty\right\}.
	$$
	\cite[Proposition 1.4.3]{nualart2006malliavin} states the relation between the operators $D$, $\delta$ and $L$. For $F \in L^{2}(\Omega)$, $F \in \mathrm{Dom} L$ if and only if $F \in \mathrm{Dom}(\delta D)$, and in this case
	\begin{equation}\label{delta DL}
		\delta D F=-L F.
	\end{equation}
	For any $F \in L^{2}(\Omega)$, we also define $L^{-1} F=-\sum_{n=1}^{\infty}\frac{1}{n} J_{n}(F)$. The operator $L^{-1}$ is called the pseudo-inverse of $L$. For any $F \in L^{2}(\Omega)$, we have that $L^{-1} F \in \mathrm{Dom} L$, and
	\begin{equation}\label{LL-1}
		LL^{-1}F=L^{-1}LF=F-\mathrm{E}[F].
	\end{equation}
	Combining \eqref{integration by parts formula}, \eqref{delta DL} and \eqref{LL-1}, we can get the following useful lemma.
	\begin{Lemma}{\cite[Lemma 2.1]{NOREDDINE20111008}}\label{IBP}
		Suppose that $F \in \mathbb{D}^{1,2}$ and $G \in L^2(\Omega)$. Then, $L^{-1} G \in \mathbb{D}^{2,2}$ and 
		\begin{equation}
			\mathrm{E}[F G]=\mathrm{E}[F] \mathrm{E}[G]+\mathrm{E}\left[\left\langle D F,-D L^{-1} G\right\rangle_{\mathfrak{H}}\right] .
		\end{equation}
	\end{Lemma}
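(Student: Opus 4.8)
The plan is to derive the identity entirely from the three relations recalled just above the statement: the duality relationship \eqref{integration by parts formula}, the identity $\delta D = -L$ in \eqref{delta DL}, and the factorization \eqref{LL-1}. First I would settle the regularity claim $L^{-1}G \in \mathbb{D}^{2,2}$, and then obtain the formula by a single application of the duality relation. The point to keep in mind throughout is that applying $L^{-1}$ to a chaos component $J_n G$ supplies a factor $1/n$, and it is precisely this factor that makes all the relevant Sobolev norms finite.

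For the regularity, I would start from the Wiener chaos expansion $G = \sum_{n=0}^{\infty} J_n G$, writing $J_n G = I_n(g_n)$ with $g_n \in \mathfrak{H}^{\odot n}$, so that $\sum_{n\geq 0}\|J_n G\|_2^2 = \mathrm{E}[G^2] < \infty$ because $G \in L^2(\Omega)$. By definition $L^{-1}G = -\sum_{n\geq 1}\frac{1}{n} I_n(g_n)$. Using the derivative formula $D^k I_n(g_n) = \frac{n!}{(n-k)!}\,I_{n-k}(g_n)$ for $k \leq n$ together with the isometry, one computes
\begin{equation*}
	\mathrm{E}\left[\left\|D^k I_n(g_n)\right\|_{\mathfrak{H}^{\otimes k}}^2\right] = \frac{n!}{(n-k)!}\,\mathrm{E}\left[I_n(g_n)^2\right] = \frac{n!}{(n-k)!}\,\|J_n G\|_2^2 .
\end{equation*}
Hence $\mathrm{E}[\|D^k L^{-1}G\|_{\mathfrak{H}^{\otimes k}}^2] = \sum_{n\geq \max(1,k)} \frac{1}{n^2}\,\frac{n!}{(n-k)!}\,\|J_n G\|_2^2$, and for $k = 0,1,2$ the weight $\frac{1}{n^2}\frac{n!}{(n-k)!}$ is bounded by $1$, so each of these three series is dominated by $\mathrm{E}[G^2]<\infty$. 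This shows $\|L^{-1}G\|_{2,2}<\infty$, i.e. $L^{-1}G \in \mathbb{D}^{2,2}$, as claimed.

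For the identity itself, note that \eqref{LL-1} already gives $L^{-1}G \in \mathrm{Dom}\,L$, and since \eqref{delta DL} asserts the equivalence $\mathrm{Dom}\,L = \mathrm{Dom}(\delta D)$, we obtain $D L^{-1}G \in \mathrm{Dom}\,\delta$ with $\delta D L^{-1}G = -L L^{-1}G = -(G - \mathrm{E}[G])$, using \eqref{LL-1} once more. Consequently $u := -D L^{-1}G \in \mathrm{Dom}\,\delta$ and $\delta(u) = G - \mathrm{E}[G]$. Applying the duality \eqref{integration by parts formula} to this $u$ and the given $F \in \mathbb{D}^{1,2}$ yields $\mathrm{E}[F(G-\mathrm{E}[G])] = \mathrm{E}[F\,\delta(u)] = \mathrm{E}[\langle DF,\, -D L^{-1}G\rangle_{\mathfrak{H}}]$, and since the left-hand side equals $\mathrm{E}[FG] - \mathrm{E}[F]\mathrm{E}[G]$, rearranging gives the stated formula. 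The genuinely substantive step is the regularity claim together with the verification that $-D L^{-1}G$ lies in $\mathrm{Dom}\,\delta$; once those are in place, the identity is a one-line consequence of duality. The only delicate point is that the duality relation is applied to an a priori merely $L^2(\Omega;\mathfrak{H})$-valued random element, which is justified exactly by the equivalence $\mathrm{Dom}\,L = \mathrm{Dom}(\delta D)$ underlying \eqref{delta DL}.
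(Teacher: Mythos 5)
Your proof is correct and follows exactly the route the paper indicates, namely combining the duality relation \eqref{integration by parts formula}, the identity $\delta D=-L$ from \eqref{delta DL}, and the factorization \eqref{LL-1} to write $G-\mathrm{E}[G]=\delta(-DL^{-1}G)$ and then apply duality; the paper itself only states this combination and defers to the cited reference. Your additional chaos-expansion verification that $L^{-1}G\in\mathbb{D}^{2,2}$ (with the weights $\tfrac{1}{n^{2}}\tfrac{n!}{(n-k)!}\leq 1$ for $k\leq 2$) is a correct and welcome filling-in of the regularity claim the paper takes for granted.
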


	\subsection{Cumulants}
	
	First, we recall some standard multi-index notations. A multi-index is defined as a $d$-dimensional vector $m=\left(m_1, \ldots, m_d\right)\in\mathbb{N}_0^d=\left( \mathbb{N} \cup\{0\}\right) ^d$. For ease of notations, we write $|m|=\sum_{i=1}^d m_i$, $\partial_i=\frac{\partial}{\partial x_i}$, $\partial^m=\partial_1^{m_1} \ldots \partial_d^{m_d}$ and $x^m=\prod_{i=1}^d x_i^{m_i}$. By convention, we have $0^0=1$. For any $i=1, \ldots, d$, we denote by $e_i \in \mathbb{N}_0^d$ the multi-index of order one defined by $\left(e_i\right)_j=\delta_{i j}$, where $\delta_{i j}$ the Kronecker symbol. We can write every multi-index $m$ as a sum of $|m|$ multi-indices $l_1, \ldots, l_{|m|}$ of order one, and this sum is unique up to the order of the summands. For instance, the elementary decomposition for the multi-index $(1,2,0)$ is $\{(1,0,0),(0,1,0),(0,1,0)\}$.
	
	\begin{Def}\label{Def of cumulant}
		Let $F=\left(F_1, \ldots, F_d\right)$ be a $d$-dimensional random vector satisfying $\mathrm{E}|F|^m<\infty$ for some $m \in \mathbb{N}_0^d \backslash\{0\}$. The characteristic function of $F$ is denoted by $\phi_F(t)=\mathrm{E}\left[\mathrm{e}^{\mathrm{i}\langle t, F\rangle_{\mathbb{R}^d}}\right]$ for $t \in \mathbb{R}^d$. Then the cumulant of order $m$ of $F$ is defined as
		$$
		\kappa_m(F)=\left.(-\mathrm{i})^{|m|} \partial^m \log \phi_F(t)\right|_{t=0} .
		$$
		For example, if $F_i, F_j \in L^2(\Omega)$, then $\kappa_{e_i}(F)=\mathrm{E}\left[F_i\right]$ and $\kappa_{e_i+e_j}(F)=\operatorname{Cov}\left(F_i, F_j\right)$.
	\end{Def}
	
	\begin{Def}
		Let $F=\left(F_1, \ldots, F_d\right)$ be a $d$-dimensional random vector with $F_i \in \mathbb{D}^{1,2}$ for $1\leq i\leq d$. Suppose that  $l_1, l_2, \ldots$ is a sequence taking values in $\left\{e_1, \ldots, e_d\right\}$. Set $\Gamma_{l_1}(F)=F^{l_1}$. If the random variable $\Gamma_{l_1, \ldots, l_k}(F)$ is a well-defined element of $L^2(\Omega)$ for some $k \geq 1$, we set
		$$
		\Gamma_{l_1, \ldots, l_{k+1}}(F)=\left\langle D F^{l_{k+1}},-D L^{-1} \Gamma_{l_1, \ldots, l_k}(F)\right\rangle_{\mathfrak{H}} .
		$$
	\end{Def}
    \cite[Lemma 4.3]{NOREDDINE20111008} shows that 
	if $F_i\in \mathbb{D}^{\infty}$ for $1\leq i\leq d$, then for any $k\geq1$, the random variable $\Gamma_{l_1, \ldots, l_{k}}(F)\in\mathbb{D}^{\infty}$. The following theorem tells us the relation between cumulant $\kappa_{m}(F)$ and the random variable $\Gamma_{l_1, \ldots, l_{|m|}}(F)$ with $m=l_1+ \ldots+ l_{|m|}$.
	\begin{Thm}{\cite[Theorem 4.4]{NOREDDINE20111008}}\label{cumulant and Gamma}	
		Let $m \in \mathbb{N}_0^d \backslash\{0\}$. Write $m=l_1+\cdots+l_{|m|}$ where $l_i \in\left\{e_1, \ldots, e_d\right\}$ for each $i$. Suppose that the random vector $F=\left(F_1, \ldots, F_d\right)$ is such that $F_i \in \mathbb{D}^{|m|, 2^{|m|}}$ for all $i$. Then, we have
		$$
		\kappa_m(F)=(|m|-1) ! \mathrm{E}\left[\Gamma_{l_1, \ldots, l_{|m|}}(F)\right] .
		$$
	\end{Thm}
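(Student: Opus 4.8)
The plan is to reduce everything to computing the derivatives at the origin of the cumulant generating function $K(t):=\log\phi_F(t)$, since by Definition \ref{Def of cumulant} one has $\kappa_m(F)=(-\mathrm{i})^{|m|}\partial^m K(0)$, and to feed the $\Gamma$-operators into this computation through the integration by parts formula of Lemma \ref{IBP}. Since $\phi_F(0)=1$ and $\phi_F$ is continuous, $K$ is well defined near the origin, which is all we need. For every finite sequence $l_1,\ldots,l_k\in\{e_1,\ldots,e_d\}$ I would introduce the auxiliary functions
\[
	h_{l_1,\ldots,l_k}(t):=\mathrm{E}\!\left[\Gamma_{l_1,\ldots,l_k}(F)\,\mathrm{e}^{\mathrm{i}\langle t, F\rangle_{\mathbb{R}^d}}\right],\qquad R_{l_1,\ldots,l_k}(t):=\frac{h_{l_1,\ldots,l_k}(t)}{\phi_F(t)},
\]
so that $R_{l_1,\ldots,l_k}(0)=\mathrm{E}[\Gamma_{l_1,\ldots,l_k}(F)]$ is exactly the quantity we want to reach. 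Writing $\partial_l$ for the partial derivative in the coordinate selected by $l=e_a$, we have $\partial_{e_j}\phi_F(t)=\mathrm{i}\,\mathrm{E}[F_j\mathrm{e}^{\mathrm{i}\langle t, F\rangle_{\mathbb{R}^d}}]=\mathrm{i}\,h_{e_j}(t)$, and since $\Gamma_{e_j}(F)=F_j$ this gives the starting identity $\partial_{e_j}K(t)=\mathrm{i}\,R_{e_j}(t)$.

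The engine of the proof is a single recursion. Applying Lemma \ref{IBP} to the pair $(\mathrm{e}^{\mathrm{i}\langle t, F\rangle_{\mathbb{R}^d}},\,\Gamma_{l_1,\ldots,l_k}(F))$ (splitting into real and imaginary parts, and using $\mathrm{e}^{\mathrm{i}\langle t, F\rangle_{\mathbb{R}^d}}\in\mathbb{D}^{1,2}$), together with the chain rule $D\mathrm{e}^{\mathrm{i}\langle t, F\rangle_{\mathbb{R}^d}}=\mathrm{i}\,\mathrm{e}^{\mathrm{i}\langle t, F\rangle_{\mathbb{R}^d}}\sum_{a=1}^d t_a DF_a$ and the definition of $\Gamma$, one obtains
\[
	h_{l_1,\ldots,l_k}(t)=\phi_F(t)\,\mathrm{E}[\Gamma_{l_1,\ldots,l_k}(F)]+\mathrm{i}\sum_{a=1}^{d}t_a\,h_{l_1,\ldots,l_k,e_a}(t),
\]
equivalently $R_{l_1,\ldots,l_k}(t)=\mathrm{E}[\Gamma_{l_1,\ldots,l_k}(F)]+\mathrm{i}\sum_{a=1}^{d}t_a R_{l_1,\ldots,l_k,e_a}(t)$. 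From here a short induction on $p$ yields, for any sequence and any directions $r_1,\ldots,r_p$,
\[
	\partial_{r_1}\cdots\partial_{r_p}R_{l_1,\ldots,l_k}(0)=\mathrm{i}^{\,p}\sum_{\tau}\mathrm{E}\!\left[\Gamma_{l_1,\ldots,l_k,r_{\tau(1)},\ldots,r_{\tau(p)}}(F)\right],
\]
the sum running over the orderings $\tau$ of $r_1,\ldots,r_p$. The induction is transparent: the constant term $\mathrm{E}[\Gamma_{l_1,\ldots,l_k}(F)]$ is annihilated by any derivative, while in $\mathrm{i}\sum_a t_a R_{l_1,\ldots,l_k,e_a}$ a nonzero contribution at $t=0$ forces exactly one $\partial_{r_j}$ to hit the factor $t_a$ (so $r_j=e_a$), the remaining $p-1$ derivatives acting on $R_{l_1,\ldots,l_k,e_a}$; summing over $j$ produces the sum over orderings and one extra factor $\mathrm{i}$.

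Specializing with $k=1$ and $(r_1,\ldots,r_p)=(l_2,\ldots,l_{|m|})$, and using $\partial^m K=\partial_{l_2}\cdots\partial_{l_{|m|}}(\mathrm{i}R_{l_1})$, the closed form gives $\partial^m K(0)=\mathrm{i}^{|m|}\sum_{\tau}\mathrm{E}[\Gamma_{l_1,l_{\tau(2)},\ldots,l_{\tau(|m|)}}(F)]$, the sum over the $(|m|-1)!$ orderings of $l_2,\ldots,l_{|m|}$. Since $(-\mathrm{i})^{|m|}\mathrm{i}^{|m|}=1$, the phases cancel and
\[
	\kappa_m(F)=\sum_{\tau}\mathrm{E}\!\left[\Gamma_{l_1,l_{\tau(2)},\ldots,l_{\tau(|m|)}}(F)\right].
\]
It remains to collapse this sum of $(|m|-1)!$ terms to $(|m|-1)!\,\mathrm{E}[\Gamma_{l_1,\ldots,l_{|m|}}(F)]$, which is precisely the assertion that $\mathrm{E}[\Gamma_{l_1,\ldots,l_{|m|}}(F)]$ is invariant under permutations of its arguments.

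That permutation-symmetry is the main obstacle, and it is in fact implicit in the statement, since the left-hand side $\kappa_m(F)$ cannot depend on the chosen decomposition $m=l_1+\cdots+l_{|m|}$. For $|m|\le 2$ it is immediate: Lemma \ref{IBP} gives $\mathrm{E}[\Gamma_{e_i,e_j}(F)]=\mathrm{Cov}(F_i,F_j)$, which is symmetric. One may then extend to general $|m|$ by reducing an arbitrary reordering to adjacent transpositions and applying one further integration by parts, the substance of which is carried out in \cite{NOREDDINE20111008}; alternatively, averaging the displayed identity over which index is singled out first already shows that $\sum_{\sigma}\mathrm{E}[\Gamma_{l_{\sigma(1)},\ldots,l_{\sigma(|m|)}}(F)]$ depends only on $m$, after which the per-term symmetry is the genuine point. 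The remaining ingredients are routine integrability bookkeeping: the hypothesis $F_i\in\mathbb{D}^{|m|,2^{|m|}}$ ensures, through an application of Hölder's inequality at each of the $|m|$ steps, that every $\Gamma_{l_1,\ldots,l_k}(F)$ with $k\le|m|$ is a well-defined element of $L^2(\Omega)\cap\mathbb{D}^{1,2}$, that $F$ has finite moments up to order $|m|$ so $\phi_F\in C^{|m|}$, and that all differentiations under the expectation sign above are justified by dominated convergence.
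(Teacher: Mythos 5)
You should know at the outset that the paper never proves this statement: it is imported verbatim, with citation, from \cite{NOREDDINE20111008}, so there is no internal proof to compare against and your proposal has to be judged on its own merits. Its engine is sound. Applying Lemma \ref{IBP} to the pair $(\mathrm{e}^{\mathrm{i}\langle t,F\rangle_{\mathbb{R}^d}},\Gamma_{l_1,\ldots,l_k}(F))$ together with the chain rule does yield your recursion for $R_{l_1,\ldots,l_k}$, and the induction on the number of derivatives is correct, so you have genuinely proved
\begin{equation}
	\kappa_m(F)=\sum_{\tau}\mathrm{E}\left[\Gamma_{l_1,l_{\tau(2)},\ldots,l_{\tau(|m|)}}(F)\right],
\end{equation}
the sum running over the $(|m|-1)!$ orderings of $l_2,\ldots,l_{|m|}$ (and likewise with any other element of the decomposition singled out as first); the integrability bookkeeping you postpone is indeed routine. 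This symmetrized identity is also exactly what Proposition \ref{expand} of the paper consumes, since there the $\Gamma$'s are always summed over all index tuples.

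The genuine gap is the step you defer: collapsing the sum to $(|m|-1)!\,\mathrm{E}[\Gamma_{l_1,\ldots,l_{|m|}}(F)]$, i.e.\ the permutation-invariance of $\mathrm{E}[\Gamma_{l_1,\ldots,l_{|m|}}(F)]$. This is not a technicality recoverable by ``adjacent transpositions and one further integration by parts'': it is false, even when all components lie in the same Wiener chaos. Take $\mathfrak{H}=\mathbb{R}^2$ with orthonormal basis $\{\eta_1,\eta_2\}$, put $Z_i=X(\eta_i)$, and set
\begin{equation}
	F=(F_1,F_2)=\left(Z_1^2-1,\;2Z_1Z_2\right),\qquad m=(2,2),
\end{equation}
so both components lie in $\mathcal{H}_2(X)\cap\mathbb{D}^{\infty}$. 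Then $\kappa_m(F)=\mathrm{E}[F_1^2F_2^2]-\mathrm{E}[F_1^2]\mathrm{E}[F_2^2]-2\left(\mathrm{E}[F_1F_2]\right)^2=40-8-0=32$. On the other hand, using $-DL^{-1}I_2(u)=\tfrac12DI_2(u)$ one computes directly
\begin{align}
	\Gamma_{e_1,e_1}(F)&=2Z_1^2, &\Gamma_{e_1,e_1,e_2}(F)&=4Z_1Z_2, &\Gamma_{e_1,e_1,e_2,e_2}(F)&=4Z_1^2+4Z_2^2,\\
	\Gamma_{e_1,e_2}(F)&=2Z_1Z_2, &\Gamma_{e_1,e_2,e_1}(F)&=2Z_1Z_2, &\Gamma_{e_1,e_2,e_1,e_2}(F)&=2Z_1^2+2Z_2^2,
\end{align}
and $\Gamma_{e_1,e_2,e_2,e_1}(F)=4Z_1^2$. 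Hence $\mathrm{E}[\Gamma_{e_1,e_1,e_2,e_2}(F)]=8$ while $\mathrm{E}[\Gamma_{e_1,e_2,e_1,e_2}(F)]=\mathrm{E}[\Gamma_{e_1,e_2,e_2,e_1}(F)]=4$: the expectations are not permutation-invariant, and $(|m|-1)!\,\mathrm{E}[\Gamma_{l_1,l_2,l_3,l_4}(F)]$ takes only the values $48$ and $24$ over the orderings, never $32$. (Your summed identity, by contrast, checks out: $2(8+4+4)=32$.) So the collapse cannot be carried out by any argument; what your recursion proves is in fact the correct general form of the result, and the per-ordering identity displayed in the statement holds only under extra structure — for instance $|m|=3$ with all components in the same chaos, where the relevant contraction $\langle f_i\otimes_{q/2}f_j,f_k\rangle_{\mathfrak{H}^{\otimes q}}$ is symmetric (this is the case invoked in Proposition \ref{estimation of Gamma}), or $m=4e_i$, which is the one-dimensional theorem. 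The honest fix is to stop at your symmetrized identity and record it as the theorem, noting that the statement as transcribed here and in \cite{NOREDDINE20111008} requires these caveats rather than attempting to complete the final step.
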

	If the components of random vector $F=\left(F_1, \ldots, F_d\right)$ are all multiple Wiener-It\^o integrals, namely, $F=\left(I_{q_1}\left(f_1\right), \ldots, I_{q_d}\left(f_d\right)\right)$, where each $f_i$ belongs to $\mathfrak{H}^{\odot q_i}$, the cumulant $\kappa_{m}(F)$ and the random variable $\Gamma_{l_1, \ldots, l_{|m|}}(F)$ with $m=l_1+ \ldots+ l_{|m|}$ and $l_i \in\left\{e_1, \ldots, e_d\right\}$ can be expressed more clearly, see \cite[Theorem 4.6, Equation (4.29)]{NOREDDINE20111008}. 
	Specifically, set $\lambda_k=j$ when $l_k=e_j$.  For simplicity, we drop the brackets and write $f_{\lambda_1} \tilde{\otimes}_{r_2} \cdots \tilde{\otimes}_{r_{|m|-1}} f_{\lambda_{|m|-1}}$ to implicitly assume that this quantity is defined iteratively from the left to the right. For instance, $f \tilde{\otimes}_\alpha g \tilde{\otimes}_\beta h $ actually means $\left(f \tilde{\otimes}_\alpha g\right) \tilde{\otimes}_\beta h$. Then
	\begin{align}\label{Gamma of integral}
		&\Gamma_{l_1, \ldots, l_{|m|}}(F)\\= &\,  \sum_{r_2=1}^{q_{\lambda_1} \wedge q_{\lambda_2}} \ldots \sum_{r_{|m|}=1}^{[q_{\lambda_1}+\cdots+q_{\lambda_{|m|-1}}-2 r_2-\cdots-2 r_{|m|-1}] \wedge q_{\lambda_{|m|}}} c_{q, l}\left(r_2, \ldots, r_{|m|}\right) \mathbf{1}_{\left\{r_2<\frac{q_{\lambda_1}+q_{\lambda_2}}{2}\right\}}\times \ldots  \\
		&\, \times \mathbf{1}_{\left\{r_2+\cdots+r_{|m|-1}<\frac{q_{\lambda_1}+\cdots+q_{\lambda_{|m|-1}}}{2}\right\}} I_{q_{\lambda_1}+\cdots+q_{\lambda_{|m|}}-2 r_2-\cdots-2 r_{|m|}}\left(f_{\lambda_1} \tilde{\otimes}_{r_2} f_{\lambda_2} \ldots \tilde{\otimes}_{r_{|m|}} f_{\lambda_{|m|}}\right) .
	\end{align}
	and
	\begin{equation}\label{Kappa of integral}
		\kappa_m(F)=(q_{\lambda_{|m|}}) !(|m|-1) ! \sum c_{q, l}\left(r_2, \ldots, r_{|m|-1}\right)\left\langle f_{\lambda_1} \tilde{\otimes}_{r_2} f_{\lambda_2} \cdots \tilde{\otimes}_{r_{|m|-1}} f_{\lambda_{|m|-1}}, f_{\lambda_{|m|}}\right\rangle_{\mathfrak{H}^{\otimes q_{\lambda_{|m|}}}} ,
	\end{equation}
	where the sum $\sum$ runs over all collections of integers $r_2, \ldots, r_{|m|-1}$ such that
	\begin{enumerate}[(i)]
		\item $1 \leq r_i \leq q_{\lambda_i}$ for all $i=2, \ldots,|m|-1$;
		\item $r_2+\cdots+r_{|m|-1}=\frac{q_{\lambda_1}+\cdots+q_{\lambda_{|m|-1}}-q_{\lambda_{|m|}}}{2}$;
		\item $r_2<\frac{q_{\lambda_1}+q_{\lambda_2}}{2}, \ldots, r_2+\cdots+r_{|m|-2}<\frac{q_{\lambda_1}+\cdots+q_{\lambda_{|m|-2}}}{2}$;
		\item $r_3 \leq q_{\lambda_1}+q_{\lambda_2}-2 r_2, \ldots, r_{|m|-1} \leq q_{\lambda_1}+\cdots+q_{\lambda_{|m|-2}}-2 r_2-\cdots-2 r_{|m|-2}$;
	\end{enumerate}
	and where the combinatorial constants $c_{q, l}\left(r_2, \ldots, r_s\right)$ are recursively defined by the relations
	\begin{equation}
		c_{q,l}\left(r_2\right)=q_{\lambda_2}\left(r_2-1\right) !\binom{q_{\lambda_1}-1}{r_2-1}\binom{	q_{\lambda_2}-1}{r_2-1},
	\end{equation}
	and, for $s \geq 3$,
	\begin{align}\label{c_q,l}
		c_{q, l}\left(r_2, \ldots, r_s\right)&=q_{\lambda_s}\left(r_s-1\right) !\binom{	q_{\lambda_1}+\cdots+q_{\lambda_{s-1}}-2 r_2-\cdots-2 r_{s-1}-1}{r_s-1}\\&\quad\binom{q_{\lambda_s}-1}{r_s-1} c_{q, l}\left(r_2, \ldots, r_{s-1}\right).
	\end{align} 
	In particular, if $q_1= \cdots=q_d=2$, then the only possible integers $r_2, \ldots, r_{|m|-1}$ satisfying (i)-(iv) are $r_2=\cdots=r_{|m|-2}=1$. Computing directly, one can get that $c_{q, l}(1)=2, c_{q, l}(1,1)=4, c_{q, l}(1,1,1)=8$, and so on. Therefore, for any $f_1, \ldots, f_d \in \mathfrak{H}^{\odot 2}$ and any $m \in \mathbb{N}_0^d \backslash\{0\}$ with $|m| \geq 3$, we have
	\begin{equation}\label{cumulant for second chaos}
			\kappa_m\left(I_2\left(f_1\right), \ldots, I_2\left(f_d\right)\right)=2^{|m|-1}(|m|-1) !\left\langle f_{\lambda_1} \tilde{\otimes}_1 \cdots \tilde{\otimes}_1 f_{\lambda_{|m|-1}}, f_{\lambda_{|m|}}\right\rangle_{\xi^{\otimes 2}}.
	\end{equation}

	\subsection{Multidimensional Stein's method for normal approximations}

	In this subsection, we denote by
	$\mathcal{M}_d (\mathbb{R})$ the collection of all real $d \times d$ matrices. The Hilbert-Schmidt inner product and the Hilbert-Schmidt norm on $\mathcal{M}_{d}(\mathbb{R})$, denoted respectively by $\langle\cdot, \cdot\rangle_{\mathrm{HS}}$ and $\|\cdot\|_{\mathrm{HS}}$, are defined as
	\begin{equation*}
		\langle A, B\rangle_{\mathrm{HS}}=\operatorname{Tr}\left(A B^{T}\right), \quad \|A\|_{\mathrm{HS}}=\sqrt{\langle A, A\rangle_{\mathrm{HS}}}, \quad A, B\in \mathcal{M}_d (\mathbb{R}),
	\end{equation*}
	where $\operatorname{Tr}(\cdot)$ and ${(\cdot) }^{T}$ denote the usual trace and transposition operators, respectively.
	
	Let $C=\left( C_{ij}\right) _ {1\leq i,j\leq d}\in\mathcal{M}_d(\mathbb{R})$ be a non-negative definite and symmetric matrix. We denote by $\mathcal{N}_d (0,C)$ the law of an $d$-dimensional Gaussian  vector with zero mean and covariance matrix $C$. Multidimensional Stein's lemma (see \cite[Lemma 4.1.3]{nourdin2012normal}) shows that a random vector $N=\left(N_{1}, \ldots, N_{d}\right)\sim \mathcal{N}_d (0,C)$ if and only if   
	\begin{equation}\label{Stein's lemma}
		\mathrm{E}\left[\langle N, \nabla f(N)\rangle_{\mathbb{R}^{d}}\right]=\mathrm{E}\left[\langle C, \mathrm{Hess} f(N)\rangle_{\mathrm{HS}}\right],
	\end{equation}
	for every $\mathcal{C}^{2}$ function $f: \mathbb{R}^{d} \rightarrow \mathbb{R}$ having bounded first and second derivatives, where $\mathcal{C}^{2}$ is the class of functions having continuous second derivative. Here Hess$f$ denotes the Hessian of $f$, that is, the $d \times d$ matrix of which entries are given by $(\mathrm{ Hess}f)_{ij}=\partial^{2}_{ij}f$.
	
	Suppose $F$ is a $d$-dimensional random vector such that the expectation
	$$\mathrm{E}\left[\langle F, \nabla f(F)\rangle_{\mathbb{R}^{d}}-\langle C, \mathrm{Hess} f(F)\rangle_{\mathrm{HS}}\right],$$ is close to zero for a large class of smooth functions $f$. In view of Stein's Lemma, it is possible to conclude that the law of $F$ is close to the law of $N$. In order to give a quantitative version of Stein's lemma, we introduce the definition of Stein's equation. 
	Suppose the random vector $Z \sim \mathscr{N}_{d}(0, C)$. Let $g: \mathbb{R}^{d} \rightarrow \mathbb{R}$ be such that $\mathrm{E}|g(Z)|<\infty$. The Stein's equation associated with $g$ and $Z$ is the partial differential equation
	\begin{equation}\label{Stein's equation}
		\langle C, \mathrm {Hess} f(x)\rangle_{\mathrm{HS}}-\langle x, \nabla f(x)\rangle_{\mathbb{R}^{d}}=g(x)-\mathrm{E}[g(Z)].
	\end{equation}
	A solution to the Equation \eqref{Stein's equation} is a function $f\in\mathcal{C}^{2}$ satisfying \eqref{Stein's equation} for every $x \in \mathbb{R}^{d}$.
	
	Given a Lipschitz function $g: \mathbb{R}^d \rightarrow \mathbb{R}$ with at most polynomial growth, we define $U_{g, C}: \mathbb{R}^d \rightarrow \mathbb{R}$ by
	\begin{equation}\label{solution of Stein}
		U_{g, C}(x)=\int_a^b \frac{v^{\prime}(t)}{v(t)}\left(\mathrm{E}[g(N)]-\mathrm{E}\left[g\left(v(t) x+\sqrt{1-v^2(t)} N\right)\right]\right) \mathrm{d} t,
	\end{equation}
	where $N\sim \mathscr{N}_{d}(0, C)$ is independent of $Z$, $-\infty \leq a<b \leq \infty$ and $v:(a, b) \rightarrow(0,1)$ is a diffeomorphism with $\lim _{t \rightarrow a+} v(t)=0$ and $\lim _{t \rightarrow b-} v(t)=1$. From the change of variable $v(t)=s$, we see that $U_{g, C}$ does not depend on the choice of $v$. \cite[Lemma 2.4]{Campese2013} shows that $U_{g, C}$ defined as \eqref{solution of Stein} satisfies the multidimensional Stein's equation \eqref{Stein's equation}. Moreover, if $g$ is $k$-times differentiable with bounded derivatives up to order $k$, the same is true for $U_{g, C}$. In this case, for any $m \in \mathbb{N}_0^d$ with $|m| \leq k$, the derivatives are given by
	\begin{equation}\label{Derivative of U}
		\partial^m U_{g, C}(x)=\int_a^b v^{\prime}(t) v^{|m|-1}(t) \mathrm{E}\left[\partial^m g\left(v(t) x+\sqrt{1-v^2(t)} N\right)\right] \mathrm{d} t,
	\end{equation}
	and it holds that
	\begin{equation}\label{Bound of derivative of U}
		\left|\partial^m U_{g, C}(x)\right| \leq \frac{\left\|\partial^m g\right\|_{\infty}}{|m|},
	\end{equation}
	and
	\begin{equation}\label{Expectation of derivative of U}
		\mathrm{E}\left[\partial^m U_{g, C}(Z)\right]=\frac{1}{|m|} \mathrm{E}\left[\partial^m g(Z)\right].
	\end{equation}

	\section{Optimal rate of convergence for  vector-valued Wiener-It\^o integral}\label{Section 3}

	Let $\left\{F_n= (F_{n,1},\ldots,F_{n,d}): n \geq 1\right\}$ be a sequence of random vectors of which all components live in the $q$-th Wiener chaos and $q\geq2$. Suppose that $F_n$ converges in distribution to a $d$-dimensional normal random vector $Z$. Let 
	\begin{equation}\label{optimal rate of convergence}
		M(F_n)=\max\left\lbrace \sum_{|m|=3}\left|  \kappa_{m}(F_n)\right|, \sum_{i=1}^{d}\kappa_{4e_i}(F_n)\right\rbrace.
	\end{equation}
    Note that $M(F_n)\geq \sum_{i=1}^{d}\kappa_{4e_i}(F_n)>0$ (see \cite[Lemma 5.2.4]{nourdin2012normal}) and $M(F_n)\rightarrow M(Z)=0$ under the assumption that that $F_n$ converges in distribution to $Z$.
    
	Define the distance between the distributions of two $d$-dimensional random vectors as 
	\begin{equation}\label{distance}
		\rho(F,G)=\sup\left\lbrace\left|\mathrm{E}\left[ g(F) \right] - \mathrm{E}\left[ g(G) \right]   \right|  \right\rbrace,
	\end{equation} 
	where $g:\mathbb{R}^d\rightarrow \mathbb{R}$ runs over the class of all four-times continuously differentiable functions such that $g$ and all of its derivatives of order up to four are bounded by one.

	\begin{Thm}\label{Main result1}
		Fix $q \geq 2$. Let $\left\{F_n=(F_{n,1},\ldots,F_{n,d}): n \geq 1\right\}$ be a sequence of random vectors of which components live in the $q$-th Wiener chaos. Suppose that the covariance matrix of $F_n$ is $C$ and $F_n$ converges in distribution to $Z \sim \mathscr{N}_d(0,C)$. Then there exist two finite constants $0<c_1<c_2$ only depending on $q$ and $d$ such that for $n$ large enough,
		\begin{equation}
			c_1 M\left(F_n\right) \leq \rho\left(F_n, Z\right) \leq c_2 M\left(F_n\right).
		\end{equation}
	\end{Thm}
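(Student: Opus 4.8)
The plan is to derive both inequalities from a single exact expansion of $\mathrm{E}[g(F_n)]-\mathrm{E}[g(Z)]$ obtained by feeding the multidimensional Stein equation into iterated Malliavin integration by parts. Fix an admissible $g$ (so $\|\partial^m g\|_\infty\le1$ for $|m|\le4$) and let $f=U_{g,C}$ be the solution of \eqref{Stein's equation} given by \eqref{solution of Stein}; by \eqref{Bound of derivative of U} its derivatives up to order four obey $|\partial^m f|\le 1/|m|$. Starting from $\mathrm{E}[g(F_n)]-\mathrm{E}[g(Z)]=\mathrm{E}[\langle C,\mathrm{Hess}\,f(F_n)\rangle_{\mathrm{HS}}]-\mathrm{E}[\langle F_n,\nabla f(F_n)\rangle_{\mathbb{R}^d}]$, I would apply Lemma~\ref{IBP} and the chain rule to each $\mathrm{E}[F_{n,j}\,\partial_j f(F_n)]$. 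Since $F_{n,j}$ lies in the $q$-th chaos, it is centered and $-DL^{-1}F_{n,j}=q^{-1}DF_{n,j}$, which produces $\Gamma_{e_i,e_j}(F_n)=q^{-1}\langle DF_{n,i},DF_{n,j}\rangle_{\mathfrak H}$, whose mean $\kappa_{e_i+e_j}(F_n)=C_{ij}$ cancels the Hessian term. Iterating integration by parts two more times, and each time splitting off the mean of the newly created $\Gamma$ via Theorem~\ref{cumulant and Gamma}, I arrive at
\begin{equation}
\begin{split}
\mathrm{E}[g(F_n)]-\mathrm{E}[g(Z)]={}&-\frac12\sum_{i,j,k=1}^d\kappa_{e_i+e_j+e_k}(F_n)\,\mathrm{E}[\partial_{ijk}f(F_n)]\\
&-\sum_{i,j,k,l=1}^d\mathrm{E}\big[\partial_{ijkl}f(F_n)\,\Gamma_{e_i,e_j,e_k,e_l}(F_n)\big].
\end{split}
\end{equation}
The recursion stops at order four precisely because $f$ has only four bounded derivatives, and this identity drives both bounds. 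Write $A_n=\sum_{|m|=3}|\kappa_m(F_n)|$ and $B_n=\sum_{i=1}^d\kappa_{4e_i}(F_n)$.

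For the upper bound I would estimate the two sums directly. Using $|\partial_{ijk}f|\le1/3$, the first sum is at most $\tfrac16\sum_{i,j,k}|\kappa_{e_i+e_j+e_k}(F_n)|\le c\,A_n$, since each order-three multi-index is represented by at most $3!$ ordered triples. Using $|\partial_{ijkl}f|\le1/4$, the second sum is at most $\tfrac14\sum_{i,j,k,l}\mathrm{E}|\Gamma_{e_i,e_j,e_k,e_l}(F_n)|$, and the decisive analytic step, which I would isolate as a separate proposition, is the bound $\sum_{i,j,k,l}\mathrm{E}|\Gamma_{e_i,e_j,e_k,e_l}(F_n)|\le c\,B_n$. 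Inserting the chaos formula for $\Gamma$ and for cumulants of Wiener-It\^o integrals, this reduces to controlling the $L^2$-norms of iterated contractions $f_{n,\lambda_1}\tilde\otimes_{r_2}\cdots\tilde\otimes_{r_s}f_{n,\lambda_s}$: hypercontractivity converts the $L^1$-norms of the $\Gamma$'s into such contraction norms, and Cauchy--Schwarz shows that every mixed contraction norm is dominated by the diagonal norms $\|f_{n,i}\otimes_r f_{n,i}\|$, which are comparable to $\kappa_{4e_i}(F_n)$. Combining, $\rho(F_n,Z)\le c(A_n+B_n)\le c_2 M(F_n)$.

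For the lower bound it suffices to exhibit, for each large $n$, admissible test functions witnessing $\rho(F_n,Z)\ge c\,A_n$ and $\rho(F_n,Z)\ge c\,B_n$ separately, since then $\rho(F_n,Z)\ge c_1\max(A_n,B_n)$. The mechanism is a parity selection combined with a cumulant-dual construction, and I allow $g$ to depend on $n$, which disposes of the oscillating signs of the cumulants. To witness $A_n$ I would use an odd, sufficiently smooth, bounded profile: then every fourth derivative $\partial_{ijkl}f$ is odd, so $\mathrm{E}[\partial_{ijkl}f(Z)]=0$, and the mean part of the order-four term collapses to a cumulant times the replacement error $\mathrm{E}[\partial_{ijkl}f(F_n)]-\mathrm{E}[\partial_{ijkl}f(Z)]$, which, by applying the already-proven upper bound to a normalized higher derivative of $U_{g,C}$ (legitimate because $f$ is taken smooth enough via \eqref{Derivative of U}), is $O(M(F_n))$; choosing the odd profile as a signed sum of truncated $C$-adapted Hermite polynomials dual to the third cumulants makes the surviving order-three term a positive multiple of $A_n$ up to this negligible error, whence $\rho\ge c_1A_n$. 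Symmetrically, an even profile forces $\mathrm{E}[\partial_{ijk}g(Z)]=0$, sending the order-three contribution to $O(A_nM(F_n))$, and, after splitting the order-four term into its mean part $\tfrac16\sum_{ijkl}\kappa_{e_i+e_j+e_k+e_l}(F_n)\mathrm{E}[\partial_{ijkl}f(F_n)]$ and a centered remainder, an even Hermite profile dual to the diagonal fourth cumulants yields a positive multiple of $B_n$ because $\kappa_{4e_i}\ge0$ and the off-diagonal fourth cumulants are dominated by $B_n$; this gives $\rho\ge c_1B_n$. In both cases the remaining centered $\Gamma$-fluctuation $\mathrm{E}|\Gamma_{e_i,e_j,e_k,e_l}(F_n)-\mathrm{E}\Gamma_{e_i,e_j,e_k,e_l}(F_n)|$ is controlled by the same higher contraction norms and is shown to be of strictly smaller order than the leading term.

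The hard part is the $\Gamma$-estimate $\sum_{i,j,k,l}\mathrm{E}|\Gamma_{e_i,e_j,e_k,e_l}(F_n)|\le c\,B_n$ together with its refinement to the centered fluctuations: one must show that inside a single fixed chaos all mixed contraction norms, and with them all off-diagonal fourth cumulants and the fluctuations of the $\Gamma$'s, are quantitatively dominated by the diagonal fourth cumulants $\kappa_{4e_i}(F_n)$, uniformly in $n$. This is exactly where the hypothesis that every component lives in the same $q$-th Wiener chaos is essential. The secondary difficulty, on the lower-bound side, is the explicit construction of the bounded dual test functions so that truncating the Hermite polynomials perturbs the leading Edgeworth coefficient only negligibly and the remainder stays of strictly smaller order even when $A_n$ and $B_n$ are comparable; here the parity trick is indispensable, since it reduces every wrong-order contribution to a product of a cumulant with the $O(M(F_n))$ replacement error and hence to a quantity that is quadratically small relative to the scale $M(F_n)$.
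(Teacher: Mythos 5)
Your upper bound is essentially the paper's own argument: it is the $M=4$ case of the expansion in Proposition \ref{expand} combined with the estimate \eqref{Gamma4} of Proposition \ref{estimation of Gamma}, and that part is sound. The genuine gap is in the lower bound, at the step where you stop the integration by parts at order four and assert that the centered fluctuation $\mathrm{E}\bigl|\Gamma_{e_i,e_j,e_k,e_l}(F_n)-\mathrm{E}[\Gamma_{e_i,e_j,e_k,e_l}(F_n)]\bigr|$ is of strictly smaller order than the leading term. Inside a fixed chaos this is false: the centered part of a fourth-order $\Gamma$ is controlled only by the same contraction norms that produce $\kappa_{4e_i}(F_n)$ itself, with no gain in the exponent. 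Concretely, take $d=1$, $q=2$, $F_n=I_2(f_n)$ where $f_n$ has eigenvalues $\pm\epsilon_n$ (once each) and $\pm\epsilon_n^4$ (each repeated about $\tfrac14\epsilon_n^{-8}$ times), normalized to unit variance, with $\epsilon_n\rightarrow0$. All odd cumulants vanish (odd power sums of the eigenvalues are zero), so $A_n=0$, while $M(F_n)=\kappa_{4}(F_n)\asymp\epsilon_n^4$ and $F_n$ converges in law to $N(0,1)$ by the Fourth Moment Theorem. However, by \eqref{Gamma of integral}, $\Gamma_{e_1,e_1,e_1,e_1}(F_n)-\mathrm{E}[\Gamma_{e_1,e_1,e_1,e_1}(F_n)]=8\,I_2\bigl(f_n\tilde{\otimes}_1f_n\tilde{\otimes}_1f_n\tilde{\otimes}_1f_n\bigr)$, whose kernel has two dominant eigenvalues equal to $\epsilon_n^4$, so its $L^1$-norm is $\asymp\epsilon_n^4\asymp M(F_n)$. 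Thus your remainder is of exactly the same order as the $B_n$-term you are trying to exhibit, and the $B_n$-witness (here the only relevant one, since $A_n=0$) does not go through. This matches the exponent pattern $3/4,\,1,\,5/4$ in Proposition \ref{estimation of Gamma}: a gain below the order of $\kappa_4$ occurs for the centered third-order $\Gamma$ and for the fifth-order $\Gamma$, but not for the centered fourth-order one.

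The repair is what the paper does: since the lower-bound test functions are of your own choosing, take them smooth with bounded derivatives of all orders (the paper's trigonometric functions of Lemma \ref{specific test function}, or your Hermite-type profiles), so that $U_{g,C}$ has bounded fifth derivatives, and take $M=5$ in Proposition \ref{expand}. The troublesome centered term then becomes, after one more integration by parts, a sum of terms $\mathrm{E}[\partial^{m}U_{g,C}(F_n)\,\Gamma_{e_{j_1},\ldots,e_{j_5}}(F_n)]$ with $|m|=5$, which \eqref{Gamma5} bounds by $c_3(q)\max_i\kappa_{4e_i}(F_n)^{5/4}=o(M(F_n))$. What makes the centered fourth-order contribution negligible is its decorrelation from $\partial^4U_{g,C}(F_n)$, which is visible only through this extra integration by parts and not through any $L^1$ bound on the fluctuation alone. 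With this correction your parity/duality scheme becomes a workable variant of the paper's lower bound; two further small points: the paper replaces your quantitative $O(M(F_n))$ replacement errors by the qualitative convergence $\mathrm{E}[\partial^mU_{g,C}(F_n)]\rightarrow\mathrm{E}[\partial^mU_{g,C}(Z)]$, which suffices, and the off-diagonal fourth cumulants should be eliminated by the duality property $\mathrm{E}[\partial^mU_{g,C}(Z)]=0$ (as in Lemma \ref{specific test function}) rather than by a sign-blind domination argument, since they can be negative and of the same order as $B_n$.
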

    \begin{Rem}
    	From the proof of Theorem \ref{Main result1}, one can get that the upper bound, namely $\rho\left(F_n, Z\right) \leq c_2 M\left(F_n\right)$, is still hold without the assumption that $F_n$ converges in distribution to $Z \sim \mathscr{N}_d(0,C)$.
    \end{Rem}
		\begin{Rem}
		There are two reasons why we consider $M(F_n)$ as the optimal rate of convergence and require the smoothness of test function $g$ in \eqref{distance} to be of order 4. Firstly, combining Proposition \ref{expand} below and Stein's method, the test function $g$ should be at least continuously differentiable up to order three. However, if we take $M=3$ in Proposition \ref{expand}, the remainder term $$\sum_{m=e_{i}+e_{j}+e_{k}, 1\leq i,j,k\leq d} \mathrm{E}\left[\Gamma_{e_{i},e_{j}, e_{k}}(F) \partial^{m}f(F)\right]$$
		is bounded by $\max\left\lbrace \sum_{|m|=3}\left|  \kappa_{m}(F_n)\right|, \sum_{i=1}^{d}\kappa_{4e_i}(F_n)^{\frac{3}{4}}\right\rbrace$ according to Equation \eqref{Gamma3}. The convergence rate of this bound is slower than
		$M(F_n)$, the upper bound we get in Theorem \ref{Main result1} by taking $M=4$ in Proposition \ref{expand}. Secondly, if $M\geq 5$, the reminder term $$\sum_{s=3}^{M-1}\sum_{m=e_{j_1}+\dots+e_{j_s}, \atop 1\leq j_k\leq d, 1\leq k\leq s} \frac{\kappa_{m}(F)}{(s-1)!} \mathrm{E}\left[\partial^{m}f(F)\right]+\sum_{m=e_{j_1}+\dots+e_{j_M},\atop 1\leq j_k\leq d, 1\leq k\leq M} \mathrm{E}\left[\Gamma_{e_{j_1},\ldots, e_{j_M}}(F) \partial^{m}f(F)\right]$$ is still bounded by $M(F_n)$. For example, taking $M=5$, the reminder term
		\begin{align}
			&\sum_{m=e_{i}+e_{j}+e_{k},\atop 1\leq i,j,k\leq d} \frac{\kappa_{m}(F)}{2} \mathrm{E}\left[\partial^{m}f(F)\right]
			+ \sum_{m=e_{i}+e_{j}+e_{k}+e_{l}, \atop1\leq i,j,k,l\leq d} \frac{\kappa_{m}(F)}{3!} \mathrm{E}\left[\partial^{m}f(F)\right]\\+&\,
			\sum_{m=e_{j_1}+\dots+e_{j_5},\atop 1\leq j_k\leq d} \mathrm{E}\left[\Gamma_{e_{j_1},\ldots, e_{j_5}}(F) \partial^{m}f(F)\right]
		\end{align}  
		is bounded by $$\max\left\lbrace \sum_{|m|=3}\left|  \kappa_{m}(F_n)\right|, \sum_{i=1}^{d}\kappa_{4e_i}(F_n), \sum_{i=1}^{d}\kappa_{4e_i}(F_n)^{\frac{5}{4}}\right\rbrace=M(F_n)$$ according to Proposition \ref{estimation of Gamma}. The above two points are the reasons why we define the optimal rate of convergence as \eqref{optimal rate of convergence} and the distance as \eqref{distance}.
	\end{Rem}

In Theorem \ref{Main result1}, we consider the sequences of vector-valued Wiener-It\^o integrals $\left\lbrace F_n:n\geq1 \right\rbrace $ with deterministic covariance matrix $C$. Actually, the conclusion can be extended to the case that the covariance matrix of $F_n$, denoted by $C_n$, converges to $C$ in the meaning of $\|C_n-C\|_{\mathrm{HS}}\rightarrow 0$ as $n\rightarrow\infty$. We introduce the definition of asymptotically close to normal. We say that $\left\lbrace F_n:n\geq1 \right\rbrace $ is asymptotically close to normal if $\rho(F_n,Z_n)\rightarrow0$, where $Z_n$ is a $d$-dimensional Gaussian random vector of which covariance matrix coincides with that of $F_n$. The definition of asymptotically close to normal was introduced in \cite[Definition 2.3]{Campese2013} by Campese with respect to the Prokhorov distance $\beta$, which is equivalent to convergence in law  in the meaning of that $\beta(F_n,Z)\rightarrow0\Leftrightarrow F_n\overset{d}{\rightarrow}Z$, as $n\rightarrow\infty$. Here, we adopt distance $\rho(\cdot,\cdot)$ (see Definition \ref{distance}) which is also  equivalent to convergence in law. Note that if the test function $g$ in the definition of the distance $\rho(\cdot,\cdot)$ is not necessarily bounded, then the topology induced by $\rho(\cdot,\cdot)$ is stronger than the topology of the convergence in distribution. Using the similar argument as in the proof of Theorem \ref{Main result1}, we can obtain the following proposition.
\begin{Prop}
	Fix $q \geq 2$. Let $\left\{F_n=(F_{n,1},\ldots,F_{n,d}): n \geq 1\right\}$ be a sequence of random vectors of which components live in the $q$-th Wiener chaos. Suppose that $C_n$, the covariance matrix of $F_n$, converges to $C$ in the meaning of $\|C_n-C\|_{\mathrm{HS}}\rightarrow 0$ as $n\rightarrow\infty$. 
	\begin{enumerate}[(i)]
		\item If $C$ is invertible, we set $F_n^{'}=C^{\frac{1}{2}}C_n^{-\frac{1}{2}}F_n$ and assume that $F_n^{'}$ converges in distribution to $Z \sim \mathscr{N}_d(0,C)$. Then for $n$ large enough,
		\begin{equation}
			\rho(F_n^{'}, Z)\asymp M(F_n^{'}).
		\end{equation}
	\item If $C$ is not invertible, suppose that $\left\lbrace F_n:n\geq1 \right\rbrace $ is asymptotically close to normal. That is, $\rho(F_n,Z_n)\rightarrow0$, where $Z_n$ is a $d$-dimensional Gaussian random vector of which covariance matrix coincides with that of $F_n$. Then for $n$ large enough,
		\begin{equation}
		    \rho(F_n,Z_n)\asymp M(F_n).
	    \end{equation}
	\end{enumerate}
\end{Prop}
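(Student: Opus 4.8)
The plan is to reduce both cases to the machinery already established for Theorem \ref{Main result1}, the only new feature being that the relevant covariance is no longer exactly $C$ but merely converges to it.

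For part (i) I would first observe that the rescaling is harmless at the level of chaos membership: since $F_n^{'}=C^{\frac{1}{2}}C_n^{-\frac{1}{2}}F_n$ is, for each fixed $n$, a deterministic linear combination of the components $F_{n,1},\ldots,F_{n,d}$, and each of these lives in the $q$-th Wiener chaos, every component of $F_n^{'}$ again lives in the $q$-th Wiener chaos. Because $\|C_n-C\|_{\mathrm{HS}}\to 0$ and $C$ is invertible, $C_n$ is positive definite for $n$ large enough, so $C_n^{-\frac{1}{2}}$, and hence $F_n^{'}$, is well defined. A direct computation, using that $C^{\frac{1}{2}}$ and $C_n^{-\frac{1}{2}}$ are symmetric, gives $\mathrm{Cov}(F_n^{'})=C^{\frac{1}{2}}C_n^{-\frac{1}{2}}C_n C_n^{-\frac{1}{2}}C^{\frac{1}{2}}=C$, so $F_n^{'}$ has covariance matrix exactly $C$. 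Together with the standing assumption $F_n^{'}\overset{d}{\rightarrow}Z\sim\mathscr{N}_d(0,C)$, the hypotheses of Theorem \ref{Main result1} are met by $\{F_n^{'}:n\geq1\}$, and applying that theorem directly yields $\rho(F_n^{'},Z)\asymp M(F_n^{'})$. Thus (i) is essentially a corollary; the only things needing careful verification are the covariance identity and the well-definedness of $C_n^{-\frac{1}{2}}$ for large $n$.

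For part (ii) the limit covariance $C$ is singular, so the rescaling trick is unavailable and $C_n^{-\frac{1}{2}}$ may blow up. Instead I would re-run the proof of Theorem \ref{Main result1} with the fixed Gaussian target $Z$ replaced throughout by the moving target $Z_n\sim\mathscr{N}_d(0,C_n)$, whose covariance matches that of $F_n$ exactly. Concretely, for the upper bound I would use the Stein solution $U_{g,C_n}$ of the Stein equation \eqref{Stein's equation} associated with $Z_n$, feed it into the cumulant expansion of Proposition \ref{expand}, and estimate the remainder via Proposition \ref{estimation of Gamma}; for the lower bound I would build the specific test functions of Lemma \ref{specific test function} adapted to $C_n$. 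The second-order terms cancel precisely because $\mathrm{Cov}(F_n)=C_n=\mathrm{Cov}(Z_n)$, exactly as in the fixed-covariance argument, and the assumption that $\{F_n\}$ is asymptotically close to normal guarantees that $\rho(F_n,Z_n)\to 0$ so that the asymptotic regime of Theorem \ref{Main result1} is in force.

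The one point requiring genuine care, and what I expect to be the main obstacle, is the uniformity of the constants as $C_n$ varies with $n$. In Theorem \ref{Main result1} the constants $c_1,c_2$ depend on $q$, $d$, and implicitly on the covariance through the Gamma estimates of Proposition \ref{estimation of Gamma} and the construction in Lemma \ref{specific test function}; here they would depend on $C_n$. I would resolve this by noting that $\|C_n-C\|_{\mathrm{HS}}\to 0$ confines the entries of $C_n$, which are exactly the second cumulants $\kappa_{e_i+e_j}(F_n)$, to a fixed compact neighborhood of the entries of $C$ for $n$ large, so all these constants may be chosen uniformly in $n$. A convenient feature that helps is that the bound \eqref{Bound of derivative of U} on the derivatives of the Stein solution is independent of the covariance matrix, so the covariance enters only through the (uniformly controlled) Gamma estimates. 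Combining the resulting uniform upper and lower bounds then gives $\rho(F_n,Z_n)\asymp M(F_n)$, completing (ii).
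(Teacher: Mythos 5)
Your proposal is correct and takes essentially the same route as the paper, which gives no detailed proof of this proposition but only the remark that it follows ``using the similar argument as in the proof of Theorem \ref{Main result1}''. Your treatment of part (i) (checking that $F_n^{'}$ has components in the $q$-th Wiener chaos and covariance exactly $C$, then invoking Theorem \ref{Main result1} directly) and of part (ii) (re-running the Stein--cumulant argument with the moving target $Z_n\sim\mathscr{N}_d(0,C_n)$, using that the derivative bounds \eqref{Bound of derivative of U} are covariance-free, the estimates of Proposition \ref{estimation of Gamma} depend only on $q$, and the constants in Lemma \ref{specific test function} are uniformly controlled since $\|C_n-C\|_{\mathrm{HS}}\to0$) is precisely the argument the paper intends.
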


	To prove Theorem \ref{Main result1}, we need several results as follows.
	\begin{Prop}\label{expand}
		Let $F=(F_1,\dots, F_d)$ with $F_i\in\mathbb{D}^{\infty}$, $1\leq i\leq d$. Then, for every $M \geq 2$ and every function $f: \mathbb{R}^d \rightarrow \mathbb{R}$ that is $M$-times continuously differentiable with derivatives having at most polynomial growth, we have
		\begin{align}\label{expand equation}
			\mathrm{E}[\left\langle F, \nabla f(F)\right\rangle _{\mathbb{R}^d}]&=\sum_{s=1}^{M-1}\sum_{m=e_{j_1}+\dots+e_{j_s}, \atop 1\leq j_k\leq d, 1\leq k\leq s} \frac{\kappa_{m}(F)}{(s-1)!} \mathrm{E}\left[\partial^{m}f(F)\right]\\&\quad+\sum_{m=e_{j_1}+\dots+e_{j_M},\atop 1\leq j_k\leq d, 1\leq k\leq M} \mathrm{E}\left[\Gamma_{e_{j_1},\ldots, e_{j_M}}(F) \partial^{m}f(F)\right].
		\end{align}
	\end{Prop}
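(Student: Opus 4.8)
The plan is to iterate the integration-by-parts formula of Lemma \ref{IBP}, using the chain rule for $D$ together with the cumulant--$\Gamma$ relation of Theorem \ref{cumulant and Gamma} to peel off one cumulant term at each stage. First I would rewrite the left-hand side as
\[
\mathrm{E}[\langle F, \nabla f(F)\rangle_{\mathbb{R}^d}] = \sum_{i=1}^d \mathrm{E}[F_i\, \partial_i f(F)] = \sum_{i=1}^d \mathrm{E}[\partial^{e_i} f(F)\, \Gamma_{e_i}(F)],
\]
since $\Gamma_{e_i}(F)=F_i$ by definition. The guiding observation is that every term produced along the way has the shape $\mathrm{E}[\partial^{l_1+\cdots+l_k} f(F)\, \Gamma_{l_1,\dots,l_k}(F)]$, i.e.\ the order of the derivative of $f$ always equals the multi-index $m=l_1+\cdots+l_k$ attached to the $\Gamma$-factor. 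This \emph{invariant} is what makes the bookkeeping close up.

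The core is a one-step recursion. Fixing a term $\mathrm{E}[\partial^m f(F)\, \Gamma_{l_1,\dots,l_k}(F)]$ with $m=l_1+\cdots+l_k$, I would apply Lemma \ref{IBP} with $\partial^m f(F)$ playing the role of the $\mathbb{D}^{1,2}$-argument and $\Gamma_{l_1,\dots,l_k}(F)$ that of the $L^2$-argument, obtaining
\[
\mathrm{E}[\partial^m f(F)\, \Gamma_{l_1,\dots,l_k}(F)] = \mathrm{E}[\partial^m f(F)]\,\mathrm{E}[\Gamma_{l_1,\dots,l_k}(F)] + \mathrm{E}\big[\langle D(\partial^m f(F)),\, -DL^{-1}\Gamma_{l_1,\dots,l_k}(F)\rangle_{\mathfrak{H}}\big].
\]
By Theorem \ref{cumulant and Gamma} the first term equals $\tfrac{\kappa_m(F)}{(k-1)!}\,\mathrm{E}[\partial^m f(F)]$, and by the chain rule $D(\partial^m f(F))=\sum_{j=1}^d \partial^{m+e_j} f(F)\, DF_j$, so the second term becomes $\sum_{j=1}^d \mathrm{E}[\partial^{m+e_j} f(F)\, \Gamma_{l_1,\dots,l_k,e_j}(F)]$ once one recognizes the definition of $\Gamma_{l_1,\dots,l_k,e_j}(F)$. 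This reproduces the same shape with $k$ replaced by $k+1$.

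I would then telescope. Writing $T_k=\sum_{l_1,\dots,l_k}\mathrm{E}[\partial^{l_1+\cdots+l_k} f(F)\, \Gamma_{l_1,\dots,l_k}(F)]$ (sum over all ordered tuples in $\{e_1,\dots,e_d\}$), the left-hand side is $T_1$, and summing the recursion over all such tuples gives $T_k=\sum_{m=e_{j_1}+\cdots+e_{j_k}}\tfrac{\kappa_m(F)}{(k-1)!}\mathrm{E}[\partial^m f(F)]+T_{k+1}$, where the cumulant sum is exactly the $s=k$ slice of the first sum in the statement (the ordered-tuple indexing $m=e_{j_1}+\cdots+e_{j_k}$ matches verbatim). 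Iterating from $k=1$ to $k=M-1$ leaves $T_M$, which is precisely the remainder term, yielding \eqref{expand equation}.

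The hard part will not be the algebra but the analytic justification that Lemma \ref{IBP} applies at every stage: I would need $\partial^m f(F)\in\mathbb{D}^{1,2}$ for all $|m|\le M-1$ and $\Gamma_{l_1,\dots,l_k}(F)\in L^2(\Omega)$. Both follow from $F_i\in\mathbb{D}^{\infty}$, which furnishes finite moments of all orders as well as $\Gamma_{l_1,\dots,l_k}(F)\in\mathbb{D}^{\infty}$ by \cite[Lemma 4.3]{NOREDDINE20111008}, combined with the at-most-polynomial growth of the derivatives of $f$ so that the polynomial-growth form of the chain rule applies and all expectations are finite. Tracking smoothness shows the recursion differentiates $\partial^m f$ only for $|m|\le M-1$, producing derivatives up to order $M$; hence the hypothesis that $f$ be $M$-times continuously differentiable with derivatives of at most polynomial growth is exactly what is consumed.
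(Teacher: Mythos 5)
Your proposal is correct and follows essentially the same route as the paper: the paper's proof likewise iterates Lemma \ref{IBP} together with the chain rule to produce $\Gamma_{e_{j_1},\ldots,e_{j_{k+1}}}(F)$ from $\Gamma_{e_{j_1},\ldots,e_{j_k}}(F)$, converts the expectation factors $\mathrm{E}[\Gamma_{e_{j_1},\ldots,e_{j_k}}(F)]$ into $\kappa_m(F)/(k-1)!$ via Theorem \ref{cumulant and Gamma}, and then sums over the starting index $j_1$. Your recursion-with-invariant packaging (the $T_k$ telescope) is just a cleaner bookkeeping of the paper's chain of equalities, and your attention to the $\mathbb{D}^{\infty}$/polynomial-growth hypotheses justifying each application of Lemma \ref{IBP} is a point the paper leaves implicit.
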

	\begin{Rem}
		For $d=1$, the expansion of this type in Proposition \ref{expand} can be found in \cite[Proposition 3.11]{Hermine2012}. Proposition \ref{expand} can be seen as an extension of \cite[Proposition 3.11]{Hermine2012} to the multidimensional case. For $d\geq2$, Equation \eqref{expand equation} is new as far as we know.
	\end{Rem}
	\begin{proof}
		Using Lemma \ref{IBP} and Theorem \ref{cumulant and Gamma} repeatedly,
		\begin{align}
			\mathrm{E}\left[F_{j_1}\partial_{j_1}f(F) \right] 
			&=\mathrm{E}\left[F_{j_1}\right] \mathrm{E}\left[\partial_{j_1}f(F) \right]+\mathrm{E}\left[\left\langle D\partial_{j_1}f(F),-DL^{-1}F_{j_1} \right\rangle _{\mathfrak{H}}\right] \\
			&=\kappa_{e_{j_1}}(F) \mathrm{E}\left[\partial_{j_1}f(F) \right]+\sum_{j_2=1}^{d}\mathrm{E}\left[
			\partial^{2}_{j_1,j_2}f(F)\left\langle DF_{j_2},-DL^{-1}F_{j_1} \right\rangle _{\mathfrak{H}} \right] \\
			&=\kappa_{e_{j_1}}(F) \mathrm{E}\left[\partial_{j_1}f(F) \right]+\sum_{j_2=1}^{d}\mathrm{E}\left[
			\partial^{2}_{j_1,j_2}f(F)\right] \mathrm{E}\left[\Gamma_{e_{j_1},e_{j_2}}(F) \right]\\&\quad+\sum_{j_2,j_3=1}^{d}\mathrm{E}\left[
			\partial^{3}_{j_1,j_2,j_3}f(F) \Gamma_{e_{j_1},e_{j_2},e_{j_3}}(F)\right] \\
			&=\kappa_{e_{j_1}}(F) \mathrm{E}\left[\partial_{j_1}f(F) \right]+\sum_{j_2=1}^{d}\kappa_{e_{j_1}+e_{j_2}}(F)\mathrm{E}\left[
			\partial^{2}_{j_1,j_2}f(F)\right] \\&\quad+\sum_{j_2,j_3=1}^{d}\mathrm{E}\left[
			\partial^{3}_{j_1,j_2,j_3}f(F) \Gamma_{e_{j_1},e_{j_2},e_{j_3}}(F)\right] \\
			&=\cdots\\
			&=\kappa_{e_{j_1}}(F) \mathrm{E}\left[\partial_{j_1}f(F) \right]+\sum_{j_2=1}^{d}\kappa_{e_{j_1}+e_{j_2}}(F)\mathrm{E}\left[
			\partial^{2}_{j_1,j_2}f(F)\right]+\cdots\\&\quad+\sum_{j_2,\ldots,j_{M-1}=1}^{d}\frac{\kappa_{e_{j_1}+\cdots+e_{j_{M-1}}}(F)}{(M-2)!}\mathrm{E}\left[
			\partial^{M-1}_{j_1,\ldots,j_{M-1}}f(F)\right]\\&\quad+\sum_{j_2,\ldots,j_M=1}^{d}\mathrm{E}\left[
			\partial^{M}_{j_1,\ldots,j_M}f(F) \Gamma_{e_{j_1},\ldots,e_{j_M}}(F)\right].
		\end{align}
		Therefore,
		\begin{align}
			\mathrm{E}[\left\langle F, \nabla f(F)\right\rangle _{\mathbb{R}^d}]&=\sum_{j_1=1}^{d}\mathrm{E}\left[F_{j_1}\partial_{j_1}f(F) \right]\\
			&=\sum_{j_1=1}^{d}\frac{\kappa_{e_{j_1}}(F)}{(1-1)!} \mathrm{E}\left[\partial_{j_1}f(F) \right]+\sum_{j_1,j_2=1}^{d}\frac{\kappa_{e_{j_1}+e_{j_2}}(F)}{(2-1)!}\mathrm{E}\left[
			\partial^{2}_{j_1,j_2}f(F)\right]+\cdots\\&\quad+\sum_{j_1,\ldots,j_{M-1}=1}^{d}\frac{\kappa_{e_{j_1}+\cdots+e_{j_{M-1}}}(F)}{(M-2)!}\mathrm{E}\left[
			\partial^{M-1}_{j_1,\ldots,j_{M-1}}f(F)\right]\\&\quad+\sum_{j_1,\ldots,j_M=1}^{d}\mathrm{E}\left[
			\partial^{M}_{j_1,\ldots,j_M}f(F) \Gamma_{e_{j_1},\ldots,e_{j_M}}(F)\right]
			\\&=\sum_{s=1}^{M-1}\sum_{m=e_{j_1}+\dots+e_{j_s}, \atop 1\leq j_k\leq d, 1\leq k\leq s} \frac{\kappa_{m}(F)}{(s-1)!} \mathrm{E}\left[\partial^{m}f(F)\right]\\&\quad+\sum_{m=e_{j_1}+\dots+e_{j_M},\atop 1\leq j_k\leq d, 1\leq k\leq M} \mathrm{E}\left[\Gamma_{e_{j_1},\ldots, e_{j_M}}(F) \partial^{m}f(F)\right].
		\end{align}
	\end{proof}
	
	\begin{Prop}\label{estimation of Gamma}
		For each integer $q \geq 2$, there exist positive constants $c_1(q), c_2(q), c_3(q)$ only depending on $q$ such that, for all $F=(I_q(f_1),\ldots,I_q(f_d))$ with $f_i \in \mathfrak{H}^{\odot q}$ and $1\leq i\leq d$, we have
		\begin{align}
			\mathrm{E}\left[\left|\Gamma_{e_i,e_j,e_k}(F)-\frac{1}{2} \kappa_{e_i+e_j+e_k}(F)\right|\right] & \leq c_1(q)\max_{1 \leq i \leq d}\left\lbrace  \kappa_{4e_i}(F)^{\frac{3}{4}}\right\rbrace,  \label{Gamma3}\\
			\mathrm{E}\left[\left|\Gamma_{e_i,e_j,e_k,e_l}(F)\right|\right] & \leq c_2(q) \max_{1 \leq i \leq d}\left\lbrace  \kappa_{4e_i}(F) \right\rbrace,\label{Gamma4} \\
			\mathrm{E}\left[\left|\Gamma_{e_i,e_j,e_k,e_l,e_s}(F)\right|\right] & \leq c_3(q) \max_{1 \leq i \leq d}\left\lbrace  \kappa_{4e_i}(F)^{\frac{5}{4}}\right\rbrace  \label{Gamma5},
		\end{align}
		for any $1\leq i,j,k,l,s \leq d$.
	\end{Prop}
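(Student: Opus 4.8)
\noindent\emph{Proof strategy.} The plan is to feed the explicit chaos expansion \eqref{Gamma of integral} into each $\Gamma$-random variable and then split it into its mean and its genuinely random part. By \eqref{Gamma of integral}, for $F=(I_q(f_1),\dots,I_q(f_d))$ the variable $\Gamma_{e_{j_1},\dots,e_{j_k}}(F)$ is a finite sum of multiple integrals $I_N\big(f_{\lambda_1}\tilde{\otimes}_{r_2}\cdots\tilde{\otimes}_{r_k}f_{\lambda_k}\big)$ of orders $N=kq-2r_2-\cdots-2r_k\ge 0$, and the single order-zero term equals its expectation, which by Theorem \ref{cumulant and Gamma} is $\tfrac{1}{(k-1)!}\kappa_{e_{j_1}+\cdots+e_{j_k}}(F)$. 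Hence $\mathrm{E}\big[\big|\Gamma_{e_{j_1},\dots,e_{j_k}}(F)\big|\big]\le \tfrac{1}{(k-1)!}\big|\kappa_{e_{j_1}+\cdots+e_{j_k}}(F)\big|+\sum_{N\ge 1}\sqrt{N!}\,\big\|f_{\lambda_1}\tilde{\otimes}_{r_2}\cdots\tilde{\otimes}_{r_k}f_{\lambda_k}\big\|$, using $\mathrm{E}|I_N(h)|\le\sqrt{N!}\,\|\mathrm{symm}(h)\|$. For \eqref{Gamma3} the mean $\tfrac12\kappa_{e_i+e_j+e_k}(F)$ is subtracted exactly, so only the positive-order kernels remain; for \eqref{Gamma4} and \eqref{Gamma5} both the mixed cumulant and the positive-order kernels must be estimated. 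Thus everything reduces to two tasks: bounding the mixed cumulants $\kappa_{e_{j_1}+\cdots+e_{j_k}}(F)$ (via \eqref{Kappa of integral}), and bounding the norms of the iterated symmetrized contractions.

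\noindent The toolkit I would use for both tasks is standard contraction calculus. First, symmetrization is norm non-increasing, so I may drop the tildes after passing to norms. Second, the swap identity $\|f\otimes_s g\|_{\mathfrak{H}^{\otimes(p+q-2s)}}^2=\big\langle f\otimes_{p-s}f,\,g\otimes_{q-s}g\big\rangle_{\mathfrak{H}^{\otimes 2s}}$ for $f\in\mathfrak{H}^{\odot p},g\in\mathfrak{H}^{\odot q}$, together with Cauchy--Schwarz, yields the polarization bound $\|f_i\otimes_s f_j\|\le\|f_i\otimes_{q-s}f_i\|^{1/2}\|f_j\otimes_{q-s}f_j\|^{1/2}$, which reduces all mixed (multi-index) contractions to pure self-contractions. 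Third, by \cite[Lemma 5.2.4]{nourdin2012normal} one has $\sum_{r=1}^{q-1}\|f_i\otimes_r f_i\|^2\asymp\kappa_{4e_i}(F)$ with constants depending only on $q$, so every \emph{genuine} self-contraction satisfies $\|f_i\otimes_r f_i\|\lesssim\big(\max_{1\le l\le d}\kappa_{4e_l}(F)\big)^{1/2}$ for $1\le r\le q-1$. Note that all three target bounds are homogeneous of the correct degree ($3,4,5$ in $f$ against $\kappa_4^{3/4},\kappa_4,\kappa_4^{5/4}$), so no variance normalization is needed and the constants depend on $q$ alone.

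\noindent For the positive-order (fluctuation) kernels the counting is clean. Squaring $\big\|f_{\lambda_1}\tilde{\otimes}_{r_2}\cdots\tilde{\otimes}_{r_k}f_{\lambda_k}\big\|$ produces a full contraction of $2k$ copies of the $f_\lambda$'s; applying the swap identity repeatedly I would reorganize this into a product of $k$ pairwise self-contractions, each of which the indicator constraints $r_2<q,\ r_2+r_3<\tfrac{3q}{2},\dots$ in \eqref{Gamma of integral} force to be genuine. Via polarization each factor is $\lesssim(\max_l\kappa_{4e_l})^{1/2}$, so the squared norm is $\lesssim(\max_l\kappa_{4e_l})^{k/2}$ and the kernel norm is $\lesssim(\max_l\kappa_{4e_l})^{k/4}$, giving precisely the exponents $3/4,1,5/4$ for $k=3,4,5$. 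The mixed cumulant in \eqref{Gamma4} is a full contraction of $4$ copies, which pairs into $2$ genuine self-contractions and is therefore $\lesssim\max_l\kappa_{4e_l}(F)$, consistent with the fluctuation bound; so \eqref{Gamma3} and \eqref{Gamma4} follow.

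\noindent The main obstacle is the mean term in \eqref{Gamma5}, namely $\big|\kappa_{e_{j_1}+\cdots+e_{j_5}}(F)\big|\lesssim(\max_l\kappa_{4e_l})^{5/4}$. This is a full contraction of an \emph{odd} number of copies, so it cannot be paired entirely into pure self-contractions: naive pairing leaves one bare factor and yields only $(\max_l\kappa_{4e_l})\cdot\max_l\|f_l\|$, which (since $\kappa_{4e_l}^{1/4}\lesssim\|f_l\|$) is strictly weaker than the required $(\max_l\kappa_{4e_l})^{5/4}$. Extracting the full power demands a refined interpolation-type contraction inequality --- the Wiener-chaos analogue of the $\ell^5\le\ell^4$ norm monotonicity visible already in the eigenvalue model for $q=2$, and the exact multidimensional counterpart of the single-variable estimates of \cite{Hermine2012}. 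Handling this odd full contraction sharply is the technical heart of the proof; once it is in place, combining it with the fluctuation bound above closes \eqref{Gamma5}, and the remaining steps are routine bookkeeping over the finitely many contraction configurations allowed by \eqref{Gamma of integral}.
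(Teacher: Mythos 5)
Your overall route is the same as the paper's (which itself adapts \cite[Proposition 4.3]{Hermine2012}): expand each $\Gamma$-variable through \eqref{Gamma of integral}, identify the order-zero term with the cumulant via Theorem \ref{cumulant and Gamma}, bound every remaining kernel by iterated Cauchy--Schwarz and polarization, and finish with $\max_{1\le r\le q-1}\|f_i\otimes_r f_i\|^2\le c(q)\,\kappa_{4e_i}(F)$. The first genuine gap is your claim that the indicator constraints in \eqref{Gamma of integral} force every pairwise contraction produced by the repeated swap identity to be genuine. They do not: they only forbid the \emph{accumulated} kernel from being fully contracted at an intermediate stage; they do not prevent an individual $r_i$ from equalling $q$. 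Already for \eqref{Gamma3}, the kernel $\left(f_i\tilde{\otimes}_{r_2}f_j\right)\tilde{\otimes}_{q}f_k$ with $1\le r_2<q/2$ is a legitimate fluctuation term of positive order $q-2r_2$, and for it your iterated bound produces the factor $\sqrt{\|f_k\otimes_{q-r_3}f_k\|}=\sqrt{\|f_k\otimes_{0}f_k\|}=\|f_k\|$, a bare norm not controlled by any $\kappa_{4e_l}(F)$ --- exactly the ``bare factor'' defect you diagnose for the odd cumulant, occurring inside the terms you call clean. The paper's proof devotes a separate case ($r_3=q$, $r_2<q/2$) to precisely this configuration and resolves it with the re-association inequality of \cite[Equation (4.6)]{Hermine2012}, namely $\left\|\left(f_i\tilde{\otimes}_{r_2}f_j\right)\tilde{\otimes}_{q}f_k\right\|\le\max_{1\le r\le q-1}\sqrt{\|f_i\otimes_r f_i\|}\,\|f_j\otimes_r f_k\|$, followed by one more polarization. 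Without this (or an equivalent) inequality your bound on the fluctuation kernels, and hence even \eqref{Gamma3}, does not close; the same degenerate sub-cases also appear among the fluctuation terms of \eqref{Gamma4} and \eqref{Gamma5}.

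Second, you explicitly leave the estimate $|\kappa_m(F)|\le c(q)\max_{1\le l\le d}\kappa_{4e_l}(F)^{5/4}$ for $|m|=5$ as an unresolved obstacle, so \eqref{Gamma5} is simply not proved in your proposal. The missing idea is the same re-association, applied as a block Cauchy--Schwarz: split the five copies into a block of two and a block of three, contract within blocks first, and apply Cauchy--Schwarz across the blocks. Writing $\theta=\max_{1\le l\le d}\max_{1\le r\le q-1}\|f_l\otimes_r f_l\|$, the two-copy block is a genuine mixed contraction (the direct link carries $r_2$ edges with $1\le r_2\le q-1$, and this survives symmetrization), hence at most $\theta$ after polarization, while the three-copy block is bounded by $\theta^{3/2}$ by the $|m|=3$ analysis; this gives $\theta^{5/2}\le c(q)\max_l\kappa_{4e_l}(F)^{5/4}$ with no bare $\|f\|$ anywhere. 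This splitting is exactly what the paper's phrase ``suitable modification to the proof of \cite[Proposition 4.3]{Hermine2012}'' refers to, the one-dimensional version being carried out there. In short, you identified the correct toolkit and part of the difficulty, but as written the proposal neither handles the full-contraction-of-one-factor cases inside the fluctuation terms nor supplies the odd-cumulant estimate, so none of the three inequalities is completely established; both defects are repaired by the same re-association/block-splitting lemma that the paper's proof makes explicit.
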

	\begin{Rem}
		See \cite[Proposition 4.3]{Hermine2012} for the estimates of cumulants and related $\Gamma$-random variables for $d=1$.
	\end{Rem}
	\begin{proof}
		By suitable modification to the proof of \cite[Proposition 4.3]{Hermine2012}, we can get the conclusion. Here, we show \eqref{Gamma3} as an instance. According to Equation \eqref{Gamma of integral},
		\begin{equation}\label{gammaijk}
			\Gamma_{e_i,e_j,e_k}(F)=  \sum_{r_2=1}^{q-1}  \sum_{r_3=1}^{\left( 2q-2 r_2\right)  \wedge q} c_{q,l}\left(r_2, r_{3}\right)  I_{3q-2 r_2-2 r_3}\left(\left( f_{i} \tilde{\otimes}_{r_2} f_{j} \right)  \tilde{\otimes}_{r_3} f_{k}\right) ,
		\end{equation}
		where $c_{q,l}\left(r_2, r_{3}\right)$ defined as \eqref{c_q,l} is a constant only depending on $q$, $r_2$ and $r_3$.
		By Theorem \ref{cumulant and Gamma}, we have 
		\begin{equation}
			\mathrm{E}\left[ \Gamma_{e_i,e_j,e_k}(F)\right] =\frac{1}{2}\kappa_{e_i+e_j+e_k}(F).
		\end{equation}
		Therefore, the random variable $\Gamma_{e_i,e_j,e_k}(F)-\frac{1}{2} \kappa_{e_i+e_j+e_k}(F)$ is obtained by restricting the sum in \eqref{gammaijk} to the terms such that $2r_2+2r_3<3 q$. Combining the fact that there exists a constant $c(q)$ only depending on $q$ such that
		\begin{equation}
			\max_{1 \leq r \leq q-1}\left\|f_i\otimes_{r} f_i\right\|_{\mathfrak{H}^{\otimes(2q-2r)}}^2\leq c(q)\kappa_{4e_i}(F),
		\end{equation}
		which is from \cite[Equation (5.2.6)]{nourdin2012normal}, it suffices to show that for $r_2$ and $r_3$ satisfying $1\leq r_2\leq q-1$, $1\leq r_3\leq \left( 2q-2 r_2\right)  \wedge q$ and $2r_2+2r_3<3 q$,
		\begin{equation}
			\left\| \left( f_{i} \tilde{\otimes}_{r_2} f_{j} \right)  \tilde{\otimes}_{r_3} f_{k}\right\| _{\mathfrak{H}^{\otimes(3q-2r_2-2r_3)}}\leq \max_{1 \leq i \leq d}	\max_{1 \leq r \leq q-1}\left\|f_i\otimes_{r} f_i\right\|_{\mathfrak{H}^{\otimes(2q-2r)}}^{\frac{3}{2}}.
		\end{equation}
		Firstly we assume that $r_3<q$, then both $q-r_2$ and $q-r_3$ belong to $\left\lbrace 1,\ldots ,q-1\right\rbrace $. By Cauchy-Schwarz inequality (or see \cite[Equation (4.3), Equation (4.4)]{Hermine2012}), we get that
		\begin{align}
			\left\| \left( f_{i} \tilde{\otimes}_{r_2} f_{j} \right)  \tilde{\otimes}_{r_3} f_{k}\right\| _{\mathfrak{H}^{\otimes(3q-2r_2-2r_3)}}&\leq \left\| \left( f_{i} \tilde{\otimes}_{r_2} f_{j} \right)  \otimes_{r_3} f_{k}\right\| _{\mathfrak{H}^{\otimes(3q-2r_2-2r_3)}}\\
			&\leq \left\|  f_{i} \tilde{\otimes}_{r_2} f_{j}\right\| _{\mathfrak{H}^{\otimes(2q-2r_2)}}\sqrt{\left\| f_k  \otimes_{q-r_3} f_{k}\right\| _{\mathfrak{H}^{\otimes2r_3}}}\\
			&\leq \sqrt{\left\| f_i  \otimes_{q-r_2} f_{i}\right\| _{\mathfrak{H}^{\otimes2r_2}}}\sqrt{\left\| f_j \otimes_{q-r_2} f_{j}\right\| _{\mathfrak{H}^{\otimes2r_2}}}\sqrt{\left\| f_k  \otimes_{q-r_3} f_{k}\right\| _{\mathfrak{H}^{\otimes2r_3}}}\\
			&\leq\max_{1 \leq i \leq d}	\max_{1 \leq r \leq q-1}\left\|f_i\otimes_{r} f_i\right\|_{\mathfrak{H}^{\otimes(2q-2r)}}^{\frac{3}{2}}.
		\end{align}
		Now we consider the case when $r_3=q$ and $r_2< \frac{q}{2}$. Then 
		\begin{equation}
			\left\| \left( f_{i} \tilde{\otimes}_{r_2} f_{j} \right)  \tilde{\otimes}_{r_3} f_{k}\right\| _{\mathfrak{H}^{\otimes(3q-2r_2-2r_3)}}=\left\langle f_{i} \tilde{\otimes}_{r_2} f_{j} ,f_k \right\rangle _{\mathfrak{H}^{\otimes q}}	
		\end{equation}
		defines a function of $q-2r_2$ variables. By the similar argument of the proof of \cite[Equation  (4.6)]{Hermine2012}, we know that
		\begin{equation}
			\left\| \left( f_{i} \tilde{\otimes}_{r_2} f_{j} \right)  \tilde{\otimes}_{r_3} f_{k}\right\| _{\mathfrak{H}^{\otimes(3q-2r_2-2r_3)}}\leq \max_{1 \leq r \leq q-1}\sqrt{\left\|f_i\otimes_{r} f_i\right\|_{\mathfrak{H}^{\otimes(2q-2r)}}}\left\|f_j\otimes_{r} f_k\right\|_{\mathfrak{H}^{\otimes(2q-2r)}}.
		\end{equation}
		Using \cite[Equation (4.4)]{Hermine2012} again, we obtain that 
		\begin{align}
			&\left\| \left( f_{i} \tilde{\otimes}_{r_2} f_{j} \right)  \tilde{\otimes}_{r_3} f_{k}\right\| _{\mathfrak{H}^{\otimes(3q-2r_2-2r_3)}}\\\leq& \,\max_{1 \leq r \leq q-1}\sqrt{\left\|f_i\otimes_{r} f_i\right\|_{\mathfrak{H}^{\otimes(2q-2r)}}}\sqrt{\left\|f_j\otimes_{r} f_j\right\|_{\mathfrak{H}^{\otimes(2q-2r)}}}\sqrt{\left\|f_k\otimes_{r} f_k\right\|_{\mathfrak{H}^{\otimes(2q-2r)}}}\\\leq&\, \max_{1 \leq i \leq d}	\max_{1 \leq r \leq q-1}\left\|f_i\otimes_{r} f_i\right\|_{\mathfrak{H}^{\otimes(2q-2r)}}^{\frac{3}{2}}.
		\end{align}
		Then we finish the proof.
	\end{proof} 
	
	Inspired by \cite{Hermine2012,NP2015}, we next construct several specific test functions that will be utilized in the proof of the lower bound in Theorem \ref{Main result1}. Let $$a=\exp{\left\lbrace -\frac{1}{2}\max\limits_{t\in\left\lbrace-1,0,1\right\rbrace  ^d}t^TCt\right\rbrace}.$$ Define 
	\begin{align}
		g_t(x)&=a\exp{\left\lbrace \frac{1}{2}t^TCt\right\rbrace}\sin \left(\left\langle t,x\right\rangle _{\mathbb{R}^d}\right),\\
		h_t(x)&=a\exp{\left\lbrace \frac{1}{2}t^TCt\right\rbrace}\cos \left(\left\langle t,x\right\rangle _{\mathbb{R}^d}\right).
	\end{align}
	\begin{Lemma}\label{specific test function}
		Fix $1\leq i,j,k\leq d$ satisfying $i\neq j,k$ and $j\neq k$, define $h_i(x), g_i(x),g_{ij}(x), g_{ijk}(x): \mathbb{R}^{d}\rightarrow \mathbb{R}$ as
		\begin{align}
			h_i(x)&=h_{e_i}(x)=ae^{\frac{1}{2}C_{ii}}\cos x_i,\\
			g_i(x)&=g_{e_i}(x)=ae^{\frac{1}{2}C_{ii}}\sin x_i,\\
			g_{ij}(x)&=\frac{1}{4}\left( g_{e_i-e_j}(x)-g_{e_i+e_j}(x)+2g_{e_j}(x)\right), \\
			g_{ijk}(x)&=\frac{1}{12}\left( g_{e_i+e_j-e_k}(x)-g_{e_i+e_j+e_k}(x)-4g_{ik}(x)-4g_{jk}(x)+2g_{e_k}(x)\right). 
		\end{align}
		Then $h_i(x), g_i(x),g_{ij}(x), g_{ijk}(x)$ are bounded by one and infinitely continuously differentiable with all derivatives bounded by one, and satisfy
		\begin{align}
			\mathrm{E}\left[\partial^m U_{h_i, C}(Z)\right]&=\left\lbrace 
			\begin{aligned}
				&\frac{a}{|m|}(-1)^\frac{|m|}{2},& m= |m|e_i, |m|=0,2,4,\cdots,\\
				&0, &otherwise,
			\end{aligned}\right.\label{equation1}\\
			\mathrm{E}\left[\partial^m U_{g_i, C}(Z)\right]&=\left\lbrace 
			\begin{aligned}
				&\frac{a}{|m|}(-1)^\frac{|m|-1}{2},& m= |m|e_i, |m|=1,3,5,\cdots,\\
				&0, &otherwise,
			\end{aligned}\right.\label{equation2}
		\end{align}
	    \begin{align}
			\mathrm{E}\left[\partial^m U_{g_{ij}, C}(Z)\right]&=\left\lbrace 
			\begin{aligned}
				&\frac{a}{2|m|}(-1)^{\frac{|m|+1}{2}},& m=m_ie_i+m_je_j, m_i>0, m_j \mbox{ and } |m| \mbox{ are odd,} 			\\
				&0, &otherwise,
			\end{aligned}\right.\label{equation3}
		\end{align}
		and
		\begin{align}\label{equation4}
			&\mathrm{E}\left[\partial^m U_{g_{ijk}, C}(Z)\right]\\=&\,\left\lbrace 
			\begin{aligned}
				&\frac{a}{6|m|}(-1)^{\frac{|m|+1}{2}},& m=m_ie_i+m_je_j+m_ke_k, m_i,m_j>0, m_k \mbox{ and } |m| \mbox{ are odd,}  \\
				&0, &otherwise,
			\end{aligned}\right.
		\end{align}
		where $Z\sim \mathcal{N}_d(0,C)$ and $U_{g, C}(x)$ is defined as \eqref{solution of Stein}.
	\end{Lemma}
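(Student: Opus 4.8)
The plan is to verify the four identities by reducing everything to the single master formula for $\mathrm{E}[\partial^m U_{g_t,C}(Z)]$ attached to the building blocks $g_t$ and $h_t$, and then assembling the composite test functions $g_{ij}, g_{ijk}$ by linearity. First I would establish the base case: since $U_{g,C}$ satisfies $\mathrm{E}[\partial^m U_{g,C}(Z)] = \tfrac{1}{|m|}\mathrm{E}[\partial^m g(Z)]$ by \eqref{Expectation of derivative of U}, the whole computation collapses to evaluating Gaussian expectations of derivatives of $g_t$ and $h_t$. For $g_t(x) = a\exp\{\tfrac12 t^TCt\}\sin\langle t,x\rangle_{\mathbb{R}^d}$, one has $\partial^m g_t(x) = a\exp\{\tfrac12 t^TCt\} t^m \,\partial_y^{|m|}\sin(y)\big|_{y=\langle t,x\rangle}$, where $t^m = \prod_i t_i^{m_i}$, so the $m$-th derivative is $a\exp\{\tfrac12 t^TCt\} t^m$ times either $\pm\sin$ or $\pm\cos$ of $\langle t,x\rangle$ depending on $|m| \bmod 4$. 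The key analytic input is the classical Gaussian identity $\mathrm{E}[\sin\langle t,Z\rangle] = 0$ and $\mathrm{E}[\cos\langle t,Z\rangle] = \exp\{-\tfrac12 t^TCt\}$ for $Z\sim\mathscr{N}_d(0,C)$; the prefactor $a\exp\{\tfrac12 t^TCt\}$ is designed precisely to cancel the exponential, leaving a clean $a$.

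Carrying this out, I would obtain the master identity: $\mathrm{E}[\partial^m U_{g_t,C}(Z)] = \tfrac{a}{|m|} t^m \cdot s$, where $s\in\{0,\pm1\}$ is the appropriate trigonometric sign pattern—zero when $|m|$ is even (because an even number of derivatives of $\sin$ returns to $\pm\sin$, whose Gaussian mean vanishes), and $\pm1$ when $|m|$ is odd (returning to $\pm\cos$). The analogous formula for $h_t$ swaps the roles, vanishing for odd $|m|$. Specializing $t=e_i$ gives $t^m = \delta_{m,|m|e_i}$ (the monomial $t^m$ is nonzero only if $m$ is supported on coordinate $i$), which immediately yields \eqref{equation1} and \eqref{equation2} after tracking the sign $(-1)^{|m|/2}$ or $(-1)^{(|m|-1)/2}$. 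The boundedness claims are routine: $\|g_t\|_\infty \le a\exp\{\tfrac12 t^TCt\} \le 1$ by the definition of $a$ as a maximizing constant over $t\in\{-1,0,1\}^d$, and every derivative $\partial^m g_t$ carries the same bound since $|t^m|\le 1$ and $t\in\{-1,0,1\}^d$ keeps $|t_i|\le 1$; smoothness is clear because $\sin,\cos$ are entire.

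For the composite functions I would simply expand by linearity of $m\mapsto\mathrm{E}[\partial^m U_{\cdot,C}(Z)]$ and substitute the master identity evaluated at the relevant $t$'s: $t = e_i\pm e_j$ for $g_{ij}$ and $t = e_i+e_j\pm e_k$ (plus the already-computed $g_{ik},g_{jk},g_{e_k}$) for $g_{ijk}$. The monomial factor $t^m = \prod_l t_l^{m_l}$ acts as a selector: for $t = e_i - e_j$ one has $t^m = (-1)^{m_j}$ when $m$ is supported on $\{i,j\}$ and $0$ otherwise, while for $t=e_i+e_j$ one gets $+1$ on the same support. Adding the two sine-terms with the prescribed coefficients $\tfrac14$ causes the $m_j$-even contributions to cancel and the $m_j$-odd ones to survive, which after combining with the $2g_{e_j}$ correction term produces exactly the stated support condition ($m_i>0$, $m_j$ and $|m|$ odd) and the constant $\tfrac{a}{2|m|}(-1)^{(|m|+1)/2}$ in \eqref{equation3}; the three-index case \eqref{equation4} follows by the same bookkeeping one level deeper.

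The main obstacle I anticipate is not any single step but the combinatorial sign-and-support tracking in the composite identities: one must carefully verify that the specific rational coefficients $\tfrac14, \tfrac1{12}$ and the subtraction of the lower-order $g_{ik}, g_{jk}, g_{e_k}, g_{e_j}$ terms conspire to annihilate all the \emph{unwanted} multi-indices (those where some required coordinate fails to appear with odd multiplicity, or where $|m|$ has the wrong parity) while leaving a uniform constant on the \emph{wanted} ones. This is a finite but delicate check, essentially a parity-bookkeeping argument exploiting that $g_t + g_{-t} = 2g_t$-type relations interact with the $\pm$ signs in $t^m$; I would organize it by fixing an arbitrary $m$, splitting into cases according to which of $m_i,m_j,m_k$ are zero or positive and the parity of $|m|$, and confirming that each case reproduces the right-hand side of \eqref{equation3} or \eqref{equation4}. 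The boundedness of $g_{ij}$ and $g_{ijk}$ by one is the remaining point requiring a short argument, since it is not immediate from the triangle inequality given the coefficients; here I would invoke that each constituent $g_t$ is itself bounded by one together with the trigonometric sum-to-product structure, rather than bounding term by term.
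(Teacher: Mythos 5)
Your proposal is correct and follows essentially the same route as the paper: reduce everything to $\mathrm{E}\left[\partial^m U_{g, C}(Z)\right]=\frac{1}{|m|} \mathrm{E}\left[\partial^m g(Z)\right]$, evaluate the Gaussian expectations of derivatives of the trigonometric blocks via $\mathrm{E}\left[\sin \left(\left\langle t,Z\right\rangle _{\mathbb{R}^d}\right) \right]=0$ and $\mathrm{E}\left[\cos \left(\left\langle t,Z\right\rangle _{\mathbb{R}^d}\right) \right]=e^{-\frac{1}{2}t^TCt}$, and assemble $g_{ij}$, $g_{ijk}$ by linearity with exactly the $t^m$ sign-and-support bookkeeping the paper carries out case by case. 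Your one anticipated obstacle dissolves: since each block and all of its derivatives are bounded by $ae^{\frac{1}{2}t^TCt}\left|t^m\right|\leq 1$ and the coefficients satisfy $\frac{1}{4}(1+1+2)=1$ and $\frac{1}{12}(1+1+4+4+2)=1$, the triangle inequality applied term by term already bounds $g_{ij}$, $g_{ijk}$ and all their derivatives by one, so no sum-to-product argument is needed.
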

	
	\begin{proof}
		Firstly, it is obvious that $h_i(x), g_i(x), g_{ij}(x), g_{ijk}(x)$ are bounded by one and infinitely continuously differentiable with all derivatives bounded by one.
		
		For $Z=(Z_1,\ldots, Z_d)\sim \mathcal{N}_d(0,C)$, for any $t\in \mathbb{R}^d$, we have
		\begin{equation}
			e^{-\frac{1}{2}t^TCt}= \mathrm{E}\left[ e^{\mathrm{i}\left\langle t,Z\right\rangle _{\mathbb{R}^d}}\right] =\mathrm{E}\left[\cos \left( \left\langle t,Z\right\rangle _{\mathbb{R}^d}\right) \right]+\mathrm{i}\mathrm{E}\left[\sin \left(\left\langle t,Z\right\rangle _{\mathbb{R}^d}\right) \right],
		\end{equation}
		that is, for any $t\in \mathbb{R}^d$,
		\begin{equation}
			\mathrm{E}\left[\sin \left(\left\langle t,Z\right\rangle _{\mathbb{R}^d}\right) \right]=0, \quad \mathrm{E}\left[\cos \left(\left\langle t,Z\right\rangle _{\mathbb{R}^d}\right) \right]=e^{-\frac{1}{2}t^TCt}.
		\end{equation}
		Fix $1\leq i\leq d$, let $h_i(x): \mathbb{R}^{d}\rightarrow \mathbb{R}, h_i(x)=h_{e_i}(x)=ae^{\frac{1}{2}C_{ii}}\cos x_i$,
		\begin{equation}
			\partial^{m} h_i(x)=\left\lbrace
			\begin{aligned}
				& ae^{\frac{1}{2}C_{ii}}(-1)^{\frac{|m|+1}{2}}\sin x_i , & m=|m|e_i, |m|=1,3,5,\cdots,\\
				& ae^{\frac{1}{2}C_{ii}}(-1)^{\frac{|m|}{2}}\cos x_i ,& m=|m|e_i, |m|=0,2,4,\cdots,\\
				& 0 , &otherwise .		    
			\end{aligned} \right. 
		\end{equation}
		Then, by \eqref{Expectation of derivative of U},
		\begin{equation}
			\mathrm{E}\left[\partial^m U_{h_i, C}(Z)\right]=\frac{1}{|m|} \mathrm{E}\left[\partial^m h_i(Z)\right]=\left\lbrace 
			\begin{aligned}
				&\frac{a}{|m|}(-1)^\frac{|m|}{2},& m= |m|e_i, |m|=0,2,4,\cdots,\\
				&0, &otherwise.
			\end{aligned}\right. 
		\end{equation}
	By a similar argument, we get \eqref{equation2} and for fixed $1\leq i,j\leq d$ satisfying $i\neq j$,
		\begin{align}
			\mathrm{E}\left[\partial^m U_{g_{e_i-e_j}, C}(Z)\right]&=\frac{1}{|m|} \mathrm{E}\left[\partial^m g_{e_i-e_j}(Z)\right]\\&=\left\lbrace 
			\begin{aligned}
				&\frac{a}{|m|}(-1)^{\frac{|m|-1}{2}+m_j},& m=m_ie_i+m_je_j, |m|\mbox{ is odd},				\\
				&0, &otherwise,
			\end{aligned}\right. \\
			\mathrm{E}\left[\partial^m U_{g_{e_i+e_j}, C}(Z)\right]&=\frac{1}{|m|} \mathrm{E}\left[\partial^m g_{e_i+e_j}(Z)\right]\\&=\left\lbrace 
			\begin{aligned}
				&\frac{a}{|m|}(-1)^{\frac{|m|-1}{2}},& m=m_ie_i+m_je_j, |m|\mbox{ is odd},				\\
				&0, &otherwise.
			\end{aligned}\right. 
		\end{align}
		Then for $g_{ij}(x)=\frac{1}{4}\left( g_{e_i-e_j}(x)-g_{e_i+e_j}(x)+2g_{e_j}(x)\right)$, by \eqref{Derivative of U},
		\begin{align}
			&\mathrm{E}\left[\partial^m U_{g_{ij}, C}(Z)\right]=\frac{1}{4}\left( \mathrm{E}\left[\partial^m U_{g_{e_i-e_j}, C}(Z)\right]- \mathrm{E}\left[\partial^m U_{g_{e_i+e_j}, C}(Z)\right]+2\mathrm{E}\left[\partial^m U_{g_j, C}(Z)\right] \right) \\=&\,\left\lbrace 
			\begin{aligned}
				&\frac{a}{2|m|}(-1)^{\frac{|m|+1}{2}},& m=m_ie_i+m_je_j, m_i>0, m_j \mbox{ is odd, } |m|\mbox{ is odd},				\\
				&0, &otherwise.
			\end{aligned}\right. 
		\end{align}
		Similarly, we can obtain \eqref{equation4}.

	\end{proof}

	We are now turning to the proof of Theorem \ref{Main result1}.
	\begin{proof}
	\emph{Upper bound.}	By Stein's equation \eqref{Stein's equation} and Proposition \ref{expand},
		\begin{align}\label{expansion for F_n}
			\mathrm{E}\left[ g(Z)\right] - \mathrm{E}\left[ g(F_n)\right]
			&= \mathrm{E}\left[ \left\langle F_n, \nabla U_{g, C}(F_n)\right\rangle_{\mathbb{R}^d} \right] -\mathrm{E} \left[ \left\langle C, \operatorname{Hess} U_{g, C}(F_n)\right\rangle_{\mathrm{HS}}\right] \\
			&=\sum_{s=3}^{M-1}\sum_{m=e_{j_1}+\dots+e_{j_s}, \atop 1\leq j_k\leq d, 1\leq k\leq s} \frac{\kappa_{m}(F_n)}{(s-1)!} \mathrm{E}\left[\partial^{m}U_{g, C}(F_n)\right]\\&\quad+\sum_{m=e_{j_1}+\dots+e_{j_M},\atop 1\leq j_k\leq d, 1\leq k\leq M} \mathrm{E}\left[\Gamma_{e_{j_1},\ldots, e_{j_M}}(F_n) \partial^{m}U_{g, C}(F_n)\right].
		\end{align}
		Take $M=4$, 
		\begin{align}
			\mathrm{E}\left[ g(Z)\right] - \mathrm{E}\left[ g(F_n)\right]&=\frac{1}{2}\sum_{m=e_{i}+e_{j}+e_k,  \atop 1\leq i,j,k\leq d} \kappa_{m}(F_n) \mathrm{E}\left[\partial^{m}U_{g, C}(F_n)\right]\\&\quad+
			\sum_{m=e_{i}+e_{j}+e_k+e_l,
				\atop 1\leq i,j,k,l\leq d} \mathrm{E}\left[\Gamma_{e_{i},e_{j},e_k,e_l}(F_n) \partial^{m}U_{g, C}(F_n)\right].
		\end{align}
		Combining \eqref{Bound of derivative of U} and Proposition \ref{estimation of Gamma},
		\begin{align}
			\left| \mathrm{E}\left[ g(Z)\right] - \mathrm{E}\left[ g(F_n)\right]\right| &\leq \frac{1}{2}\sup_{x\in{\mathbb{R}}^d,|m|=3}\left| \partial^{m}U_{g, C}(x)\right| \sum_{m=e_{i}+e_{j}+e_k} \left| \kappa_{m}(F_n)\right|  \\&\quad+\sup_{x\in{\mathbb{R}}^d,|m|=4}\left| \partial^{m}U_{g, C}(x)\right|
			\sum_{m=e_{i}+e_{j}+e_k+e_l} \mathrm{E}\left[\left| \Gamma_{e_{i},e_{j},e_k,e_l}(F_n)\right|  \right]
			\\&\leq \frac{d^3}{6} \sum_{|m|=3}\left|  \kappa_{m}(F_n)\right| +\frac{d^4}{4}c_2(q)\sum_{i=1}^{d}\kappa_{4e_i}(F_n)
			\\&\leq \max\left\lbrace \frac{d^3}{3}, \frac{d^4c_2(q)}{2}\right\rbrace  \max\left\lbrace \sum_{|m|=3}\left|  \kappa_{m}(F_n)\right|, \sum_{i=1}^{d}\kappa_{4e_i}(F_n)\right\rbrace.
		\end{align}

		\emph{Lower bound.} Take $M=5$ in \eqref{expansion for F_n}, we have 
		\begin{align}
			&\mathrm{E}\left[ g(Z)\right] - \mathrm{E}\left[ g(F_n)\right]
			\\=&\,\frac{1}{2}\sum_{m=e_{i}+e_{j}+e_k,\atop 1\leq i,j,k\leq d} \kappa_{m}(F_n) \mathrm{E}\left[\partial^{m}U_{g, C}(F_n)\right]+\frac{1}{6}\sum_{m=e_{i}+e_{j}+e_k+e_l, \atop 1\leq i,j,k,l\leq d} \kappa_{m}(F_n) \mathrm{E}\left[\partial^{m}U_{g, C}(F_n)\right]\\&\,+
			\sum_{m=e_{j_1}+\dots+e_{j_5},\atop 1\leq j_k\leq d, 1\leq k\leq 5} E\left[\Gamma_{e_{j_1},\ldots, e_{j_5}}(F_n) \partial^{m}U_{g, C}(F_n)\right].
		\end{align}
		Replace the test function $g$ with $h_i$, then we get that
		\begin{align}
			&\left| \mathrm{E}\left[ h_i(Z)\right] - \mathrm{E}\left[ h_i(F_n)\right]-\frac{a}{24} \kappa_{4e_i}(F_n) \right|\\
			=&\,\left| \mathrm{E}\left[ h_i(Z)\right] - \mathrm{E}\left[ h_i(F_n)\right]-\sum_{m=e_{i}+e_{j}+e_k+e_l} \frac{\kappa_{m}(F_n)}{6} \mathrm{E}\left[\partial^{m}U_{h_i, C}(Z)\right]\right| \\
			=&\,\left| \sum_{m=e_{i}+e_{j}+e_k,\atop 1\leq i,j,k\leq d} \frac{\kappa_{m}(F_n)}{2} \mathrm{E}\left[\partial^{m}U_{h_i,C}(F_n)\right]\right. \\&\left. \,+\sum_{m=e_{i}+e_{j}+e_k+e_l, \atop 1\leq i,j,k,l\leq d} \frac{\kappa_{m}(F_n)}{6} \left( \mathrm{E}\left[\partial^{m}U_{h_i,C}(F_n)\right]-\mathrm{E}\left[\partial^{m}U_{h_i,C}(Z)\right]\right)\right. \\&\,+
			\left. \sum_{m=e_{j_1}+\dots+e_{j_5},\atop 1\leq j_k\leq d, 1\leq k\leq 5} \mathrm{E}\left[\Gamma_{e_{j_1},\ldots, e_{j_5}}(F_n) \partial^{m}U_{h_i, C}(F_n)\right]\right| \\
			\leq &\, \frac{1}{2}\sum_{m=e_{i}+e_{j}+e_k} \left| \kappa_{m}(F_n)\right|  \left|\mathrm{E}\left[ \partial^{m}U_{h_i, C}(F_n)\right]\right|\\
			&\, + \frac{1}{6}\sum_{m=e_{i}+e_{j}+e_k+e_l} \left| \kappa_{m}(F_n)\right| \left|  \mathrm{E}\left[\partial^{m}U_{h_i, C}(F_n)\right]-\mathrm{E}\left[ \partial^{m}U_{h_i, C}(Z) \right]\right|\\
			&\,+ \sum_{m=e_{j_1}+\dots+e_{j_5},\atop 1\leq j_k\leq d, 1\leq k\leq 5} \left\|  \partial^{m}U_{h_i, C}\right\| _{\infty} \mathrm{E}\left[\left| \Gamma_{e_{j_1},\ldots, e_{j_5}}(F_n)\right| \right] \\
			\leq&\, \max\left\lbrace \sum_{|m|=3}\left|  \kappa_{m}(F_n)\right|, \sum_{i=1}^{d}\kappa_{4e_i}(F_n)\right\rbrace\left[ \frac{1}{2}\sum_{m=e_{i}+e_{j}+e_k}  \left|\mathrm{E}\left[ \partial^{m}U_{h_i, C}(F_n)\right]\right|\right. \\&\left. \,+c_2(q)\sum_{m=e_{i}+e_{j}+e_k+e_l} \left|  \mathrm{E}\left[\partial^{m}U_{h_i, C}(F_n)\right]-\mathrm{E}\left[ \partial^{m}U_{h_i, C}(Z) \right]\right|\right. \\&\left. \,+\frac{c_3(q)d^5}{5}\left(\sum_{i=1}^{d}\kappa_{4e_i}(F_n) \right) ^{\frac{1}{4}}\right] .
		\end{align}
		As $n\rightarrow\infty$, we have 
		$\mathrm{E}\left[\partial^{m}U_{h_i, C}(F_n)\right]\rightarrow \mathrm{E}\left[\partial^{m}U_{h_i, C}(Z)\right]=0$ for $|m|=3$, $\mathrm{E}\left[\partial^{m}U_{h_i, C}(F_n)\right]-\mathrm{E}\left[ \partial^{m}U_{h_i, C}(Z) \right]\rightarrow 0$, and $\sum_{i=1}^{d}\kappa_{4e_i}(F_n)\rightarrow 0$. Therefore, set 
		\begin{equation}
			\bar{d}=2d+d(d-1)+\frac{d(d-1)(d-2)}{6}	,\quad c_1=\frac{a}{36\left( \bar{d}+1\right) },
		\end{equation}
		we have that for $n$ large enough,
		\begin{equation}
			\left| \mathrm{E}\left[ h_i(Z)\right] - \mathrm{E}\left[ h_i(F_n)\right]-\frac{a}{24} \kappa_{4e_i}(F_n) \right|\leq \frac{c_1}{\bar{d}}M(F_n),
		\end{equation}
		which implies that 
		\begin{equation}
			\left| \mathrm{E}\left[ h_i(Z)\right] - \mathrm{E}\left[ h_i(F_n)\right] \right|\geq \frac{a}{24} \kappa_{4e_i}(F_n)-\frac{c_1}{\bar{d}}M(F_n), \quad 1 \leq i\leq d.
		\end{equation}
		Similarly, for $1 \leq i,j,k\leq d$,
		\begin{align}
			\left| \mathrm{E}\left[ g_i(Z)\right] - \mathrm{E}\left[ g_i(F_n)\right] \right|&\geq \frac{a}{6} \left| \kappa_{3e_i}(F_n)\right| -\frac{c_1}{\bar{d}}M(F_n),\\
			\left| \mathrm{E}\left[ g_{ij}(Z)\right] - \mathrm{E}\left[ g_{ij}(F_n)\right] \right|&\geq \frac{a}{12} \left| \kappa_{2e_i+e_j}(F_n)\right| -\frac{c_1}{\bar{d}}M(F_n), \, i\neq j, \\
			\left| \mathrm{E}\left[ g_{ijk}(Z)\right] - \mathrm{E}\left[ g_{ijk}(F_n)\right] \right|&\geq \frac{a}{36} \left| \kappa_{e_i+e_j+e_k}(F_n)\right| -\frac{c_1}{\bar{d}}M(F_n), \, i\neq j, k \mbox{ and } j\neq k.
		\end{align}
		Then 
		\begin{align}
			\bar{d}\rho(F_n,Z)&\geq \sum_{i=1}^{d}\left| \mathrm{E}\left[ h_i(Z)\right] - \mathrm{E}\left[ h_i(F_n)\right]\right|+\sum_{i=1}^{d}\left| \mathrm{E}\left[ g_i(Z)\right] - \mathrm{E}\left[g_i(F_n)\right]\right|\\& + \sum_{i=1}^{d}\sum_{j\neq i}\left| \mathrm{E}\left[ g_{ij}(Z)\right] - \mathrm{E}\left[g_{ij}(F_n)\right]\right|+\sum_{i=1}^{d}\sum_{j= i+1}^{d}\sum_{k=j+1}^{d}\left| \mathrm{E}\left[ g_{ijk}(Z)\right] - \mathrm{E}\left[g_{ijk}(F_n)\right]\right|\\
			&\geq \left( \bar{d}+1\right) c_1\left(\sum_{|m|=3}\left|  \kappa_{m}(F_n)\right|+ \sum_{i=1}^{d}\kappa_{4e_i}(F_n) \right) -c_1 M(F_n)\\
			&\geq \left( \bar{d}+1\right) c_1M(F_n) -c_1 M(F_n)\\
			&=\bar{d}c_1M(F_n).
		\end{align}
		That is,
		\begin{align}
			\rho(F_n,Z)&\geq c_1 M(F_n).
		\end{align}
	\end{proof}

	\section{Applications}\label{Section 4}
	\subsection{Application for complex Wiener-It\^o integral}\label{Section 4.1}
	
	We define the distribution of a complex random variable $F=F_1+\mathrm{i}F_2$ as the distribution of two-dimensional random vector $(F_1,F_2)$. Then the distance between the distributions of two complex random variables $F=F_1+\mathrm{i}F_2$ and $G=G_1+\mathrm{i}G_2$ is actually the distance between the distributions of two two-dimensional random vectors $(F_1,F_2)$ and $(G_1,G_2)$, namely, we take $d=2$ in \eqref{distance} and define
	\begin{equation}
		\rho(F,G)=\sup\left\lbrace\left|\mathrm{E}\left[ g(F_1,F_2) \right] - \mathrm{E}\left[ g(G_1,G_2) \right]   \right|  \right\rbrace,	
	\end{equation} 
	where $g:\mathbb{R}^2\rightarrow \mathbb{R}$ runs over the class of all four-times continuously differentiable functions such that $g$ and all of its derivatives of order up to four are bounded by one. Define the covariance matrix of the complex random variable $F=F_1+\mathrm{i}F_2$ as the covariance matrix of the two-dimensional random vector $(F_1,F_2)$. We write $AF$ to denote $A\begin{pmatrix}
		F_1\\F_2
	\end{pmatrix}$ for any $2\times2$ matrix $A$. 
	
	For a sequence of complex random variables $\left\lbrace F_n=F_{n,1}+\mathrm{i}F_{n,2}: n\geq1\right\rbrace$, let 
	\begin{equation}
		M^{'}(F_n)=\max\left\lbrace \left| \mathrm{E}\left[F_n^3 \right] \right|, \left|\mathrm{E}\left[F_n^2\bar{F_n} \right] \right|, \mathrm{E}\left[\left| F_n \right| ^4 \right]-2\left(\mathrm{E}\left[ \left| F_n \right| ^2\right]  \right) ^2-\left| \mathrm{E}\left[ F_n^2\right] \right| ^2 \right\rbrace.
	\end{equation}
	\begin{Thm}\label{Main result2}
		Consider a sequence of complex Wiener-It\^o integrals $\left\lbrace F_n= \mathcal{I}_{p,q}(f_n): n\geq1\right\rbrace$, where $f_n\in \mathfrak{H}_{\mathbb{C}}^{\odot p}\otimes\mathfrak{H}_{\mathbb{C}}^{\odot q}$ and $p+q\geq 2$. Suppose that $F_n$ converges in distribution to a complex normal random variable $Z$ with the same covariance matrix as $F_n$. Then there exist two finite constants $0<c_1<c_2$ only depending on $p,q$ such that for $n$ large enough,
		\begin{equation}
			c_1 M^{'}(F_n) \leq \rho\left(F_n, Z\right) \leq c_2 M^{'}(F_n).
		\end{equation}
	\end{Thm}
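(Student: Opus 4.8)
The plan is to reduce Theorem \ref{Main result2} to the real-valued result Theorem \ref{Main result1} in dimension $d=2$. Writing $F_n = F_{n,1} + \mathrm{i}F_{n,2}$, I would first invoke \cite[Theorem 3.3]{chen2017fourth} to express each of the real and imaginary parts $F_{n,1}, F_{n,2}$ as a real Wiener-It\^o integral of order $p+q$ over a suitable real isonormal Gaussian process. The hypothesis that $F_n$ converges in distribution to the complex normal $Z$ with matching covariance translates exactly into the statement that the real vector $(F_{n,1}, F_{n,2})$ converges in distribution to $\mathscr{N}_2(0,C)$, where $C$ is the common covariance matrix. Thus Theorem \ref{Main result1}, applied with $q$ replaced by $p+q$ and $d=2$, yields
\[
\rho(F_n, Z) \asymp M\big((F_{n,1}, F_{n,2})\big) = \max\Big\{\textstyle\sum_{|m|=3}|\kappa_m(F_{n,1},F_{n,2})|,\ \kappa_{4e_1}(F_{n,1},F_{n,2})+\kappa_{4e_2}(F_{n,1},F_{n,2})\Big\}.
\]
It then remains to establish the purely algebraic equivalence $M((F_{n,1},F_{n,2})) \asymp M'(F_n)$ relating real joint cumulants of $(F_{n,1},F_{n,2})$ to the complex moment quantities defining $M'$.

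For the third-order part I would expand $F_n^3$ and $F_n^2\overline{F_n}$ into real and imaginary parts and, using $\kappa_{e_i}=0$, read off
\[
\mathrm{E}[F_n^3] = (\kappa_{3e_1}-3\kappa_{e_1+2e_2}) + \mathrm{i}(3\kappa_{2e_1+e_2}-\kappa_{3e_2}), \qquad \mathrm{E}[F_n^2\overline{F_n}] = (\kappa_{3e_1}+\kappa_{e_1+2e_2}) + \mathrm{i}(\kappa_{2e_1+e_2}+\kappa_{3e_2}),
\]
where all cumulants are those of $(F_{n,1},F_{n,2})$. The linear map sending the four third cumulants to the real and imaginary parts of these two complex moments is invertible, so $\sum_{|m|=3}|\kappa_m(F_{n,1},F_{n,2})| \asymp \max\{|\mathrm{E}[F_n^3]|,|\mathrm{E}[F_n^2\overline{F_n}]|\}$ with constants depending only on this fixed map. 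For the fourth-order part a direct expansion gives the exact identity
\[
\mathrm{E}[|F_n|^4] - 2(\mathrm{E}[|F_n|^2])^2 - |\mathrm{E}[F_n^2]|^2 = \kappa_{4e_1}(F_{n,1},F_{n,2}) + \kappa_{4e_2}(F_{n,1},F_{n,2}) + 2\kappa_{2e_1+2e_2}(F_{n,1},F_{n,2}),
\]
so the complex fourth-order quantity differs from the target $\kappa_{4e_1}+\kappa_{4e_2}$ only through the mixed cumulant. Writing $\kappa_{2e_1+2e_2} = 3!\,\mathrm{E}[\Gamma_{e_1,e_1,e_2,e_2}(F_{n,1},F_{n,2})]$ via Theorem \ref{cumulant and Gamma} and applying the bound \eqref{Gamma4} of Proposition \ref{estimation of Gamma} gives $|\kappa_{2e_1+2e_2}| \leq c\,(\kappa_{4e_1}+\kappa_{4e_2})$, which immediately delivers the upper comparison $B' \lesssim B$ in the notation $B:=\kappa_{4e_1}+\kappa_{4e_2}$, $B':=B+2\kappa_{2e_1+2e_2}$.

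The hard part will be the matching lower bound, namely ruling out cancellation so that $B \lesssim B'$. Both quantities are nonnegative — $B$ by \cite[Lemma 5.2.4]{nourdin2012normal} and $B'$ by the complex fourth moment theorem of \cite{chen2017fourth} — but nonnegativity alone does not forbid $B'$ from being asymptotically negligible relative to $B$ when $\kappa_{2e_1+2e_2}$ is near $-B/2$, and in that regime $\max\{A,B\}$ and $\max\{A',B'\}$ could fail to be comparable. To close this gap I would pass to contractions: express $B$ through the real contraction norms of the order-$(p+q)$ kernels of $F_{n,1}, F_{n,2}$, and express $\mathrm{E}[|F_n|^4]-2(\mathrm{E}[|F_n|^2])^2-|\mathrm{E}[F_n^2]|^2$ through the complex contractions $\|f_n \otimes_{i,j} f_n\|$ coming from the product formula \eqref{complex product}, as sums of squared contraction norms. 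Using the explicit relation between the complex kernel $f_n$ and the real kernels supplied by \cite[Theorem 3.3]{chen2017fourth}, I would show that both $B$ and $B'$ are comparable to the common master quantity $\max_{i,j}\|f_n\otimes_{i,j}f_n\|^2_{\mathfrak{H}_{\mathbb{C}}^{\otimes\,\cdot}}$; equivalently, one may try to show directly that $\kappa_{2e_1+2e_2}(F_{n,1},F_{n,2})\geq 0$ for real and imaginary parts of a single complex integral, in which case $B \leq B' \leq (1+2c)B$ follows at once. Establishing this two-sided contraction equivalence is the crux of the argument; once it is in hand, $B \asymp B'$ together with the third-order equivalence gives $M((F_{n,1},F_{n,2})) \asymp M'(F_n)$, and combining with the estimate $\rho(F_n,Z)\asymp M((F_{n,1},F_{n,2}))$ completes the proof.
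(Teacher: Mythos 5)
Your proposal follows the paper's proof architecture step for step: decompose $F_n=F_{n,1}+\mathrm{i}F_{n,2}$, invoke \cite[Theorem 3.3]{chen2017fourth} to realize $(F_{n,1},F_{n,2})$ as a two-dimensional vector of real $(p+q)$-th Wiener-It\^o integrals, apply Theorem \ref{Main result1} with $d=2$, and reduce everything to the equivalence $M\left((F_{n,1},F_{n,2})\right)\asymp M'(F_n)$. Your third-order identities for $\mathrm{E}[F_n^3]$ and $\mathrm{E}[F_n^2\overline{F_n}]$ are correct, and since the two $2\times 2$ blocks of your linear map have determinant $4\neq 0$, the invertibility argument does give the two-sided comparison; this is in fact tidier than the paper's Lemma \ref{3real to complex}, which expresses $F_{1}^3,F_{2}^3,F_{1}^2F_{2},F_{1}F_{2}^2$ through $\mathrm{Re}/\mathrm{Im}$ of $F^3$ and $F^2\bar{F}$ and then runs a nine-case check of the reverse triangle inequality. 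Your fourth-order identity $B'=B+2\kappa_{2e_1+2e_2}$ and the upper comparison $B'\le(1+2c)B$ via Theorem \ref{cumulant and Gamma} and \eqref{Gamma4} coincide exactly with the paper's Lemma \ref{4real to complex}.

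The step you leave open, $B\lesssim B'$, is the one genuine gap in your write-up, and your instinct about how to close it is the right one: the paper proves precisely your ``route (b)'', namely that for the real and imaginary parts of a single complex Wiener-It\^o integral one has $\kappa_{2e_1+2e_2}(F_{n,1},F_{n,2})=\mathrm{E}[F_{n,1}^2F_{n,2}^2]-\mathrm{E}[F_{n,1}^2]\,\mathrm{E}[F_{n,2}^2]-2\left(\mathrm{E}[F_{n,1}F_{n,2}]\right)^2\ge 0$, whence $B\le B'$ with constant one. The paper does not prove this positivity from scratch either: it quotes \cite[Lemma 4.8]{chen2017fourth}, where the inequality follows from the product formula \eqref{Product_formula}, the isometry property, and a combinatorial analysis of the contraction kernels of the real and imaginary parts --- in effect the ``common master quantity'' computation you sketch in your contraction route, specialized to this particular pair of kernels. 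So nothing in your outline would fail; it is incomplete at exactly the point where the paper leans on a known lemma, and your caution that nonnegativity of $B$ and $B'$ alone cannot exclude the cancellation regime is justified: it is the special kernel structure coming from a single complex integral that forces the mixed cumulant to be nonnegative.
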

	
	\begin{proof}
		Assume $F_n=F_{n,1}+\mathrm{i}F_{n,2}$. According to \cite[Theorem 3.3]{chen2017fourth}, $\left\lbrace (F_{n,1},F_{n,2}): n\geq1 \right\rbrace $ actually is a sequence of two-dimensional random vectors of which components live in the $(p+q)$-th Wiener chaos of the real Gaussian isonormal process over $\mathfrak{H}\oplus\mathfrak{H}$. Combining Theorem \ref{Main result1} and the fact that 
		\begin{equation}
			M\left((F_{n,1},F_{n,2}) \right) \asymp M^{'}(F_n),
		\end{equation}
	which is from the following Lemma \ref{4real to complex} and Lemma \ref{3real to complex}, we get the conclusion.
	\end{proof}

Using the similar argument as in the proof of Theorem \ref{Main result2}, we can extend Theorem \ref{Main result2} to the case that the covariance matrix of $F_n$, denoted by $C_n$, converges to $C$ in the meaning of $\|C_n-C\|_{\mathrm{HS}}\rightarrow 0$ as $n\rightarrow\infty$.
\begin{Prop}
	 Let $\left\lbrace F_n= \mathcal{I}_{p,q}(f_n): n\geq1\right\rbrace$ be a sequence of complex Wiener-It\^o integrals, where $f_n\in \mathfrak{H}_{\mathbb{C}}^{\odot p}\otimes\mathfrak{H}_{\mathbb{C}}^{\odot q}$ and $p+q\geq 2$. Suppose that $C_n$, the covariance matrix of $F_n$, converges to $C$ in the meaning of $\|C_n-C\|_{\mathrm{HS}}\rightarrow 0$ as $n\rightarrow\infty$. 
	\begin{enumerate}[(i)]
		\item If $C$ is invertible, we set $F_n^{'}=C^{\frac{1}{2}}C_n^{-\frac{1}{2}}F_n$ and assume that $F_n^{'}$ converges in distribution to a complex normal random variable $Z$ with covariance matrix $C$. Then for $n$ large enough,
		\begin{equation}
			\rho(F_n^{'}, Z)\asymp M^{'}(F_n^{'}).
		\end{equation}
		\item If $C$ is not invertible, suppose that $\left\lbrace F_n:n\geq1 \right\rbrace $ is asymptotically close to normal. That is, $\rho(F_n,Z_n)\rightarrow0$, where $Z_n$ is a complex normal random variable of which covariance matrix coincides with that of $F_n$. Then for $n$ large enough,
		\begin{equation}
			\rho(F_n,Z_n)\asymp M^{'}(F_n).
		\end{equation}
	\end{enumerate}
\end{Prop}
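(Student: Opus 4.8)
The plan is to reproduce the reduction used in the proof of Theorem~\ref{Main result2}: pass from the complex integral to the real two-dimensional vector of its real and imaginary parts, apply the real vector-valued results already available, and transfer the conclusion through the equivalence $M\asymp M'$. First I would write $F_n=F_{n,1}+\mathrm{i}F_{n,2}$; by \cite[Theorem 3.3]{chen2017fourth} both $F_{n,1}$ and $F_{n,2}$ are real Wiener-It\^o integrals of order $p+q$ with respect to the real isonormal Gaussian process over $\mathfrak{H}\oplus\mathfrak{H}$, so the real vector $(F_{n,1},F_{n,2})$ has both components in the $(p+q)$-th Wiener chaos. By our conventions its covariance matrix is precisely $C_n$, and $\|C_n-C\|_{\mathrm{HS}}\to0$.

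For part (i), since $C$ is invertible and $C_n\to C$, for $n$ large enough $C_n$ is invertible, so $A_n:=C^{1/2}C_n^{-1/2}$ is a well-defined $2\times2$ real matrix. Because each component of $(F_{n,1},F_{n,2})$ lies in the $(p+q)$-th chaos, so does each component of $A_n(F_{n,1},F_{n,2})^{T}=(F_{n,1}',F_{n,2}')$, where $F_n'=F_{n,1}'+\mathrm{i}F_{n,2}'$. Using the symmetry of $C^{1/2}$ and $C_n^{-1/2}$, the covariance matrix of $(F_{n,1}',F_{n,2}')$ equals $A_nC_nA_n^{T}=C^{1/2}(C_n^{-1/2}C_nC_n^{-1/2})C^{1/2}=C$ exactly. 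Since the assumed convergence of the complex $F_n'$ to $Z$ is precisely $(F_{n,1}',F_{n,2}')\overset{d}{\to}\mathscr{N}_2(0,C)$, Theorem~\ref{Main result1} (with $d=2$ and chaos order $p+q$) gives $\rho(F_n',Z)\asymp M\big((F_{n,1}',F_{n,2}')\big)$, and it remains only to invoke $M\big((F_{n,1}',F_{n,2}')\big)\asymp M'(F_n')$.

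For part (ii), $C$ is singular, so no normalization is available; instead I would observe that, by definition of the complex distance, the asymptotic closeness $\rho(F_n,Z_n)\to0$ is literally the real-sense closeness $\rho\big((F_{n,1},F_{n,2}),(Z_{n,1},Z_{n,2})\big)\to0$. Applying the real vector-valued counterpart of the present proposition for a non-invertible limiting covariance (the analogue proved after Theorem~\ref{Main result1}) to $(F_{n,1},F_{n,2})$ yields $\rho(F_n,Z_n)\asymp M\big((F_{n,1},F_{n,2})\big)$, and here $F_n$ is a genuine $\mathcal{I}_{p,q}$-integral, so Lemma~\ref{4real to complex} and Lemma~\ref{3real to complex} give $M\big((F_{n,1},F_{n,2})\big)\asymp M'(F_n)$ directly, finishing this case.

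I expect the main obstacle to be the equivalence $M\asymp M'$ in part (i). Lemma~\ref{4real to complex} and Lemma~\ref{3real to complex} are stated for a complex integral $\mathcal{I}_{p,q}(f_n)$, whereas $F_n'$ is a real-linear mixing of $F_{n,1}$ and $F_{n,2}$ and need not be a single $(p',q')$-integral. The hard part will be to verify that the proofs of these lemmas use only that both $F_{n,1}'$ and $F_{n,2}'$ belong to one fixed real Wiener chaos: the third-order comparison is a linear identity between the centred third moments (which equal the third cumulants) and $\mathrm{E}[(F_n')^3]$, $\mathrm{E}[(F_n')^2\overline{F_n'}]$, while the fourth-order comparison additionally needs the fixed-chaos bounds controlling the off-diagonal fourth cumulants by $\max_i\kappa_{4e_i}$, all of which remain valid for $F_n'$. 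Finally, since $A_n=C^{1/2}C_n^{-1/2}\to I$, its entries are uniformly bounded for large $n$, so the comparison constants can be chosen independently of $n$, as the relation $\asymp$ demands.
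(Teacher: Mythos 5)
Your proposal is correct and follows essentially the paper's own route: the paper proves this proposition simply by invoking the same reduction as in Theorem~\ref{Main result2} (complex integral to the real two-dimensional vector of its parts via \cite[Theorem 3.3]{chen2017fourth}, then the real vector-valued proposition following Theorem~\ref{Main result1}, then the equivalence $M\asymp M'$ from Lemmas~\ref{4real to complex} and~\ref{3real to complex}). The obstacle you flag in part (i) is genuine and is glossed over by the paper---when $p\neq q$ the normalized variable $F_n'=\alpha_n F_n+\beta_n\bar{F}_n$ is a sum of a $(p,q)$- and a $(q,p)$-integral rather than a single $\mathcal{I}_{p,q}$-integral---but your resolution is sound: Lemma~\ref{3real to complex} is purely algebraic, and the only structural input in the proof of Lemma~\ref{4real to complex}, namely the non-negativity of the mixed cumulant $\kappa_{2e_1+2e_2}$, holds for any two elements $I_q(f),I_q(g)$ of the same real chaos and not only for the real and imaginary parts of a single complex integral as in the cited \cite[Lemma 4.8]{chen2017fourth}, since by the product formula and the symmetrization identity
\begin{equation}
	(2q)!\left\|f\tilde{\otimes}g\right\|_{\mathfrak{H}^{\otimes 2q}}^2=(q!)^2\sum_{r=0}^{q}\binom{q}{r}^2\left\|f\otimes_r g\right\|_{\mathfrak{H}^{\otimes(2q-2r)}}^2
\end{equation}
one gets $\mathrm{E}\left[I_q(f)^2I_q(g)^2\right]-\mathrm{E}\left[I_q(f)^2\right]\mathrm{E}\left[I_q(g)^2\right]-2\left(\mathrm{E}\left[I_q(f)I_q(g)\right]\right)^2\geq 0$ by retaining only the $r=0$ and $r=q$ terms.
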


In the following Lemma \ref{4real to complex} and Lemma \ref{3real to complex}, we prove that $M\left((F_{n,1},F_{n,2}) \right) \asymp M^{'}(F_n)$ for $F_n=\I_{p,q}(f_n)=F_{n,1}+\mathrm{i}F_{n,2}$ with $f_n\in \mathfrak{H}_{\mathbb{C}}^{\odot p}\otimes\mathfrak{H}_{\mathbb{C}}^{\odot q}$ and $p+q\geq 2$.

	\begin{Lemma}\label{4real to complex}
	For a complex Wiener-It\^o integral $ F= \mathcal{I}_{p,q}(f)=F_{1}+\mathrm{i}F_{2}$ with $f\in \mathfrak{H}_{\mathbb{C}}^{\odot p}\otimes\mathfrak{H}_{\mathbb{C}}^{\odot q}$ and $p+q\geq 2$, denote $\tilde{F}$ the two-dimensional random vector $\left( F_{1},F_{2}\right) $. Then
	\begin{equation*}
		\sum_{i=1}^{2}\kappa_{4e_i}(\tilde{F})\leq \mathrm{E}\left[\left| F \right| ^4 \right]-2\left(\mathrm{E}\left[ \left| F \right| ^2\right]  \right) ^2-\left| \mathrm{E}\left[ F^2\right] \right| ^2\leq c\sum_{i=1}^{2}\kappa_{4e_i}(\tilde{F}),
	\end{equation*}
	where $\sum_{i=1}^{2}\kappa_{4e_i}(\tilde{F})= \mathrm{E}\left[F_{1}^4 \right] -3\left( \mathrm{E}\left[ F_{1}^2\right] \right) ^2+\mathrm{E}\left[F_{2}^4 \right] -3\left( \mathrm{E}\left[ F_{2}^2\right] \right) ^2$ and $c>1$ is a positive constant only depending on $p+q$.
\end{Lemma}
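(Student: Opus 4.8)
The plan is to reduce the complex inequality to a statement about the real cumulants of $\tilde F=(F_1,F_2)$ by a direct expansion, and then to treat the two inequalities separately: the upper one via the $\Gamma$-estimates already established, the lower one (nonnegativity) via the positive-semidefinite structure of the contraction kernels. First I would record the algebraic identity. By \cite[Theorem 3.3]{chen2017fourth} the pair $\tilde F=(F_1,F_2)$ consists of two real Wiener--It\^o integrals of a common order $Q:=p+q$, say $F_i=I_Q(g_i)$. Since $|F|^2=F_1^2+F_2^2$ and $F^2=F_1^2-F_2^2+2\mathrm{i}F_1F_2$, expanding and collecting terms gives
\[
\mathrm{E}[|F|^4]-2(\mathrm{E}[|F|^2])^2-|\mathrm{E}[F^2]|^2=\kappa_{4e_1}(\tilde F)+\kappa_{4e_2}(\tilde F)+2\kappa_{2e_1+2e_2}(\tilde F),
\]
where, by Definition \ref{Def of cumulant}, $\kappa_{4e_i}(\tilde F)=\mathrm{E}[F_i^4]-3(\mathrm{E}[F_i^2])^2$ and $\kappa_{2e_1+2e_2}(\tilde F)=\mathrm{E}[F_1^2F_2^2]-\mathrm{E}[F_1^2]\mathrm{E}[F_2^2]-2(\mathrm{E}[F_1F_2])^2$. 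Thus the lemma is equivalent to the two bounds $0\le \kappa_{2e_1+2e_2}(\tilde F)$ and $2\kappa_{2e_1+2e_2}(\tilde F)\le (c-1)\sum_{i=1}^2\kappa_{4e_i}(\tilde F)$.

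For the upper bound I would simply invoke the results already proved. By Theorem \ref{cumulant and Gamma}, $\kappa_{2e_1+2e_2}(\tilde F)=3!\,\mathrm{E}[\Gamma_{e_1,e_1,e_2,e_2}(\tilde F)]$, so $|\kappa_{2e_1+2e_2}(\tilde F)|\le 6\,\mathrm{E}[|\Gamma_{e_1,e_1,e_2,e_2}(\tilde F)|]$, and the estimate \eqref{Gamma4} of Proposition \ref{estimation of Gamma} (applied with $q=Q$ and $d=2$) gives $|\kappa_{2e_1+2e_2}(\tilde F)|\le 6c_2(Q)\max_{i}\kappa_{4e_i}(\tilde F)\le 6c_2(Q)\sum_{i=1}^2\kappa_{4e_i}(\tilde F)$. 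Taking $c=1+12c_2(Q)$, which depends only on $Q=p+q$, yields the right-hand inequality; since $\kappa_{4e_i}(\tilde F)\ge0$ one may also take $c>1$, and the degenerate case $\sum_i\kappa_{4e_i}(\tilde F)=0$ forces the whole expression to vanish.

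The lower bound $\kappa_{2e_1+2e_2}(\tilde F)\ge0$ is the heart of the matter and the main obstacle. Here I would expand $\kappa_{2e_1+2e_2}(\tilde F)$ through \eqref{Kappa of integral}, which writes it as a finite sum of iterated contractions $\langle g_1\tilde{\otimes}_{r_2}g_1\tilde{\otimes}_{r_3}g_2,g_2\rangle$ subject to $r_2,r_3\ge1$ and $r_2+r_3=Q$; the constraint $r_i\ge1$ has already removed the full-contraction (``Gaussian'') parts. The plan is to reorganise this sum into a positive combination of terms of the form $\langle g_1\otimes_r g_1,g_2\otimes_r g_2\rangle$ and $\|g_1\otimes_r g_2\|^2$. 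Terms of the first type are nonnegative because $g_i\otimes_r g_i$ is the kernel of a positive semidefinite operator $M_i^{(r)}$ on $\mathfrak{H}^{\otimes(Q-r)}$ (its kernel is $\langle g_i(x,\cdot),g_i(y,\cdot)\rangle_{\mathfrak{H}^{\otimes r}}$), so that $\langle g_1\otimes_r g_1,g_2\otimes_r g_2\rangle=\operatorname{Tr}(M_1^{(r)}M_2^{(r)})\ge0$, while terms of the second type are squared norms.

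The delicate point, which I expect to be the only nonroutine part of the argument, is that the tildes cannot be removed termwise: $\langle g_1\tilde{\otimes}_r g_1,g_2\tilde{\otimes}_r g_2\rangle$ is not manifestly nonnegative, so the passage to the unsymmetrised, positive form must be carried out at the level of the entire connected sum (equivalently, using the connected-diagram representation of the fourth cumulant, in which each diagram produces an unsymmetrised contraction and all combinatorial weights are positive). An alternative route for this step, better adapted to the complex setting, is to expand $\mathrm{E}[|F|^4]=\mathrm{E}[(F\bar F)^2]$ directly by the complex product formula \eqref{complex product} and appeal to the nonnegative complex-contraction expansion of $\mathrm{E}[|F|^4]-2(\mathrm{E}[|F|^2])^2-|\mathrm{E}[F^2]|^2$ from \cite{chen2017fourth}; in either case, once nonnegativity is secured, combining it with the first-paragraph identity and the upper estimate of the second paragraph completes the proof.
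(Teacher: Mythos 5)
Your proposal is correct and takes essentially the same route as the paper: the same algebraic expansion reducing the middle quantity to $\sum_{i=1}^{2}\kappa_{4e_i}(\tilde F)+2\kappa_{2e_1+2e_2}(\tilde F)$, and the same upper bound via $\kappa_{2e_1+2e_2}(\tilde F)=3!\,\mathrm{E}\left[\Gamma_{e_1,e_1,e_2,e_2}(\tilde F)\right]$ combined with \eqref{Gamma4} of Proposition \ref{estimation of Gamma}, yielding the same constant $c=1+12c_2(p+q)$. For the lower bound the paper does precisely what you call the alternative route: it invokes the product-formula/isometry/combinatorics computation of \cite[Lemma 4.8]{chen2017fourth} to get $\kappa_{2e_1+2e_2}(\tilde F)\geq 0$, so your admittedly incomplete connected-diagram positivity sketch is not a gap in the proposal — it is exactly the content of that cited lemma, which both you and the paper are entitled to quote.
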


\begin{proof}
	Calculating directly, we get that
	\begin{align}
		&\mathrm{E}\left[\left| F \right| ^4 \right]-2\left(\mathrm{E}\left[ \left| F \right| ^2\right]  \right) ^2-\left| \mathrm{E}\left[ F^2\right] \right| ^2\\
		=&\,\mathrm{E}\left[F_{1}^{4}\right]-3\left(\mathrm{E}\left[F_{1}^{2}\right]\right)^{2}+\mathrm{E}\left[F_{2}^{4}\right]-3\left(\mathrm{E}\left[F_{2}^{2}\right]\right)^{2}\\
		&\,+2\left(\mathrm{E}\left[F_{1}^{2}F_{2}^{2} \right]-\mathrm{E}\left[ F_{1}^{2}\right] \mathrm{E}\left[ F_{2}^{2}\right]-2\left(\mathrm{E}\left[ F_{1}F_{2}\right]  \right) ^2  \right) .
	\end{align}
	According to \cite[Theorem 3.3]{chen2017fourth}, $F_1$ and $F_2$ are two $(p+q)$-th Wiener It\^o integrals with respect to the real Gaussian isonormal process over $\mathfrak{H}\oplus\mathfrak{H}$. Using product formula \eqref{Product_formula}, isometry properties and some combinatorics (see \cite[Lemma 4.8]{chen2017fourth}), we know that
	\begin{equation}
		2\left(\mathrm{E}\left[F_{1}^{2}F_{2}^{2} \right]-\mathrm{E}\left[ F_{1}^{2}\right] \mathrm{E}\left[ F_{2}^{2}\right]-2\left(\mathrm{E}\left[ F_{1}F_{2}\right]  \right) ^2\right) \geq0.
	\end{equation}
	Thus,
	\begin{equation}
		\mathrm{E}\left[\left| F \right| ^4 \right]-2\left(\mathrm{E}\left[ \left| F \right| ^2\right]  \right) ^2-\left| \mathrm{E}\left[ F^2\right] \right| ^2\geq \mathrm{E}\left[F_{1}^{4}\right]-3\left(\mathrm{E}\left[F_{1}^{2}\right]\right)^{2}+\mathrm{E}\left[F_{2}^{4}\right]-3\left(\mathrm{E}\left[F_{2}^{2}\right]\right)^{2}.
	\end{equation}
	On the other hand, by Theorem \ref{cumulant and Gamma} and Equation \eqref{Gamma4},
	\begin{align}
		&\mathrm{E}\left[F_{1}^{2}F_{2}^{2} \right]-\mathrm{E}\left[ F_{1}^{2}\right] \mathrm{E}\left[ F_{2}^{2}\right]-2\left(\mathrm{E}\left[ F_{1}F_{2}\right]  \right) ^2\\ = &\, \kappa_{2e_1+2e_2}(\tilde{F})=6\mathrm{E}\left[ \Gamma_{e_{1},e_{1},e_2,e_2}(\tilde{F})\right] 
		\\\leq &\, 6\mathrm{E}\left[\left|  \Gamma_{e_{1},e_{1},e_2,e_2}(\tilde{F})\right| \right]	
		\leq 6c_2(p+q)\sum_{i=1}^{2}\kappa_{4e_i}(\tilde{F}) ,
	\end{align}
	which means that 
	\begin{align}
		&\mathrm{E}\left[\left| F \right| ^4 \right]-2\left(\mathrm{E}\left[ \left| F \right| ^2\right]  \right) ^2-\left| \mathrm{E}\left[ F^2\right] \right| ^2\\\leq&\, (1+12c_2(p+q))\left(  \mathrm{E}\left[F_{1}^{4}\right]-3\left(\mathrm{E}\left[F_{1}^{2}\right]\right)^{2}+\mathrm{E}\left[F_{2}^{4}\right]-3\left(\mathrm{E}\left[F_{2}^{2}\right]\right)^{2}\right).
	\end{align}
	Then we complete the proof.
\end{proof}

\begin{Lemma}\label{3real to complex}
	For a complex random variable $ F=F_{1}+\mathrm{i}F_{2} $, it holds that
\begin{align}
	\frac{1}{8}\left( \left| \mathrm{E}\left[F^3 \right] \right| +\left|\mathrm{E}\left[F^2\bar{F} \right] \right|\right)  &\leq \left| \mathrm{E}\left[F_{1}^3 \right] \right|+	\left| \mathrm{E}\left[F_{2}^3 \right] \right| +\left| \mathrm{E}\left[F_{1}^2F_{2} \right] \right|+ \left| \mathrm{E}\left[F_{1}F_{2}^2 \right] \right|\\&\leq \sqrt{2}\left( \left| \mathrm{E}\left[F^3 \right] \right| +\left|\mathrm{E}\left[F^2\bar{F} \right] \right|\right) .
\end{align}
\end{Lemma}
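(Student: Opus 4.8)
The plan is to reduce the statement to an elementary finite-dimensional norm equivalence, since no probabilistic structure of $F$ beyond the existence of third moments is actually used. First I would expand the two complex third moments in terms of the real moments of $F_1$ and $F_2$. Writing $F=F_1+\mathrm{i}F_2$ and $\bar{F}=F_1-\mathrm{i}F_2$, a direct binomial expansion of $F^3=(F_1+\mathrm{i}F_2)^3$ and $F^2\bar{F}=(F_1+\mathrm{i}F_2)^2(F_1-\mathrm{i}F_2)$ gives
\begin{align}
	\mathrm{E}[F^3]&=\left(\mathrm{E}[F_1^3]-3\mathrm{E}[F_1F_2^2]\right)+\mathrm{i}\left(3\mathrm{E}[F_1^2F_2]-\mathrm{E}[F_2^3]\right),\\
	\mathrm{E}[F^2\bar{F}]&=\left(\mathrm{E}[F_1^3]+\mathrm{E}[F_1F_2^2]\right)+\mathrm{i}\left(\mathrm{E}[F_1^2F_2]+\mathrm{E}[F_2^3]\right).
\end{align}
Setting $a=\mathrm{E}[F_1^3]$, $b=\mathrm{E}[F_2^3]$, $c=\mathrm{E}[F_1^2F_2]$, $e=\mathrm{E}[F_1F_2^2]$, this exhibits the real and imaginary parts of $\mathrm{E}[F^3]$ and $\mathrm{E}[F^2\bar{F}]$ as an invertible linear image of $(a,b,c,e)$ which decouples into the two $2\times2$ blocks $(a,e)\mapsto(a-3e,a+e)$ and $(b,c)\mapsto(3c-b,b+c)$, each of determinant $\pm4$. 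The two asserted inequalities are then exactly the two sides of the resulting norm equivalence, made quantitative.

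For the left inequality I would simply apply the triangle inequality to the expansion above. Since $|\mathrm{E}[F^3]|\le|a-3e|+|3c-b|\le(|a|+3|e|)+(3|c|+|b|)$ and $|\mathrm{E}[F^2\bar{F}]|\le(|a|+|e|)+(|c|+|b|)$, and every coefficient is at most $3$, adding the two yields $|\mathrm{E}[F^3]|+|\mathrm{E}[F^2\bar{F}]|\le 4\left(|a|+|b|+|c|+|e|\right)$, which is in fact stronger than the required factor $1/8$ (it delivers the constant $1/4$).

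For the right inequality I would invert the two blocks explicitly. Writing $\mathrm{E}[F^3]=p_1+\mathrm{i}p_2$ and $\mathrm{E}[F^2\bar{F}]=q_1+\mathrm{i}q_2$, solving each block gives $a=(3q_1+p_1)/4$, $e=(q_1-p_1)/4$, $c=(p_2+q_2)/4$, $b=(3q_2-p_2)/4$. Summing absolute values and using the triangle inequality gives $4\left(|a|+|b|+|c|+|e|\right)\le 2\left(|p_1|+|p_2|\right)+4\left(|q_1|+|q_2|\right)$; then the elementary bound $|x|+|y|\le\sqrt{2}\,\sqrt{x^2+y^2}$ applied to $(p_1,p_2)$ and $(q_1,q_2)$ turns the right-hand side into $2\sqrt{2}\,|\mathrm{E}[F^3]|+4\sqrt{2}\,|\mathrm{E}[F^2\bar{F}]|\le 4\sqrt{2}\left(|\mathrm{E}[F^3]|+|\mathrm{E}[F^2\bar{F}]|\right)$, which after dividing by $4$ is precisely the claimed upper bound with constant $\sqrt{2}$.

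I do not expect a genuine obstacle here, as the whole argument is elementary algebra once the expansion is recorded; the only point requiring care is the bookkeeping of coefficients so that the stated constants $1/8$ and $\sqrt{2}$ come out correctly, together with the observation that the linear map relating the real and complex moment vectors is invertible (so that the two quantities are genuinely comparable). It is worth noting explicitly that the lemma is purely a statement about a single complex random variable with finite third moments and uses none of the Wiener--It\^o chaos structure, which is why the constants depend on nothing at all.
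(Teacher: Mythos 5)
Your proof is correct, and for the harder half it is genuinely simpler than the paper's. Both arguments rest on the same invertible linear relation between $\left(\mathrm{E}[F_1^3],\mathrm{E}[F_2^3],\mathrm{E}[F_1^2F_2],\mathrm{E}[F_1F_2^2]\right)$ and the real and imaginary parts of $\left(\mathrm{E}[F^3],\mathrm{E}[F^2\bar F]\right)$, and your treatment of the upper bound (constant $\sqrt2$) coincides with the paper's: invert the relation, apply the triangle inequality, then use $|x|+|y|\le\sqrt2\sqrt{x^2+y^2}$. The difference is in the lower bound. The paper works only with the inverted expressions $F_1^3=\frac{1}{4}\left(\mathrm{Re}\left(F^3\right)+3\,\mathrm{Re}\left(F^2\bar F\right)\right)$ and its three companions, so to bound the sum of real third moments from below it must invoke the reverse triangle inequality $\big||x|-|y|\big|\le|x\pm y|$ and then dispose of nine separate cases according to the relative sizes of $\left|\mathrm{Re}\,\mathrm{E}\left[F^3\right]\right|$ versus $3\left|\mathrm{Re}\,\mathrm{E}\left[F^2\bar F\right]\right|$ and of the corresponding imaginary parts. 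You instead use the forward expansion $\mathrm{E}[F^3]=(a-3e)+\mathrm{i}(3c-b)$, $\mathrm{E}[F^2\bar F]=(a+e)+\mathrm{i}(c+b)$, so the lower bound becomes a one-line triangle inequality $\left|\mathrm{E}[F^3]\right|+\left|\mathrm{E}[F^2\bar F]\right|\le 4\left(|a|+|b|+|c|+|e|\right)$ with no case analysis at all; this also yields the sharper constant $1/4$ in place of the paper's $1/8$. The structural point your proof makes explicit---that each direction of a linear norm equivalence is most easily proved from the corresponding direction of the linear map---is exactly what the paper's case analysis obscures. Your closing observation is also accurate: the lemma concerns an arbitrary complex random variable with finite third moments and uses none of the Wiener--It\^o chaos structure, which is why the constants are absolute.
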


\begin{proof}
	Calculating directly, we have that
	\begin{align}
		F_{1}^{3}&=\left( \frac{F+\bar{F}}{2}\right) ^3=\frac{1}{4}\left(\mathrm{Re}\left( F^3\right) +3\mathrm{Re}\left( F^2\bar{F}\right) \right), \\
		F_{2}^{3}&=\left( \frac{F-\bar{F}}{2\mathrm{i}}\right) ^3=\frac{1}{4}\left(-\mathrm{Im}\left(F^3\right) +3\mathrm{Im}\left( F^2\bar{F}\right)  \right),\\
		F_{1}^2F_{2}&=\left( \frac{F+\bar{F}}{2}\right) ^2\left( \frac{F-\bar{F}}{2\mathrm{i}}\right)=\frac{1}{4}\left(\mathrm{Im}\left( F^3\right) +\mathrm{Im}\left( F^2\bar{F}\right)  \right),\\
		F_{1}F_{2}^2&=\left( \frac{F+\bar{F}}{2}\right) \left( \frac{F-\bar{F}}{2\mathrm{i}}\right)^2=\frac{1}{4}\left(-\mathrm{Re}\left( F^3\right) +\mathrm{Re}\left( F^2\bar{F}\right)  \right),
	\end{align}
	where $\mathrm{Re}(z)$ and $\mathrm{Im}(z)$ denote the real and imaginary parts of a complex number $z$, respectively. Thus,
	\begin{align}
		\left| \mathrm{E}\left[ F_{1}^{3}\right] \right| & =\frac{1}{4}\left| \mathrm{Re}\left( \mathrm{E}\left[ F^3\right] \right) +3\mathrm{Re}\left( \mathrm{E}\left[F^2\bar{F}\right]\right)   \right| ,\\
		\left| \mathrm{E}\left[ F_{2}^{3}\right] \right| & =\frac{1}{4}\left| \mathrm{Im}\left( \mathrm{E}\left[ F^3\right] \right) -3\mathrm{Im}\left( \mathrm{E}\left[F^2\bar{F}\right]\right)   \right|, \\
		\left| \mathrm{E}\left[F_{1}^{2}F_{2}\right] \right| & =\frac{1}{4}\left| \mathrm{Im}\left( \mathrm{E}\left[ F^3\right] \right) +\mathrm{Im}\left( \mathrm{E}\left[F^2\bar{F}\right] \right)  \right|, \\
		\left| \mathrm{E}\left[ F_{1}F_{2}^{2}\right] \right| & =\frac{1}{4}\left| \mathrm{Re}\left( \mathrm{E}\left[ F^3\right] \right) -\mathrm{Re}\left( \mathrm{E}\left[F^2\bar{F}\right]\right)   \right| .
	\end{align}
	Then
	\begin{align}
		&\left| \mathrm{E}\left[F_{1}^3 \right] \right|+	\left| \mathrm{E}\left[F_{2}^3 \right] \right| +\left| \mathrm{E}\left[F_{1}^2F_{2} \right] \right|+ \left| \mathrm{E}\left[F_{1}F_{2}^2 \right] \right|\\
		\leq &\, \frac{1}{2}\left( \left| \mathrm{Re}\left( \mathrm{E}\left[ F^3\right] \right)\right|+\left| \mathrm{Im}\left( \mathrm{E}\left[ F^3\right] \right)\right| \right)+\left( \left| \mathrm{Re}\left( \mathrm{E}\left[F^2\bar{F}\right]\right)   \right|+\left| \mathrm{Im}\left( \mathrm{E}\left[F^2\bar{F}\right]\right)   \right|\right)  \\
		\leq &\, \sqrt{2}\left(\left|\mathrm{E}\left[F^3\right] \right|+ \left|\mathrm{E}\left[F^2\bar{F}\right] \right| \right) ,
	\end{align}
	where the first inequality is from  triangle inequality $|x\pm y|\leq |x|+|y|$ for $x,y \in \mathbb{R}$ and the second inequality is by the fact that $|x|+|y|\leq \sqrt{2}\sqrt{x^2+y^2}=\sqrt{2}|z|$ for a complex number $z=x+\mathrm{i}y$.
	
	Next, by triangle inequality $\big||x|-|y|\big|\leq |x\pm y|$ for $x,y \in \mathbb{R}$, we get that 
	\begin{align}
		&\left| \mathrm{E}\left[F_{1}^3 \right] \right|+	\left| \mathrm{E}\left[F_{2}^3 \right] \right| +\left| \mathrm{E}\left[F_{1}^2F_{2} \right] \right|+ \left| \mathrm{E}\left[F_{1}F_{2}^2 \right] \right|\\
		\geq &\, \frac{1}{4}\left(
		\Big| \left|\mathrm{Re}\left(  \mathrm{E}\left[ F^3\right] \right)\right|  -3\left| \mathrm{Re}\left( \mathrm{E}\left[F^2\bar{F}\right]\right)   \right|\Big| 
		+\Big| \left| \mathrm{Im}\left( \mathrm{E}\left[ F^3\right] \right)\right|  -3\left| \mathrm{Im}\left( \mathrm{E}\left[F^2\bar{F}\right]\right)   \right|\Big|\right. \\&\left.  
		\,+\Big| \left| \mathrm{Im}\left( \mathrm{E}\left[ F^3\right] \right) \right| -\left| \mathrm{Im}\left( \mathrm{E}\left[F^2\bar{F}\right] \right)  \right|\Big| 
		+\Big| \left| \mathrm{Re}\left( \mathrm{E}\left[ F^3\right] \right)\right|  -\left| \mathrm{Re}\left( \mathrm{E}\left[F^2\bar{F}\right]\right)   \right|\Big| 	\right)
	\end{align}
	Now we consider all cases:
	\begin{align}
		\textbf{Case1: } &\left|\mathrm{Re}\left(  \mathrm{E}\left[ F^3\right] \right)\right| \geq 3\left| \mathrm{Re}\left( \mathrm{E}\left[F^2\bar{F}\right]\right)   \right|, \left| \mathrm{Im}\left( \mathrm{E}\left[ F^3\right] \right)\right| \geq 3\left| \mathrm{Im}\left( \mathrm{E}\left[F^2\bar{F}\right]\right)   \right|. \\
		\textbf{Case2: } &\left|\mathrm{Re}\left(  \mathrm{E}\left[ F^3\right] \right)\right| \geq 3\left| \mathrm{Re}\left( \mathrm{E}\left[F^2\bar{F}\right]\right)   \right|, \\&\left| \mathrm{Im}\left( \mathrm{E}\left[F^2\bar{F}\right]\right)   \right|\leq \left| \mathrm{Im}\left( \mathrm{E}\left[ F^3\right] \right)\right| < 3\left| \mathrm{Im}\left( \mathrm{E}\left[F^2\bar{F}\right]\right)   \right|.\\
		\textbf{Case3: } &\left|\mathrm{Re}\left(  \mathrm{E}\left[ F^3\right] \right)\right| \geq 3\left| \mathrm{Re}\left( \mathrm{E}\left[F^2\bar{F}\right]\right)   \right|, \left| \mathrm{Im}\left( \mathrm{E}\left[ F^3\right] \right)\right|  < \left| \mathrm{Im}\left( \mathrm{E}\left[F^2\bar{F}\right]\right)   \right|.\\
		\textbf{Case4: } &\left| \mathrm{Re}\left( \mathrm{E}\left[F^2\bar{F}\right]\right)   \right|\leq \left|\mathrm{Re}\left(  \mathrm{E}\left[ F^3\right] \right)\right| < 3\left| \mathrm{Re}\left( \mathrm{E}\left[F^2\bar{F}\right]\right)   \right|,\\ &\left| \mathrm{Im}\left( \mathrm{E}\left[ F^3\right] \right)\right| \geq 3\left| \mathrm{Im}\left( \mathrm{E}\left[F^2\bar{F}\right]\right)   \right|.\\
		\textbf{Case5: } &\left| \mathrm{Re}\left( \mathrm{E}\left[F^2\bar{F}\right]\right)   \right|\leq \left|\mathrm{Re}\left(  \mathrm{E}\left[ F^3\right] \right)\right| < 3\left| \mathrm{Re}\left( \mathrm{E}\left[F^2\bar{F}\right]\right)   \right|, \\&\left| \mathrm{Im}\left( \mathrm{E}\left[F^2\bar{F}\right]\right)   \right|\leq \left| \mathrm{Im}\left( \mathrm{E}\left[ F^3\right] \right)\right| < 3\left| \mathrm{Im}\left( \mathrm{E}\left[F^2\bar{F}\right]\right)   \right|.\\
		\textbf{Case6: } &\left| \mathrm{Re}\left( \mathrm{E}\left[F^2\bar{F}\right]\right)   \right|\leq \left|\mathrm{Re}\left(  \mathrm{E}\left[ F^3\right] \right)\right| < 3\left| \mathrm{Re}\left( \mathrm{E}\left[F^2\bar{F}\right]\right)   \right|, \\&\left| \mathrm{Im}\left( \mathrm{E}\left[ F^3\right] \right)\right|  < \left| \mathrm{Im}\left( \mathrm{E}\left[F^2\bar{F}\right]\right)   \right|.\\
		\textbf{Case7: }& \left|\mathrm{Re}\left(  \mathrm{E}\left[ F^3\right] \right)\right|  < \left| \mathrm{Re}\left( \mathrm{E}\left[F^2\bar{F}\right]\right)   \right|, \left| \mathrm{Im}\left( \mathrm{E}\left[ F^3\right] \right)\right| \geq 3\left| \mathrm{Im}\left( \mathrm{E}\left[F^2\bar{F}\right]\right)   \right|.\\
		\textbf{Case8: }& \left|\mathrm{Re}\left(  \mathrm{E}\left[ F^3\right] \right)\right|  < \left| \mathrm{Re}\left( \mathrm{E}\left[F^2\bar{F}\right]\right)   \right|,\\ &\left| \mathrm{Im}\left( \mathrm{E}\left[F^2\bar{F}\right]\right)   \right|\leq \left| \mathrm{Im}\left( \mathrm{E}\left[ F^3\right] \right)\right| < 3\left| \mathrm{Im}\left( \mathrm{E}\left[F^2\bar{F}\right]\right)   \right|.\\
		\textbf{Case9: } &\left|\mathrm{Re}\left(  \mathrm{E}\left[ F^3\right] \right)\right|  < \left| \mathrm{Re}\left( \mathrm{E}\left[F^2\bar{F}\right]\right)   \right|, \left| \mathrm{Im}\left( \mathrm{E}\left[ F^3\right] \right)\right|  < \left| \mathrm{Im}\left( \mathrm{E}\left[F^2\bar{F}\right]\right)   \right|.
	\end{align}
	In \textbf{Case1},
	\begin{align}
		&\Big| \left|\mathrm{Re}\left(  \mathrm{E}\left[ F^3\right] \right)\right|  -3\left| \mathrm{Re}\left( \mathrm{E}\left[F^2\bar{F}\right]\right)   \right|\Big| 
		+\Big| \left| \mathrm{Im}\left( \mathrm{E}\left[ F^3\right] \right)\right|  -3\left| \mathrm{Im}\left( \mathrm{E}\left[F^2\bar{F}\right]\right)   \right|\Big| \\&\,+\Big| \left| \mathrm{Im}\left( \mathrm{E}\left[ F^3\right] \right) \right| -\left| \mathrm{Im}\left( \mathrm{E}\left[F^2\bar{F}\right] \right)  \right|\Big| 
		+\Big| \left| \mathrm{Re}\left( \mathrm{E}\left[ F^3\right] \right)\right|  -\left| \mathrm{Re}\left( \mathrm{E}\left[F^2\bar{F}\right]\right)   \right|\Big| \\
		=&\, \left|\mathrm{Re}\left(  \mathrm{E}\left[ F^3\right] \right)\right|  -3\left| \mathrm{Re}\left( \mathrm{E}\left[F^2\bar{F}\right]\right)   \right|+ \left| \mathrm{Im}\left( \mathrm{E}\left[ F^3\right] \right)\right|  -3\left| \mathrm{Im}\left( \mathrm{E}\left[F^2\bar{F}\right]\right)   \right|\\&\,+\left| \mathrm{Im}\left( \mathrm{E}\left[ F^3\right] \right) \right| -\left| \mathrm{Im}\left( \mathrm{E}\left[F^2\bar{F}\right] \right)  \right|+\left| \mathrm{Re}\left( \mathrm{E}\left[ F^3\right] \right)\right|  -\left| \mathrm{Re}\left( \mathrm{E}\left[F^2\bar{F}\right]\right)   \right|
		\\=&\, 2\left(\left| \mathrm{Re}\left( \mathrm{E}\left[ F^3\right] \right)\right|+\left| \mathrm{Im}\left( \mathrm{E}\left[ F^3\right] \right)\right| \right) -4\left(\left| \mathrm{Re}\left( \mathrm{E}\left[F^2\bar{F}\right]\right)   \right|+\left| \mathrm{Im}\left( \mathrm{E}\left[F^2\bar{F}\right]\right)   \right| \right) \\
		\geq&\, \frac{1}{2}\left(\left| \mathrm{Re}\left( \mathrm{E}\left[ F^3\right] \right)\right|+\left| \mathrm{Im}\left( \mathrm{E}\left[ F^3\right] \right)\right| \right) +\frac{1}{2}\left(\left| \mathrm{Re}\left( \mathrm{E}\left[F^2\bar{F}\right]\right)   \right|+\left| \mathrm{Im}\left( \mathrm{E}\left[F^2\bar{F}\right]\right)   \right| \right)\\
		\geq&\,\frac{1}{2}\left(\left|\mathrm{E}\left[F^3\right] \right|+ \left|\mathrm{E}\left[F^2\bar{F}\right] \right| \right),
	\end{align}
	where the penultimate inequality is from the conditions $\left|\mathrm{Re}\left(  \mathrm{E}\left[ F^3\right] \right)\right| \geq 3\left| \mathrm{Re}\left( \mathrm{E}\left[F^2\bar{F}\right]\right)   \right|$ and $\left| \mathrm{Im}\left( \mathrm{E}\left[ F^3\right] \right)\right| \geq 3\left| \mathrm{Im}\left( \mathrm{E}\left[F^2\bar{F}\right]\right)   \right|$, and the last inequality is by the fact that $|x|+|y|\geq \sqrt{x^2+y^2}=|z|$ for a complex number $z=x+\mathrm{i}y$ with $x,y\in \mathbb{R}$.
	
	By a similar argument, we can also obtain that
	\begin{align}
		&\Big| \left|\mathrm{Re}\left(  \mathrm{E}\left[ F^3\right] \right)\right|  -3\left| \mathrm{Re}\left( \mathrm{E}\left[F^2\bar{F}\right]\right)   \right|\Big| 
		+\Big| \left| \mathrm{Im}\left( \mathrm{E}\left[ F^3\right] \right)\right|  -3\left| \mathrm{Im}\left( \mathrm{E}\left[F^2\bar{F}\right]\right)   \right|\Big| \\&  
		+\Big| \left| \mathrm{Im}\left( \mathrm{E}\left[ F^3\right] \right) \right| -\left| \mathrm{Im}\left( \mathrm{E}\left[F^2\bar{F}\right] \right)  \right|\Big| 
		+\Big| \left| \mathrm{Re}\left( \mathrm{E}\left[ F^3\right] \right)\right|  -\left| \mathrm{Re}\left( \mathrm{E}\left[F^2\bar{F}\right]\right)   \right|\Big| \\
		\geq&\,\frac{1}{2}\left(\left|\mathrm{E}\left[F^3\right] \right|+ \left|\mathrm{E}\left[F^2\bar{F}\right] \right| \right),
	\end{align}
	is valid in all other eight cases. Then the proof is completed.
\end{proof}
	
		As an example, we consider a complex Ornstein-Uhlenbeck process defined by the stochastic differential equation 
	\begin{equation}\label{SDE}
		\mathrm{d}Z_t=-\gamma Z_t\mathrm{d}t+ \mathrm{d}\zeta_t, \quad t\geq0,
	\end{equation}
	where $Z_t$ is a complex-valued process, $Z_0=0$, $\gamma \in\mathbb{C}$ is unknown, $\lambda:=\mathrm{Re}\;\gamma > 0$, and $\zeta_t$ is a complex Brownian motion. That is $\zeta_t =\frac{ B_t^1+\i B_t^2}{\sqrt2}$, where
	$(B_t^1, B_t^2)$ is a two-dimensional standard Brownian motion. Suppose that only one trajectory $\left( Z_t, 0\leq t\leq T\right) $ can be observed. By minimizing $\int_{0}^{T}\left|\dot{Z}_{t}+\gamma Z_{t}\right|^{2} \mathrm{d} t$, one can obtain a least squares estimator of $\gamma$ as follows,
	\begin{equation}
		\hat{\gamma}_{T}=-\frac{\int_{0}^{T} \bar{Z}_{t} \mathrm{d} Z_{t}}{\int_{0}^{T}\left|Z_{t}\right|^{2} \mathrm{d} t}=\gamma- \frac{\int_{0}^{T} \bar{Z}_{t} \mathrm{d} \zeta_{t}}{\int_{0}^{T}\left|Z_{t}\right|^{2} \mathrm{d} t}.
	\end{equation}
	In \cite{ChenHuWang2017}, Chen, Hu and Wang proved that $\sqrt{T}\left(\hat{\gamma}_{T}-\gamma\right)$ is asymptotically normal. Namely, as $ T \rightarrow \infty$,
	\begin{equation}
		\sqrt{T}\left[\hat{\gamma}_{T}-\gamma\right]=-\frac{\frac{1}{\sqrt{T}}\int_{0}^{T} \bar{Z}_{t} \mathrm{d} \zeta_{t}}{\frac{1}{T}\int_{0}^{T}\left|Z_{t}\right|^{2} \mathrm{d} t} \overset{d}{\rightarrow} \mathcal{N}_2\left(0, \lambda\mathrm{Id}_2\right),
	\end{equation}
	where $\mathrm{Id}_2$ denotes $2\times2$ identity matrix.
	They showed that denominator satisfies
	\begin{equation}
		\frac{1}{T}\int_{0}^{T}\left|Z_{t}\right|^{2} \mathrm{d} t\overset{a.s.}{\rightarrow}\frac{1}{2\lambda},
	\end{equation} 
	and for numerator $F_T:=\frac{1}{\sqrt{T}}\int_{0}^{T} \bar{Z}_{t} \mathrm{d} \zeta_{t}$,
	\begin{equation}\label{example}
		\left( F_{1,T},F_{2,T}\right)=\left( \mathrm{Re}\left( F_T\right) ,\mathrm{Im}\left( F_T\right)\right)  \overset{d}{\rightarrow}\mathcal{N}_2\left(0, \frac{1}{4\lambda}\mathrm{Id}_2\right).
	\end{equation} 
	Then the asymptotic normality of the estimator $\hat{\gamma}_{T}$ is obtained. One should note that, in \cite{ChenHuWang2017}, the complex Ornstein-Uhlenbeck process considered by Chen, Hu and Wang is driven by a complex fractional Brownian motion with Hurst parameter belonging to $\left[ \frac{1}{2},\frac{3}{4}\right) $. The case that noise is a complex fractional Brownian motion involves more complicated calculations and more precise estimations. Here, to demonstrate the availability of our techniques, we focus on the case in which the noise is a complex standard Brownian motion. Next we will derive that $\frac{1}{\sqrt{T}}$ is the optimal rate of convergence for the numerator $F_T$. We have no idea to handle the optimal rate of convergence for the statistic $\sqrt{T}\left[\hat{\gamma}_{T}-\gamma\right]$, although we conjecture that it is still $\frac{1}{\sqrt{T}}$. Note that Kim and Park in \cite{KIM2017413,KIM2017284} obtained that $\frac{1}{\sqrt{T}}$ is the optimal Berry-Esseen bound for normal approximation of the least squares estimator of the drift coefficient of the real-valued one-dimensional Ornstein-Uhlenbeck process driven by a standard Brownian motion. As they stated in \cite{KIM2017284}, in many situations encountered in statistics, one need to consider the rate of convergence for the sequences $F_n/G_n$ with $G_n > 0$ almost surely (such as $\sqrt{T}\left[\hat{\gamma}_{T}-\gamma\right]$). Therefore, we shall deal with the optimal rate of convergence for the statistic $\sqrt{T}\left[\hat{\gamma}_{T}-\gamma\right]$ in separate project.

	Define the Hilbert space $\mathfrak{H}=L^2\left( \left[ 0,+\infty\right) \right) $ with inner product $\left\langle  f,g\right\rangle_{\mathfrak{H}}= \int_{0}^{\infty}  f(t) g(t)  \mathrm{d} t$. We complexify $\mathfrak{H}$ in the usual way and denote by $\mathfrak{H}_{\mathbb{C}}$. For any $f,g\in\mathfrak{H}_{\mathbb{C}}$, $\left\langle  f,g\right\rangle_{\mathfrak{H}_{\mathbb{C}}}= \int_{0}^{\infty}  f(t) \overline{g(t)} \mathrm{d} t$. Given $f\in\mathfrak{H}_{\mathbb{C}}^{\odot a}\otimes\mathfrak{H}_{\mathbb{C}}^{\odot b}$, $g\in\mathfrak{H}_{\mathbb{C}}^{\odot c}\otimes\mathfrak{H}_{\mathbb{C}}^{\odot d}$, for $i=0,\dots,a\land d$, $j=0,\dots,b\land c$, the $(i,j)$-th contraction of $f$ and $g$ is the element of $\mathfrak{H}_{\mathbb{C}}^{\odot (a+c-i-j)}\otimes\mathfrak{H}_{\mathbb{C}}^{\odot (b+d-i-j)}$ defined by
	\begin{align}
		&f \otimes_{i, j} g\left(t_{1}, \ldots, t_{a+c-i-j} ; s_{1}, \ldots, s_{b+d-i-j}\right) \\
		=&\,\int_{\mathbb{R}_{+}^{2 l}} f\left(t_{1}, \ldots, t_{a-i}, u_{1}, \ldots, u_{i} ; s_{1} \ldots, s_{b-j}, v_{1} ,\ldots, v_{j}\right) \\
		&\qquad \quad g\left(t_{a-i+1}, \ldots, t_{p-l}, v_{1}, \ldots, v_{j}; s_{b-j+1}, \ldots, s_{q-l}, u_{1}, \ldots, u_{i}\right) \mathrm{d} \vec{u} \mathrm{d} \vec{v} ,
	\end{align}
	where $l=i+j, p=a+c, q=b+d, \vec{u}=\left(u_{1}, \ldots, u_{i}\right)$ and $\vec{v}=\left(v_{1}, \ldots, v_{j}\right)$.
	
	According to \eqref{SDE}, we know that
	\begin{equation}\label{F_T}
		F_T=\frac{1}{\sqrt{T}}\int_{0}^{T} \bar{Z}_{t} \mathrm{d} \zeta_{t}=\frac{1}{\sqrt{T}}\int_{0}^{T} \int_{0}^{T}e^{-\bar{\gamma}(t-s)}\mathbf{1}_{\left\lbrace 0\leq s\leq t\leq T\right\rbrace} \mathrm{d} \zeta_{t}\mathrm{d}\bar{\zeta}_s=I_{1,1}(\frac{1}{\sqrt{T}}\psi_{T}(t,s)),
	\end{equation}
	where
	\begin{equation}
		\psi_{T}(t,s)= e^{-\bar{\gamma}(t-s)}\mathbf{1}_{\left\lbrace 0\leq s\leq t\leq T\right\rbrace},
	\end{equation}
and $\mathbf{1}_{E}$ is the indicator function of a set $E$.
	Let
	\begin{equation}
		h_{T}(t,s)=\overline{\psi_{T}(s,t)}= e^{-\gamma(s-t)}\mathbf{1}_{\left\lbrace 0\leq t\leq s\leq T\right\rbrace},
	\end{equation}
	then
	\begin{equation}
		\bar{F_T}=I_{1,1}(\frac{1}{\sqrt{T}}h_{T}(t,s)).
	\end{equation}
	By isometry property of complex Wiener It\^o integral, we obtain that
	\begin{equation}
		\begin{aligned}
			\mathrm{E}\left[ F_T^2\right] &=\mathrm{E}\left[ F_T\bar{\bar{F_T}}\right] =\mathrm{E}(I_{1,1}(\frac{1}{\sqrt{T}}\psi_{T})\overline{I_{1,1}(\frac{1}{\sqrt{T}}h_T)})=\frac{1}{T}\left\langle \psi_{T},h_T \right\rangle _{\mathfrak{H}^{\otimes 2}_{\mathbb{C}}}\\
			&=\frac{1}{T}\int_{0}^{\infty}\int_{0}^{\infty} \psi_{T}\left(t,s \right)  \overline{h_{T}\left( t,s\right) }\mathrm{d}t\mathrm{d}s\\
			&=\frac{1}{T}\int_{0}^{\infty}\int_{0}^{\infty} e^{-\bar{\gamma}(t-s)}\mathbf{1}_{\left\lbrace 0\leq s\leq t\leq T\right\rbrace} e^{-\bar{\gamma}(s-t)}\mathbf{1}_{\left\lbrace 0\leq t\leq s\leq T\right\rbrace}\mathrm{d}t\mathrm{d}s=0,
		\end{aligned}
	\end{equation}
	and
	\begin{equation}
		\begin{aligned}
			\mathrm{E}\left[ |F_T|^2\right] &=\mathrm{E}\left[ F_T\bar{F_T}\right] =\mathrm{E}(I_{1,1}(\frac{1}{\sqrt{T}}\psi_{T})\overline{I_{1,1}(\frac{1}{\sqrt{T}}\psi_{T})})=\frac{1}{T}\left\langle \psi_{T},\psi_{T} \right\rangle _{\mathfrak{H}^{\otimes 2}_{\mathbb{C}}}\\
			&=\frac{1}{T}\int_{0}^{\infty}\int_{0}^{\infty} \psi_{T}\left(t,s \right)  \overline{\psi_{T}\left( t,s\right) }\mathrm{d}t\mathrm{d}s
			\\&=\frac{1}{T}\int_{0}^{\infty}\int_{0}^{\infty} e^{-\bar{\gamma}(t-s)}\mathbf{1}_{\left\lbrace 0\leq s\leq t\leq T\right\rbrace} e^{-\gamma(t-s)}\mathbf{1}_{\left\lbrace 0\leq s\leq t\leq T\right\rbrace}\mathrm{d}t\mathrm{d}s
			\\&=\frac{1}{T}\int_{0}^{T}\int_{0}^{t}e^{-2\lambda(t-s)}\mathrm{d}s\mathrm{d}t=\frac{1}{2\lambda}+\frac{1}{4\lambda^2T}e^{-2\lambda T}-\frac{1}{4\lambda^2T}
			\\&\rightarrow\frac{1}{2\lambda}\text{ as }T\rightarrow\infty.  
		\end{aligned}
	\end{equation}
	Since $\lim\limits_{T\rightarrow \infty}\left( 1+\frac{1}{2\lambda T}e^{-2\lambda T}-\frac{1}{2\lambda T}\right)=1 $, for sufficient large $T$, $1+\frac{1}{2\lambda T}e^{-2\lambda T}-\frac{1}{2\lambda T}>0$. Consider 
	\begin{equation}
		F_T^{'}=\left(1+\frac{1}{2\lambda T}e^{-2\lambda T}-\frac{1}{2\lambda T} \right) ^{-\frac{1}{2}}F_T.
	\end{equation}
	Then the covariance matrix of $F_T^{'}$ is equal to $\frac{1}{4\lambda}\mathrm{Id}_2$. Now we consider the optimal rate of convergence of $F_T^{'}$ to a complex normal random variable $Z$ with the covariance matrix $\frac{1}{4\lambda}\mathrm{Id}_2$ under the distance $\rho(F_T^{'},Z)$ as $T\rightarrow\infty$.
	
	\begin{Thm}
		$F_T^{'}$ converges in distribution to a complex normal random variable $Z$ with the covariance matrix $\frac{1}{4\lambda}\mathrm{Id}_2$ and there exist two finite constants $0<c_1<c_{2}$ not depending on $T$ such that for $T$ large enough,
		\begin{equation}
			c_1 \frac{1}{\sqrt{T}} \leq \rho\left(F_T^{'}, Z\right) \leq c_{2} \frac{1}{\sqrt{T}}.
		\end{equation}
	\end{Thm}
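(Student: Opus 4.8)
The plan is to invoke Theorem \ref{Main result2} for the $(1,1)$-integral $F_T^{'}=c_T F_T$, where $c_T=\bigl(1+\tfrac{1}{2\lambda T}e^{-2\lambda T}-\tfrac{1}{2\lambda T}\bigr)^{-1/2}\to 1$, and then reduce everything to estimating $M^{'}(F_T)$. First I would settle the convergence in distribution: \eqref{example} gives $(F_{1,T},F_{2,T})\overset{d}{\to}\mathscr N_2(0,\tfrac{1}{4\lambda}\mathrm{Id}_2)$, and since $c_T\to 1$, Slutsky's theorem yields $F_T^{'}\overset{d}{\to}Z$. As $F_T^{'}$ has covariance $\tfrac{1}{4\lambda}\mathrm{Id}_2$ for every $T$ by construction, Theorem \ref{Main result2} applies and gives $\rho(F_T^{'},Z)\asymp M^{'}(F_T^{'})$. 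Because $c_T$ is real and bounded away from $0$ and $\infty$, $M^{'}(F_T^{'})\asymp M^{'}(F_T)$, so the whole statement reduces to proving $M^{'}(F_T)\asymp T^{-1/2}$.

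To compute the three quantities defining $M^{'}(F_T)$, write $g=T^{-1/2}\psi_T$ and expand $F_T^2$ by the complex product formula \eqref{complex product}. Since the constant term equals $\mathrm{E}[F_T^2]=0$ (already established), one gets
\begin{equation}
F_T^2=\I_{2,2}(g\otimes g)+\I_{1,1}\bigl(g\otimes_{1,0}g+g\otimes_{0,1}g\bigr).
\end{equation}
Pairing with $F_T=\I_{1,1}(g)$ and with $\bar F_T=\I_{1,1}(T^{-1/2}h_T)$ and extracting the scalar ($\I_{0,0}$) part---the $\I_{2,2}$ term never contributes, as the index balance $2+1-i-j=0$ is infeasible for $i,j\le 1$---expresses both third-order quantities as a single full contraction. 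The crucial observation is that for $\mathrm{E}[F_T^3]$ the two surviving indicator factors are $\mathbf 1_{\{v\le u\}}\mathbf 1_{\{u\le v\}}$, which vanish almost everywhere, so $\mathrm{E}[F_T^3]=0$; whereas for $\mathrm{E}[F_T^2\bar F_T]$ the indicators are compatible and, using $\gamma+\bar\gamma=2\lambda$,
\begin{equation}
\mathrm{E}\bigl[F_T^2\bar F_T\bigr]=\frac{2}{T^{3/2}}\int_0^T\!\!\int_0^u (u-v)\,e^{-2\lambda(u-v)}\,\mathrm{d}v\,\mathrm{d}u .
\end{equation}

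Next I would read off the orders. In the last display the inner integral converges to $\int_0^\infty r\,e^{-2\lambda r}\,\mathrm{d}r=\tfrac{1}{4\lambda^2}$ and the free outer variable contributes a factor $T$, so $\mathrm{E}[F_T^2\bar F_T]=\tfrac{1}{2\lambda^2}\,T^{-1/2}(1+o(1))$; in particular $|\mathrm{E}[F_T^2\bar F_T]|\asymp T^{-1/2}$ because $\lambda>0$ makes the limiting constant strictly positive. For the fourth-order term $\mathrm{E}[|F_T|^4]-2(\mathrm{E}[|F_T|^2])^2-|\mathrm{E}[F_T^2]|^2=\mathrm{E}[|F_T|^4]-2(\mathrm{E}[|F_T|^2])^2$, orthogonality of the $\I_{2,2}$ and $\I_{1,1}$ chaoses together with the isometry reduces it to a sum of squared partial-contraction norms $\|g\otimes_{i,j}g\|^2$; each such norm is $T^{-2}$ times an integral of order $T$ and hence $O(T^{-1})$. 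Combining, $M^{'}(F_T)=\max\{0,\,|\mathrm{E}[F_T^2\bar F_T]|,\,O(T^{-1})\}\asymp T^{-1/2}$, and through Theorem \ref{Main result2} this gives $\rho(F_T^{'},Z)\asymp T^{-1/2}$.

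The routine part is the upper bound, which follows once the chaos decomposition of $F_T^2$ is in hand and the contraction norms are bounded. The main obstacle is the lower bound: one must identify which third-order quantity survives and verify that its leading coefficient is nonzero. Here the point is the exact vanishing of $\mathrm{E}[F_T^3]$ (so the rate cannot be read off from the holomorphic third moment), while the rate is instead pinned by $\mathrm{E}[F_T^2\bar F_T]$, whose evaluation via the substitution $r=u-v$ produces the strictly positive constant $\tfrac{1}{2\lambda^2}$ guaranteed by $\lambda>0$.
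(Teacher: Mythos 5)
Your proposal is correct and follows essentially the same route as the paper: reduce via Theorem \ref{Main result2} to showing $M^{'}(F_T)\asymp T^{-1/2}$, compute the third-order quantities through the complex product formula \eqref{complex product} (the paper's Lemma \ref{3}: $\mathrm{E}[F_T^3]=0$ because the two indicator supports are incompatible, while $\mathrm{E}[F_T^2\bar{F_T}]\asymp T^{-1/2}$), and control the fourth-order quantity by squared contraction norms (the paper's Lemma \ref{4}). The only differences are cosmetic: you make the Slutsky step and the scaling relation $M^{'}(F_T^{'})\asymp M^{'}(F_T)$ explicit, and you settle for the one-sided bound $O(T^{-1})$ on the fourth-order term where the paper proves the two-sided estimate $\asymp T^{-1}$ --- which is all the maximum requires (and your leading constant $\tfrac{1}{2\lambda^2}$ for $\mathrm{E}[F_T^2\bar{F_T}]$ is consistent with a direct recomputation, the paper's displayed $\tfrac{1}{4\lambda^2\sqrt{T}}$ term notwithstanding).
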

	
	\begin{proof}
		By Theorem \ref{Main result2}, it suffices to show that 
		\begin{equation}
			M^{'}\left( \left(1+\frac{1}{2\lambda T}e^{-2\lambda T}-\frac{1}{2\lambda T} \right) ^{-\frac{1}{2}}F_T\right) \asymp \frac{1}{\sqrt{T}}.
		\end{equation}
		Equivalently, we need to prove that 
		\begin{align}\label{M(F_T)}
			&M^{'}(F_T)\\=&\,\max\left\lbrace \left| \mathrm{E}\left[F_T^3 \right] \right|, \left|\mathrm{E}\left[F_T^2\bar{F_T} \right] \right|  ,\mathrm{E}\left[\left| F_T \right| ^4 \right]-2\left(\mathrm{E}\left[ \left| F_T \right| ^2\right]  \right) ^2-\left| \mathrm{E}\left[ F_T^2\right] \right| ^2 \right\rbrace \\ \asymp& \,\frac{1}{\sqrt{T}}.
		\end{align}
		Combining the following Lemma \ref{3} and Lemma \ref{4}, we get \eqref{M(F_T)}. Then the proof is finished.
		
	\end{proof}

	\begin{Lemma}\label{3}
		$F_T$ is defined as \eqref{F_T}, then
		\begin{equation}
			\left| \mathrm{E}\left[F_T^3 \right] \right|=0, \quad \left|\mathrm{E}\left[F_T^2\bar{F_T} \right] \right|  \asymp \frac{1}{\sqrt{T}}.
		\end{equation}
	\end{Lemma}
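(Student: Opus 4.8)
The plan is to reduce both moments to complete contractions of the integrand by the complex product formula \eqref{complex product} and then to exploit the triangular supports of the Ornstein--Uhlenbeck kernels. Writing $\varphi_T:=\tfrac{1}{\sqrt T}\psi_T$, so that $F_T=\I_{1,1}(\varphi_T)$ and, by the definition of $h_T$, $\bar F_T=\I_{1,1}(\tfrac{1}{\sqrt T}h_T)$, I would first square $F_T$ using \eqref{complex product} with $a=b=c=d=1$:
\begin{equation}
	F_T^2=\I_{2,2}(\varphi_T\otimes\varphi_T)+\I_{1,1}\bigl(\varphi_T\otimes_{1,0}\varphi_T+\varphi_T\otimes_{0,1}\varphi_T\bigr)+\I_{0,0}(\varphi_T\otimes_{1,1}\varphi_T).
\end{equation}
A direct check shows $\varphi_T\otimes_{1,1}\varphi_T$ integrates $\mathbf{1}_{\{v\le u\}}\mathbf{1}_{\{u\le v\}}$ and hence vanishes (this is exactly $\mathrm{E}[F_T^2]=0$, already observed in the text), while both order-one contractions coincide, $\varphi_T\otimes_{1,0}\varphi_T=\varphi_T\otimes_{0,1}\varphi_T=\tfrac1T(t-s)e^{-\bar\gamma(t-s)}\mathbf{1}_{\{0\le s\le t\le T\}}=:k_T$. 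Thus $F_T^2=\I_{2,2}(\varphi_T\otimes\varphi_T)+2\,\I_{1,1}(k_T)$.

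Since distinct complex chaoses are orthogonal, $\mathrm{E}[\I_{p,q}(\cdot)]=0$ unless $p=q=0$, so both $\mathrm{E}[F_T^3]$ and $\mathrm{E}[F_T^2\bar F_T]$ equal the scalar $(0,0)$-component of the corresponding triple product. Multiplying the $\I_{2,2}$ term by an $\I_{1,1}$ factor only produces chaoses $(3,3),(2,2),(1,1)$, never $(0,0)$, so it is irrelevant, and it remains to extract the $(0,0)$-parts of $2\,\I_{1,1}(k_T)\I_{1,1}(\varphi_T)$ and of $2\,\I_{1,1}(k_T)\I_{1,1}(\tfrac1{\sqrt T}h_T)$, which by \eqref{complex product} are $2\,k_T\otimes_{1,1}\varphi_T$ and $2\,k_T\otimes_{1,1}\tfrac1{\sqrt T}h_T$ respectively. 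The crux is that these two full contractions carry opposite compatibility of supports: in $k_T\otimes_{1,1}\varphi_T$ the factor $\varphi_T(v;u)$ enforces $u\le v$ whereas $k_T$ enforces $v\le u$, so the integrand lives on a Lebesgue-null set and $k_T\otimes_{1,1}\varphi_T=0$, giving $\mathrm{E}[F_T^3]=0$; whereas the conjugate kernel $\tfrac1{\sqrt T}h_T(v;u)=\tfrac1{\sqrt T}e^{-\gamma(u-v)}\mathbf{1}_{\{v\le u\}}$ has the \emph{same} orientation as $k_T$, so both constraints are consistent and the contraction survives.

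For the surviving term I would then compute
\begin{equation}
	\mathrm{E}[F_T^2\bar F_T]=2\,k_T\otimes_{1,1}\tfrac1{\sqrt T}h_T=\frac{2}{T^{3/2}}\iint_{0\le v\le u\le T}(u-v)\,e^{-(\gamma+\bar\gamma)(u-v)}\,\mathrm{d}u\,\mathrm{d}v.
\end{equation}
Using $\gamma+\bar\gamma=2\lambda$ and the substitution $w=u-v$, the double integral equals $\int_0^T\!\int_0^{T-v}w\,e^{-2\lambda w}\,\mathrm{d}w\,\mathrm{d}v=\tfrac{T}{4\lambda^2}\bigl(1+o(1)\bigr)$ as $T\to\infty$, whence $\mathrm{E}[F_T^2\bar F_T]=\tfrac{1}{2\lambda^2\sqrt T}\bigl(1+o(1)\bigr)$. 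This is a positive real, so for $T$ large it lies between two positive multiples of $T^{-1/2}$, giving $|\mathrm{E}[F_T^2\bar F_T]|\asymp T^{-1/2}$; together with $|\mathrm{E}[F_T^3]|=0$ this proves the lemma.

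The genuinely delicate points are bookkeeping rather than analysis. One must apply \eqref{complex product} with the correct binomial and factorial constants and keep careful track of which slot of each kernel is holomorphic, because the entire dichotomy between $\mathrm{E}[F_T^3]=0$ and the $T^{-1/2}$ growth of $\mathrm{E}[F_T^2\bar F_T]$ rests on the orientation of a single indicator: conjugation in $h_T$ reverses $\mathbf{1}_{\{s\le t\}}$ into $\mathbf{1}_{\{t\le s\}}$, turning an incompatible (null-support) contraction into a compatible one. I therefore expect the main care to go into verifying $\varphi_T\otimes_{1,1}\varphi_T=0$ and $k_T\otimes_{1,1}\varphi_T=0$ from incompatible supports, and into the elementary but attention-requiring asymptotics of the final integral, where the positivity of the integrand guarantees that the factor $2$ produces no cancellation.
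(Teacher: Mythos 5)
Your proposal is correct and takes essentially the same route as the paper: expand $F_T^2$ by the complex product formula \eqref{complex product}, identify each expectation with the $(0,0)$-chaos component of the triple product, observe that $\mathrm{E}[F_T^3]$ vanishes because the contracted kernels have disjoint (triangular) supports, and evaluate the surviving full contraction $2\,k_T\otimes_{1,1}\tfrac{1}{\sqrt T}h_T$ to get the $T^{-1/2}$ rate, exactly as in the paper's computation of $2\bigl(\tfrac{1}{\sqrt T}\psi_T\otimes_{1,0}\tfrac{1}{\sqrt T}\psi_T\bigr)\otimes_{1,1}\tfrac{1}{\sqrt T}h_T$. The only discrepancy is immaterial: your leading constant $\tfrac{1}{2\lambda^2\sqrt T}$ is what the paper's own penultimate integral actually yields (its displayed $\tfrac{1}{4\lambda^2\sqrt T}$ looks like a factor-of-two slip), and either constant gives $\bigl|\mathrm{E}[F_T^2\bar F_T]\bigr|\asymp T^{-1/2}$.
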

	
	\begin{proof}
		According to the product formula of complex Wiener-It\^o integral \eqref{complex product}, we obtain that 
		\begin{align}
			F_T^2&=I_{1,1}\left( \frac{1}{\sqrt{T}}\psi_{T}\right) I_{1,1}\left( \frac{1}{\sqrt{T}}\psi_{T}\right) \\
			&=\sum_{i=0}^{1}\sum_{j=0}^{1}\binom{1}{i}^2\binom{1}{j}^2 i!j!I_{2-i-j,2-i-j}\left(\frac{1}{\sqrt{T}}\psi_{T}\otimes_{i, j}\frac{1}{\sqrt{T}}\psi_{T} \right) \\
			&=I_{2,2}\left(\frac{1}{\sqrt{T}}\psi_{T}\otimes\frac{1}{\sqrt{T}}\psi_{T} \right)+I_{1,1}\left(\frac{1}{\sqrt{T}}\psi_{T}\otimes_{1,0}\frac{1}{\sqrt{T}}\psi_{T} \right)\\&\quad+I_{1,1}\left(\frac{1}{\sqrt{T}}\psi_{T}\otimes_{0,1}\frac{1}{\sqrt{T}}\psi_{T} \right)+\frac{1}{\sqrt{T}}\psi_{T}\otimes_{1,1}\frac{1}{\sqrt{T}}\psi_{T},
		\end{align}
		\begin{align}
			F_T^3&=F_T^2F_T\\&=\sum_{i=0}^{1}\sum_{j=0}^{1}\binom{2}{i}\binom{1}{i}\binom{2}{j}\binom{1}{j} i!j!I_{3-i-j,3-i-j}\left(\left( \frac{1}{\sqrt{T}}\psi_{T}\otimes\frac{1}{\sqrt{T}}\psi_{T}\right) \otimes_{i, j}\frac{1}{\sqrt{T}}\psi_{T} \right) \\&\quad+ \sum_{i=0}^{1}\sum_{j=0}^{1}\binom{1}{i}^2\binom{1}{j}^2 i!j!I_{2-i-j,2-i-j}\left(\left( \frac{1}{\sqrt{T}}\psi_{T}\otimes_{1,0}\frac{1}{\sqrt{T}}\psi_{T}\right) \otimes_{i, j}\frac{1}{\sqrt{T}}\psi_{T} \right)\\&\quad+ \sum_{i=0}^{1}\sum_{j=0}^{1}\binom{1}{i}^2\binom{1}{j}^2 i!j!I_{2-i-j,2-i-j}\left(\left( \frac{1}{\sqrt{T}}\psi_{T}\otimes_{0,1}\frac{1}{\sqrt{T}}\psi_{T}\right) \otimes_{i, j}\frac{1}{\sqrt{T}}\psi_{T} \right)\\&\quad+\left( \frac{1}{\sqrt{T}}\psi_{T}\otimes_{1,1}\frac{1}{\sqrt{T}}\psi_{T}\right) I_{1,1}\left( \frac{1}{\sqrt{T}}\psi_{T}\right),
		\end{align}
		and 
		\begin{align}
			F_T^2\bar{F_T}&=\sum_{i=0}^{1}\sum_{j=0}^{1}\binom{2}{i}\binom{1}{i}\binom{2}{j}\binom{1}{j} i!j!I_{3-i-j,3-i-j}\left(\left( \frac{1}{\sqrt{T}}\psi_{T}\otimes\frac{1}{\sqrt{T}}\psi_{T}\right) \otimes_{i, j}\frac{1}{\sqrt{T}}h_{T} \right) \\&\quad+ \sum_{i=0}^{1}\sum_{j=0}^{1}\binom{1}{i}^2\binom{1}{j}^2 i!j!I_{2-i-j,2-i-j}\left(\left( \frac{1}{\sqrt{T}}\psi_{T}\otimes_{1,0}\frac{1}{\sqrt{T}}\psi_{T}\right) \otimes_{i, j}\frac{1}{\sqrt{T}}h_{T} \right)\\&\quad+ \sum_{i=0}^{1}\sum_{j=0}^{1}\binom{1}{i}^2\binom{1}{j}^2 i!j!I_{2-i-j,2-i-j}\left(\left( \frac{1}{\sqrt{T}}\psi_{T}\otimes_{0,1}\frac{1}{\sqrt{T}}\psi_{T}\right) \otimes_{i, j}\frac{1}{\sqrt{T}}h_{T} \right)\\&\quad+\left( \frac{1}{\sqrt{T}}\psi_{T}\otimes_{1,1}\frac{1}{\sqrt{T}}\psi_{T}\right) I_{1,1}\left( \frac{1}{\sqrt{T}}h_{T}\right) .
		\end{align}
		Taking expectation, we have that
		\begin{align}
			\mathrm{E}\left[ F_T^3\right] &=\left( \frac{1}{\sqrt{T}}\psi_{T}\otimes_{1,0}\frac{1}{\sqrt{T}}\psi_{T}\right) \otimes_{1, 1}\frac{1}{\sqrt{T}}\psi_{T}+\left( \frac{1}{\sqrt{T}}\psi_{T}\otimes_{0,1}\frac{1}{\sqrt{T}}\psi_{T}\right) \otimes_{1, 1}\frac{1}{\sqrt{T}}\psi_{T}\\
			&=2\left( \frac{1}{\sqrt{T}}\psi_{T}\otimes_{1,0}\frac{1}{\sqrt{T}}\psi_{T}\right) \otimes_{1, 1}\frac{1}{\sqrt{T}}\psi_{T}\\
			&=2\int_{0}^{\infty}\int_{0}^{\infty} \frac{1}{T^{3/2}}\mathbf{1}_{\left\lbrace 0\leq s\leq t\leq T\right\rbrace}\left(t-s \right) e^{-\bar{\gamma}(t-s)} e^{-\bar{\gamma}(s-t)}\mathbf{1}_{\left\lbrace 0\leq t\leq s\leq T\right\rbrace}\mathrm{d}s\mathrm{d}t\\
			&=0,
		\end{align}
		and 
		\begin{align}
			\mathrm{E}\left[ 	F_T^2\bar{F_T}\right] &=\left( \frac{1}{\sqrt{T}}\psi_{T}\otimes_{1,0}\frac{1}{\sqrt{T}}\psi_{T}\right) \otimes_{1, 1}\frac{1}{\sqrt{T}}h_{T}+\left(\frac{1}{\sqrt{T}}\psi_{T}\otimes_{0,1}\frac{1}{\sqrt{T}}\psi_{T}\right) \otimes_{1, 1}\frac{1}{\sqrt{T}}h_{T}\\
			&=2\left( \frac{1}{\sqrt{T}}\psi_{T}\otimes_{1,0}\frac{1}{\sqrt{T}}\psi_{T}\right) \otimes_{1, 1}\frac{1}{\sqrt{T}}h_{T}\\
			&=2\int_{0}^{\infty}\int_{0}^{\infty} \frac{1}{T^{3/2}}\mathbf{1}_{\left\lbrace 0\leq s\leq t\leq T\right\rbrace}\left(t-s \right) e^{-\bar{\gamma}(t-s)}e^{-\gamma(t-s)}\mathbf{1}_{\left\lbrace 0\leq s\leq t\leq T\right\rbrace} \mathrm{d}s\mathrm{d}t\\
			&=\frac{2}{T^{3/2}}\int_{0}^{T}\int_{0}^{t} \left(t-s \right) e^{-2\lambda(t-s)} \mathrm{d}s\mathrm{d}t\\
			&=\frac{2}{T^{3/2}}\int_{0}^{T}\int_{0}^{t} s e^{-2\lambda s} \mathrm{d}s\mathrm{d}t\\
			&=\frac{1}{2\lambda^2\sqrt{T}}e^{-2\lambda T}\left( 1+\frac{1}{\lambda T}\right)-\frac{1}{2\lambda^3 T^{3/2}}+\frac{1}{4\lambda^2\sqrt{ T}}\\
			&\asymp \frac{1}{\sqrt{T}}.
		\end{align}
		Then we get the conclusion.
	\end{proof}

	\begin{Lemma}\label{4}
		$F_T$ is defined as \eqref{F_T}, then
		\begin{equation}
			\mathrm{E}\left[\left| F_T \right| ^4 \right]-2\left(\mathrm{E}\left[ \left| F_T \right| ^2\right]  \right) ^2-\left| \mathrm{E}\left[ F_T^2\right] \right| ^2\asymp\frac{1}{T}.
		\end{equation}
	\end{Lemma}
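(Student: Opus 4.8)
The plan is to reduce the claim to a fourth-moment quantity and then estimate contractions of the kernel $\tfrac1{\sqrt T}\psi_T$. Recall from the proof of Lemma \ref{3} that $\mathrm{E}[F_T^2]=0$, so the quantity in the statement is exactly
\[
Q_T:=\mathrm{E}\left[|F_T|^4\right]-2\left(\mathrm{E}\left[|F_T|^2\right]\right)^2-\left|\mathrm{E}\left[F_T^2\right]\right|^2 .
\]
Writing $g=\tfrac1{\sqrt T}\psi_T\in\mathfrak{H}_{\mathbb{C}}^{\odot1}\otimes\mathfrak{H}_{\mathbb{C}}^{\odot1}$, so that $F_T=\I_{1,1}(g)$ by \eqref{F_T}, I would invoke the quantitative complex fourth moment theorem of \cite{chen2017fourth} (equivalently, expand $\mathrm{E}[|F_T|^4]=\mathrm{E}[F_T^2\,\overline{F_T^2}]$ by the product formula \eqref{complex product} and the isometry, exactly as the squares were expanded in Lemma \ref{3}, and use orthogonality of chaoses of distinct orders) to obtain, with $\bar g$ the complex conjugate of $g$,
\[
Q_T\asymp\sum_{(i,j)\in\{(1,0),(0,1)\}}\Big(\left\|g\otimes_{i,j}g\right\|_{\mathfrak{H}_{\mathbb{C}}^{\otimes2}}^2+\left\|g\otimes_{i,j}\bar g\right\|_{\mathfrak{H}_{\mathbb{C}}^{\otimes2}}^2\Big),
\]
the pairs $(1,0),(0,1)$ being the only nontrivial contractions for the $(1,1)$-chaos; the extreme contractions $(0,0)$ and $(1,1)$ only feed the Gaussian part $2(\mathrm{E}|F_T|^2)^2+|\mathrm{E}[F_T^2]|^2$ that has been subtracted off.

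Thus it remains to show that every contraction on the right has squared norm of order $1/T$. Carrying out one contraction explicitly, exactly as in the computation of $\mathrm{E}[F_T^2\bar F_T]$ in Lemma \ref{3}, gives
\[
(g\otimes_{1,0}g)(t;s)=\frac1T\,(t-s)\,e^{-\bar\gamma(t-s)}\mathbf{1}_{\{0\le s\le t\le T\}} ,
\]
whence, after the substitution $r=t-s$ and using $\int_0^\infty r^2 e^{-2\lambda r}\,\mathrm dr<\infty$,
\[
\left\|g\otimes_{1,0}g\right\|_{\mathfrak{H}_{\mathbb{C}}^{\otimes2}}^2=\frac1{T^2}\int_0^T\!\!\int_0^t(t-s)^2 e^{-2\lambda(t-s)}\,\mathrm ds\,\mathrm dt\asymp\frac1T .
\]
The three remaining contractions $g\otimes_{0,1}g$, $g\otimes_{1,0}\bar g$ and $g\otimes_{0,1}\bar g$ are of the same shape: after integrating out the contracted variable each equals $\tfrac1T$ times a primitive of the two exponentials supported on $\{0\le s\le t\le T\}$, with integrand of modulus at most $(t-s)e^{-\lambda(t-s)}$, so each has squared norm $O(1/T)$ by the same simplex estimate. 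This yields $Q_T\le c_2/T$.

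For the matching lower bound I would use that the right-hand side of the comparability is a sum of nonnegative terms, so it is bounded below by the single term $\|g\otimes_{1,0}g\|^2\asymp1/T$ already computed (note that this contraction, having clean modulus $(t-s)e^{-\lambda(t-s)}$, carries no oscillating phase that could cause cancellation); hence $Q_T\ge c_1/T$. Combining the two estimates gives $Q_T\asymp1/T$, which is the assertion.

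The step I expect to be the main obstacle is the first one: pinning down that $Q_T$ is genuinely comparable to the sum of the nontrivial contraction norms. If one prefers not to quote \cite{chen2017fourth}, one must track through the product-formula expansion of $F_T^2\overline{F_T^2}$ how the $(2,2)$- and $(0,0)$-chaos components recombine with $2(\mathrm{E}|F_T|^2)^2+|\mathrm{E}[F_T^2]|^2$, so that after cancellation only the $(1,1)$-level contraction norms survive with nonnegative weights; this combinatorial bookkeeping (rather than any analytic difficulty) is the delicate part, since all the surviving integrals are the elementary simplex integrals above and are uniformly of order $1/T$.
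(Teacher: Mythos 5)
Your overall strategy---reduce the left-hand side to contraction norms of the kernel and then evaluate simplex integrals---is the same as the paper's, and your evaluation of
\begin{equation}
\left\|g\otimes_{1,0}g\right\|_{\mathfrak{H}_{\mathbb{C}}^{\otimes2}}^{2}=\frac{1}{T^{2}}\int_{0}^{T}\int_{0}^{t}(t-s)^{2}e^{-2\lambda(t-s)}\,\mathrm{d}s\,\mathrm{d}t\asymp\frac{1}{T},
\qquad g:=\tfrac{1}{\sqrt{T}}\psi_{T},
\end{equation}
coincides with the paper's computation. The genuine gap is the reduction step itself, the one you flag as delicate. Writing, as you do, $Q_T$ for the quantity in the lemma, the two-sided comparability you posit,
\begin{equation}
Q_T\asymp\sum_{(i,j)\in\{(1,0),(0,1)\}}\left(\left\|g\otimes_{i,j}g\right\|_{\mathfrak{H}_{\mathbb{C}}^{\otimes2}}^{2}+\left\|g\otimes_{i,j}\bar g\right\|_{\mathfrak{H}_{\mathbb{C}}^{\otimes2}}^{2}\right),
\end{equation}
is never established, and it is not what the complex fourth-moment machinery hands you. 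What the paper invokes, from \cite[Lemma 2.3]{ChenHuWang2017}, is the exact identity
\begin{equation}
Q_T=\left\|g\otimes_{0,1}g\right\|_{\mathfrak{H}_{\mathbb{C}}^{\otimes2}}^{2}+\left\|g\otimes_{1,0}g\right\|_{\mathfrak{H}_{\mathbb{C}}^{\otimes2}}^{2}+\left\|g\otimes_{0,1}\tilde h+g\otimes_{1,0}\tilde h\right\|_{\mathfrak{H}_{\mathbb{C}}^{\otimes2}}^{2},
\qquad \tilde h:=\tfrac{1}{\sqrt{T}}h_{T},
\end{equation}
where $h_{T}(t,s)=\overline{\psi_{T}(s,t)}$ is the kernel of $\bar F_T$---conjugated \emph{and} transposed, not your plain conjugate $\bar g$. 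Note that the two mixed contractions enter only through the norm of their \emph{sum}, where cancellation can occur, so this identity does not give a lower bound of $Q_T$ by the four individual norms. Your suggested route to filling the gap is also inaccurate: it is not true that the $(0,0)$ and $(1,1)$ contractions ``only feed the Gaussian part''; the $(2,2)$-chaos component of $F_T^2$ has second moment $2\left(\mathrm{E}\left[\left|F_T\right|^{2}\right]\right)^{2}$ \emph{plus} a nonnegative mixed-contraction term, which is precisely the origin of the third summand above. Consequently your lower-bound argument is circular as written: bounding your right-hand side below by $\left\|g\otimes_{1,0}g\right\|_{\mathfrak{H}_{\mathbb{C}}^{\otimes2}}^{2}$ transfers to $Q_T$ only through the unproved (lower-bound) half of your comparability.

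The argument is repairable, and the repair is exactly what the paper does: take the displayed identity as the starting point. The upper bound $Q_T\le c_2/T$ then follows from your $O(1/T)$ estimates of the individual contractions together with $\|a+b\|^{2}\le2\|a\|^{2}+2\|b\|^{2}$ (the paper instead computes all three summands in closed form, each being $\asymp1/T$), and the lower bound $Q_T\ge c_1/T$ follows by discarding the two nonnegative summands other than $\left\|g\otimes_{1,0}g\right\|_{\mathfrak{H}_{\mathbb{C}}^{\otimes2}}^{2}$, which does occur as a standalone term. So the analytic content of your proposal is sound and matches the paper; what is missing is the structural identity that legitimizes the bookkeeping, and it cannot be replaced by the ``sum of nonnegative individual contraction norms'' picture you describe.
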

	
	\begin{proof}
		\cite[Lemma 2.3]{ChenHuWang2017} shows that
		\begin{align}
			&\mathrm{E}\left[\left| F_T \right| ^4 \right]-2\left(\mathrm{E}\left[ \left| F_T \right| ^2\right]  \right) ^2-\left| \mathrm{E}\left[ F_T^2\right] \right| ^2\\
			=&\, \left\|\left(\frac{1}{\sqrt{T}} \psi_{T}\right)  \otimes_{0, 1} \left(\frac{1}{\sqrt{T}} \psi_{T}\right)\right\|_{\mathfrak{H}_{\mathbb{C}}^{\otimes2}}^{2} + \left\|\left(\frac{1}{\sqrt{T}} \psi_{T}\right)  \otimes_{1, 0} \left(\frac{1}{\sqrt{T}} \psi_{T}\right)\right\|_{\mathfrak{H}_{\mathbb{C}}^{\otimes2}}^{2} \\
			&\,+ \left\|\left(\frac{1}{\sqrt{T}} \psi_{T}\right)  \otimes_{0, 1} \left(\frac{1}{\sqrt{T}} h_{T}\right)+\left(\frac{1}{\sqrt{T}} \psi_{T}\right)  \otimes_{1, 0} \left(\frac{1}{\sqrt{T}} h_{T}\right)\right\|_{\mathfrak{H}_{\mathbb{C}}^{\otimes2}}^{2}.
		\end{align}
		Calculating directly, we get that 
		\begin{align}
			\left(\frac{1}{\sqrt{T}} \psi_{T}\right)  \otimes_{0, 1} \left(\frac{1}{\sqrt{T}} \psi_{T}\right)(t,s)&=\left(\frac{1}{\sqrt{T}} \psi_{T}\right)  \otimes_{1, 0} \left(\frac{1}{\sqrt{T}} \psi_{T}\right)(t,s)\\
			&=\frac{1}{T}\int_{0}^{\infty}\psi_{T}(t,u)\psi_{T}(u,s)\mathrm{d}u\\
			&=\frac{1}{T}\int_{0}^{\infty}e^{-\bar{\gamma}(t-u)}\mathbf{1}_{\left\lbrace 0\leq u\leq t\leq T\right\rbrace}e^{-\bar{\gamma}(u-s)}\mathbf{1}_{\left\lbrace 0\leq s\leq u\leq T\right\rbrace}\mathrm{d}u\\
			&=\frac{1}{T}\mathbf{1}_{\left\lbrace 0\leq s\leq t\leq T\right\rbrace}\int_{s}^{t}e^{-\bar{\gamma}(t-s)}\mathrm{d}u\\
			&=\frac{1}{T}\mathbf{1}_{\left\lbrace 0\leq s\leq t\leq T\right\rbrace}\left(t-s \right) e^{-\bar{\gamma}(t-s)},
		\end{align}
		\begin{align}
			\left(\frac{1}{\sqrt{T}} \psi_{T}\right)  \otimes_{0, 1} \left(\frac{1}{\sqrt{T}} h_{T}\right)\left(t,s \right)&=\frac{1}{T}\int_{0}^{\infty}\psi_{T}(t,u)h_{T}(u,s)\mathrm{d}u\\
			&=\frac{1}{T}\int_{0}^{\infty}e^{-\bar{\gamma}(t-u)}\mathbf{1}_{\left\lbrace 0\leq u\leq t\leq T\right\rbrace}e^{-\gamma(s-u)}\mathbf{1}_{\left\lbrace 0\leq u\leq s\leq T\right\rbrace}\mathrm{d}u\\
			&=\frac{1}{T}\mathbf{1}_{\left\lbrace 0\leq s, t\leq T\right\rbrace}e^{-\bar{\gamma}t-\gamma s}\int_{0}^{t\wedge s}e^{2\lambda u }\mathrm{d}u\\
			&=\frac{1}{2\lambda T}\mathbf{1}_{\left\lbrace 0\leq s, t\leq T\right\rbrace}e^{-\bar{\gamma}t-\gamma s}\left( e^{2\lambda\left(t \wedge s \right)  }-1\right),
		\end{align}
		and
		\begin{align}
			\left(\frac{1}{\sqrt{T}} \psi_{T}\right)  \otimes_{1, 0} \left(\frac{1}{\sqrt{T}} h_{T}\right)\left(t,s \right) &=\frac{1}{T}\int_{0}^{\infty}\psi_{T}(u,s)h_{T}(t,u)\mathrm{d}u\\
			&=\frac{1}{T}\int_{0}^{\infty}e^{-\bar{\gamma}(u-s)}\mathbf{1}_{\left\lbrace 0\leq s\leq u\leq T\right\rbrace}e^{-\gamma(u-t)}\mathbf{1}_{\left\lbrace 0\leq t\leq u\leq T\right\rbrace}\mathrm{d}u\\
			&=\frac{1}{T}\mathbf{1}_{\left\lbrace 0\leq s, t\leq T\right\rbrace}e^{\bar{\gamma}s+\gamma t}\int_{t\vee s}^{T}e^{-2\lambda u }\mathrm{d}u\\
			&=\frac{1}{2\lambda T}\mathbf{1}_{\left\lbrace 0\leq s, t\leq T\right\rbrace}e^{\bar{\gamma}s+\gamma t}\left( e^{-2\lambda\left(t \vee s \right)  }-e^{-2\lambda T}\right) .
		\end{align}
		Then 
		\begin{align}
			&\left\|\left(\frac{1}{\sqrt{T}} \psi_{T}\right)  \otimes_{0, 1} \left(\frac{1}{\sqrt{T}} \psi_{T}\right)\right\|_{\mathfrak{H}_{\mathbb{C}}^{\otimes2}}^{2} \\=&\,\frac{1}{T^2}\int_{0}^{\infty}\int_{0}^{\infty} \mathbf{1}_{\left\lbrace 0\leq s\leq t\leq T\right\rbrace}\left(t-s \right)^2 e^{-\gamma(t-s)}e^{-\bar{\gamma}(t-s)} \mathrm{d}t\mathrm{d}s\\
			=&\,\frac{1}{T^2}\int_{0}^{T}\int_{0}^{t} \left(t-s \right)^2 e^{-2\lambda(t-s)} \mathrm{d}s\mathrm{d}t=\frac{1}{T^2}\int_{0}^{T}\int_{0}^{t} s^2 e^{-2\lambda s} \mathrm{d}s\mathrm{d}t\\
			=&\,\frac{1}{2\lambda^2}e^{-2\lambda T}\left( \frac{1}{2}+\frac{1}{\lambda T}+\frac{3}{4\lambda^2 T^2}\right)-\frac{3}{8\lambda^4 T^2}+\frac{1}{4\lambda^3 T}\asymp \frac{1}{T},
			\end{align}
		\begin{equation}
			\left\|\left(\frac{1}{\sqrt{T}} \psi_{T}\right)  \otimes_{1, 0} \left(\frac{1}{\sqrt{T}} \psi_{T}\right)\right\|_{\mathfrak{H}_{\mathbb{C}}^{\otimes2}}^{2}=	\left\|\left(\frac{1}{\sqrt{T}} \psi_{T}\right)  \otimes_{0, 1} \left(\frac{1}{\sqrt{T}} \psi_{T}\right)\right\|_{\mathfrak{H}_{\mathbb{C}}^{\otimes2}}^{2}\asymp \frac{1}{T},
		\end{equation}
		and
		\begin{align}
			&\left\|\left(\frac{1}{\sqrt{T}} \psi_{T}\right)  \otimes_{0, 1} \left(\frac{1}{\sqrt{T}} h_{T}\right)+\left(\frac{1}{\sqrt{T}} \psi_{T}\right)  \otimes_{1, 0} \left(\frac{1}{\sqrt{T}} h_{T}\right)\right\|_{\mathfrak{H}_{\mathbb{C}}^{\otimes2}}^{2}\\
			=&\,\frac{1}{4\lambda^2 T^2}\int_{0}^{\infty}\int_{0}^{\infty}  \mathbf{1}_{\left\lbrace 0\leq s, t\leq T\right\rbrace}\left( e^{-\bar{\gamma}t-\gamma s}\left( e^{2\lambda\left(t \wedge s \right)  }-1\right)+e^{\bar{\gamma}s+\gamma t}\left( e^{-2\lambda\left(t \vee s \right)  }-e^{-2\lambda T}\right)  \right) \\&\qquad\qquad\qquad\qquad\qquad
			\left( e^{-\gamma t-\bar{\gamma} s}\left( e^{2\lambda\left(t \wedge s \right)  }-1\right)+e^{\gamma s+\bar{\gamma} t}\left( e^{-2\lambda\left(t \vee s \right)  }-e^{-2\lambda T}\right)  \right) 
			\mathrm{d}s\mathrm{d}t\\
			=&\,	\frac{1}{2\lambda^2 T^2}\int_{0}^{T}\int_{0}^{t}  \left( e^{-\bar{\gamma}t-\gamma s}\left( e^{\left( \gamma+\bar{\gamma}\right)  s }-1\right)+e^{\bar{\gamma}s+\gamma t}\left( e^{-\left( \gamma+\bar{\gamma}\right)  t }-e^{-2\lambda T}\right)  \right) \\&\qquad\qquad\qquad\qquad\qquad
			\left( e^{-\gamma t-\bar{\gamma} s}\left( e^{\left( \gamma+\bar{\gamma}\right)  s }-1\right)+e^{\gamma s+\bar{\gamma} t}\left( e^{-\left( \gamma+\bar{\gamma}\right)  t }-e^{-2\lambda T}\right)  \right) 
			\mathrm{d}s\mathrm{d}t\\
			=&\,\frac{1}{4\lambda^2}e^{-2\lambda T}\left( 2+\frac{8}{\lambda T}+\frac{5}{\lambda^2 T^2}+\frac{1}{2\lambda^2 T^2}e^{-2\lambda T}\right)-\frac{11}{8\lambda^4 T^2}+\frac{1}{\lambda^3 T}\\
			\asymp &\frac{1}{T}.
		\end{align}
		Then the proof is finished.
	\end{proof}

	\subsection{Application for Wiener-It\^o integrals with kernels of step functions}\label{Section 4.2}
	
	In \cite[Section 5.1]{Campese2013}, Campese proposed this counterexample to explain that his techniques sometimes fail to provide the optimal rate of convergence. In this section, for this example, we apply our conclusions to get the optimal rate of convergence with respect to the distance $\rho(\cdot,\cdot)$. Specifically, let $\mathfrak{H}=L^2([0,1), \mu)$, where $\mu$ is the Lebesgue measure on $[0,1)$, and partition $[0,1)$ into $N$ equidistant intervals $\alpha_1, \alpha_2, \ldots, \alpha_N$ where $\alpha_k=\left[\frac{k-1}{N}, \frac{k}{N}\right)$ for $k=1, \ldots, N$. Define $f \in \mathfrak{H}^{\odot 2}$ as
	\begin{equation}\label{step function}
		f(x, y)=\sum_{i, j=1}^N a_{i j} \mathbf{1}_{\alpha_i}(x) \mathbf{1}_{\alpha_j}(y),
	\end{equation}
	where $a_{i j}\in\mathbb{R}$, $a_{i j}=a_{j i}$ for $1\leq i,j\leq d$. It is obvious that $f$ is uniquely determined by the symmetric matrix $A=\left(a_{i j}\right)_{1 \leq i, j \leq N}$. If $g$ is another kernel of the type \eqref{step function}, given by a matrix $B=\left(b_{i j}\right)_{1 \leq i, j \leq N}$, we have
	$$
	\begin{aligned}
		\left(f \otimes_1 g\right)(x, y) & =\int_0^1\left(\sum_{i, j=1}^N a_{i j} \mathbf{1}_{\alpha_i}(x) \mathbf{1}_{\alpha_j}(t)\right)\left(\sum_{k, l=1}^N b_{k l} \mathbf{1}_{\alpha_k}(y) \mathbf{1}_{\alpha_l}(t)\right) \mathrm{d} \mu(t) \\
		& =\sum_{i, j, k=1}^N a_{i j} b_{k j} \mu\left(\alpha_j\right) \mathbf{1}_{\alpha_i}(x) \mathbf{1}_{\alpha_k}(y) \\
		& =\frac{1}{N} \sum_{i, j, k=1}^N a_{i j} b_{j k} \mathbf{1}_{\alpha_i}(x) \mathbf{1}_{\alpha_k}(y),
	\end{aligned}
	$$
	and
	$$
	\left(f \tilde{\otimes}_1 g\right)(x, y)=\frac{1}{2 N} \sum_{i, j, k=1}^N\left(a_{i j} b_{j k}+a_{ kj} b_{ji}\right) \mathbf{1}_{\alpha_i}(x) \mathbf{1}_{\alpha_k}(y) .
	$$
	Therefore, $f \otimes_1 g$ can be identified with the matrix $C=\frac{1}{N} A B$ and $f \tilde{\otimes}_1 g$ by $\frac{1}{2}\left(C+C^T\right)$. Similarly, one can show that
	\begin{equation}\label{contraction}
		\langle f, g\rangle_{\mathfrak{H}^{\otimes 2}}=\frac{1}{N^2}\langle A, B\rangle_{\mathrm{HS}}=\frac{\operatorname{tr}\left(A B^T\right)}{N^2}.
	\end{equation}
	For simplicity, we fix $d=2$. For $n \geq 1$, we define 2-dimensional random vectors $F_n=\left(I_2\left(f_{n,1}\right),  I_2\left(f_{n,2}\right)\right)$, where the kernels $f_{n,1}$ and $f_{ n,2}$ are given by $(3n)\times(3n)$ matrices $A_{n,1}=\sqrt{n}\begin{pmatrix}
		\mathbf{0}_n & \mathbf{0}_n & \mathds{1}_n\\
		\mathbf{0}_n & \mathbf{0}_n & \mathbf{0}_n\\
		\mathds{1}_n & \mathbf{0}_n & \mathbf{0}_n
		\end{pmatrix}$ and  $A_{n,2}=\sqrt{n}\begin{pmatrix}
		\mathbf{0}_n & \mathbf{0}_n & \mathbf{0}_n\\
		\mathbf{0}_n & \mathds{1}_n & \mathbf{0}_n\\
		\mathbf{0}_n & \mathbf{0}_n & \mathbf{0}_n
	\end{pmatrix}$, respectively. Here, we denote by $\mathbf{0}_n$ the $n \times n$ matrix with all entries equal to 0 and $\mathds{1}_n$ the $n \times n$ matrix with entries on anti-diagonal equal to 1 and other entries equal to 0.

	 According to \eqref{cumulant for second chaos} and \eqref{contraction}, for $1\leq i,j,k\leq 2$, we get that 
	\begin{align}
		\kappa_{e_i+e_j}(F_n)&=2\left\langle f_{n,i} , f_{n,j} \right\rangle _{\mathfrak{H}^{\otimes 2}}=\frac{2\mathrm{Tr}\left(A_{n,i}A_{n,j} \right) }{9n^2}=\begin{cases}
		\frac{4}{9}	,& i=j=1,\\
		\frac{2}{9}	,& i=j=2,\\
		0	,& i\neq j,
		\end{cases}\\
		\kappa_{e_i+e_j+e_k}(F_n)&=2^2\cdot2!\left\langle f_{n,i} \tilde{\otimes}_1 f_{n,j}, f_{n,k} \right\rangle _{\mathfrak{H}^{\otimes 2}}\\&=\frac{8\mathrm{Tr}\left(\frac{1}{6n}\left( A_{n,i}A_{n,j}+A_{n,j}A_{n,i}\right) A_{n,k} \right) }{9n^2}\\&=\frac{8\mathrm{Tr}\left(A_{n,i}A_{n,j}A_{n,k} \right) }{27n^3}=\begin{cases}
			\frac{8}{27n^{3/2}}	,& n \mbox{ is odd and } i=j=k=2,\\
			0	,& otherwise.
		\end{cases}.
	\end{align}
    By a similar argument, we know that
	\begin{equation}
		\kappa_{4e_i}(F_n)=2^3\cdot3!\frac{\mathrm{Tr}\left(A_{n,i}^4 \right) }{(3n)^4}=\begin{cases}
		\frac{32}{27n}	, &i=1,\\
		\frac{16}{27n}	, &i=2.
		\end{cases}
	\end{equation}
	Therefore, as $n\rightarrow\infty$, $F_n$ converges in distribution to a 2-dimensional normal random vector $Z\sim\mathcal{N}_2\left( 0,\frac{2}{9}\begin{pmatrix}
		2&0\\
		0&1
	\end{pmatrix}\right) $ by the multidimensional Fourth Moment Theorem (see \cite[Theorem 1]{Peccati2005}), and 
	\begin{equation}
		M(F_n)=\max\left\lbrace \sum_{|m|=3}\left|  \kappa_{m}(F_n)\right|, \sum_{i=1}^{d}\kappa_{4e_i}(F_n)\right\rbrace\asymp \frac{1}{n}.
	\end{equation}
Then we obtain the following theorem.
\begin{Thm}
	For $n\geq 1$, define $F_n$ as above. Then $F_n$ converges in distribution to $Z\sim\mathcal{N}_2\left( 0,\frac{2}{9}\begin{pmatrix}
		2&0\\
		0&1
	\end{pmatrix}\right) $ as $n\rightarrow\infty$, and there exist two finite constants $0<c_1<c_2$ not depending on $n$ such that for $n$ large enough,
\begin{equation}
	c_1 \frac{1}{n}\leq \rho(F_n,Z)< c_2\frac{1}{n}.
\end{equation}
\end{Thm}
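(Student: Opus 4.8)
The plan is to read off both assertions directly from Theorem \ref{Main result1} together with the cumulant computations already recorded above; the example is designed precisely so that these inputs suffice. The conceptual point worth stressing is that although the kernels $f_{n,1},f_{n,2}$ are step functions — exactly the setting in which Campese's method in \cite{Campese2013} fails to deliver the optimal rate — Theorem \ref{Main result1} assumes nothing beyond convergence in distribution to a Gaussian vector whose components lie in a common Wiener chaos, so it applies here without modification.

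First I would establish the convergence $F_n\overset{d}{\rightarrow}Z$. Since $\kappa_{e_i+e_j}(F_n)$ is independent of $n$ and equals the $(i,j)$ entry of $C=\frac{2}{9}\begin{pmatrix}2&0\\0&1\end{pmatrix}$, the covariance matrix of $F_n$ is exactly $C$ for every $n$. Moreover $\kappa_{4e_i}(F_n)\asymp\frac1n\to0$, which forces $\|f_{n,i}\otimes_1 f_{n,i}\|_{\mathfrak{H}^{\otimes 2}}\to0$ for $i=1,2$ by \cite[Equation (5.2.6)]{nourdin2012normal}. Feeding these two facts into the multidimensional Fourth Moment Theorem \cite[Theorem 1]{Peccati2005} gives $F_n\overset{d}{\rightarrow}Z$.

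With convergence in hand the hypotheses of Theorem \ref{Main result1} hold with $q=d=2$, so there exist constants $0<c_1<c_2$ depending only on $q$ and $d$ with $c_1 M(F_n)\le\rho(F_n,Z)\le c_2 M(F_n)$ for $n$ large. It then remains only to evaluate $M(F_n)$. From the computed cumulants the sole nonzero third-order cumulant is $\kappa_{3e_2}(F_n)=\frac{8}{27 n^{3/2}}$ (and only for odd $n$), so $\sum_{|m|=3}|\kappa_m(F_n)|\le\frac{8}{27 n^{3/2}}$, whereas $\sum_{i=1}^{2}\kappa_{4e_i}(F_n)=\frac{32}{27n}+\frac{16}{27n}=\frac{16}{9n}$. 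Since $n^{-1}$ dominates $n^{-3/2}$, for $n$ large the maximum defining $M(F_n)$ is attained by the fourth-cumulant term, giving $M(F_n)=\frac{16}{9n}\asymp\frac1n$. Combining this with the two-sided estimate from Theorem \ref{Main result1} yields $\rho(F_n,Z)\asymp\frac1n$, which is the claim.

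There is no genuine obstacle, since the heavy lifting is carried out by Theorem \ref{Main result1} and by the trace evaluations of $\mathrm{Tr}(A_{n,i}A_{n,j})$, $\mathrm{Tr}(A_{n,i}A_{n,j}A_{n,k})$ and $\mathrm{Tr}(A_{n,i}^4)$ already in place. The only step requiring a little care is confirming that $M(F_n)$ is realized by the fourth cumulants rather than the third; this is exactly the case ``$M(F_n)$ acquired by $\sum_{i}\kappa_{4e_i}(F_n)$'' flagged in the introduction, which is what makes the step-function example a clean illustration of the theorem.
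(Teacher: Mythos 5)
Your proposal is correct and follows essentially the same route as the paper: compute the second, third and fourth cumulants from the trace formulas, deduce convergence in distribution from the constant covariance matrix $C=\frac{2}{9}\begin{pmatrix}2&0\\0&1\end{pmatrix}$ and the vanishing fourth cumulants via the multidimensional Fourth Moment Theorem \cite[Theorem 1]{Peccati2005}, observe that $M(F_n)=\frac{16}{9n}\asymp\frac{1}{n}$ since the fourth-cumulant sum dominates the third-cumulant sum $\frac{8}{27n^{3/2}}$, and conclude by Theorem \ref{Main result1}. Your identification of this as the case where $M(F_n)$ is realized by the fourth cumulants matches the paper's intent exactly.
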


	\subsection{Application for vector-valued Toeplitz quadratic functional }\label{Section 4.3}
	
	Let $X=\left(X_t\right)_{t \in \mathbb{R}}$ be a centered real-valued stationary Gaussian process with a covariance
	function $r(t): \mathbb{R} \rightarrow \mathbb{R}$ and a integrable and even spectral density $f(\lambda): \mathbb{R} \rightarrow \mathbb{R}$. This is, for every $u, t \in \mathbb{R}$, one has
	$$
	E\left(X_u X_{u+t}\right):=r(t)=\hat{f}(t):=\int_{-\infty}^{+\infty} e^{\mathrm{i} \lambda t} f(\lambda) \mathrm{d} \lambda,
	$$
	where $\hat{f}$ denotes its Fourier transform. We consider normalized random variable $$\tilde{Q}_{g,T}=\frac{Q_{g,T}-\mathrm{E}\left(Q_{g,T}\right)}{\sqrt{T}},$$
	where $Q_{g,T}$ is called Toeplitz quadratic functional of the process $X$ associated  with some integrable even function $g$ and $T>0$, defined as
	$$
	Q_{g,T}=\int_0^T\int_0^T \hat{g}(t-s) X(t) X(s) \mathrm{d} t \mathrm{d}s.
	$$
	Given $T>0$ and $\psi \in \mathrm{L}^1(\mathbb{R})$, we denote by $B_T(\psi)$ the truncated Toeplitz operator associated with $\psi$ and $T$, defined on $\mathrm{L}^2(\mathbb{R})$ as
	$$
	B_T(\psi)(u)(t)=\int_0^T u(x) \hat{\psi}(t-x) \mathrm{d} x, \quad t \in \mathbb{R}.
	$$
	Given $\psi, \gamma \in \mathrm{L}^1(\mathbb{R})$, let $B_T(\psi) B_T(\gamma)$ be the product of the two operators $B_T(\psi)$ and $B_T(\gamma)$. We denote by $\operatorname{Tr}(A)$ the trace of an operator $A$.
	
	We refer reader to \cite{Avram1988,Fox1987,Ginovian1994,Ginovyan2005,Ginovyan2007,Giraitis1990,Grenander1958} for the central limit theorems for Toeplitz quadratic functionals of discrete-time and continuous-time stationary Gaussian processes. Choosing even functions $g_1, \ldots, g_d \in L^1(\mathbb{R})$, we consider the random vector $G_T=\left(G_{1, T}, \ldots, G_{d, T}\right)$ defined by setting $G_{i, T}=\tilde{Q}_{g_i, T}$ for $1 \leq i \leq d$ and $T>0$.
	
	\begin{Thm}{\cite[Theorem 5.3]{Campese2013}}\label{Cumulant of Toeplitz quadratic functional}
		Let $m \in \mathbb{N}_0^d$ be a multi-index with $|m| \geq 2$ and elementary decomposition $\left\{l_1, \ldots, l_{|m|}\right\}$. For $1\leq i\leq |m|$, let $g_{l_i}=g_j$ if $l_i=e_j$, $1\leq j\leq d$. Then
		\begin{enumerate}[(i)]
			\item The cumulant $\kappa_m\left(G_T\right)$ is given by
			$$
			\kappa_m\left(G_T\right)=T^{-|m| / 2} 2^{|m|-1}(|m|-1) ! \operatorname{Tr}\left[B_T(f)^{|m|} \prod_{i=1}^{|m|} B_T\left(g_{l_i}\right)\right] .
			$$
			\item If $f \in L^1(\mathbb{R}) \cap L^{q_0}(\mathbb{R})$ and $g_i \in L^1(\mathbb{R}) \cap L^{q_i}(\mathbb{R})$ such that $1 / q_0+1 / q_i \leq 1 /|m|$ for $1 \leq i \leq d$, then
			$$
			\lim _{T \rightarrow \infty} T^{|m| / 2-1} \kappa_m\left(G_T\right)=2^{|m|-1}(|m|-1) !(2\pi)^{2|m|-1} \int_{-\infty}^{\infty} f^{|m|}(x) \prod_{i=1}^{|m|} g_{l_i}(x) \mathrm{d} x .
			$$
			\item If $f \in L^1(\mathbb{R}) \cap L^{q_0}(\mathbb{R})$ and $g_i \in L^1(\mathbb{R}) \cap L^{q_i}(\mathbb{R})$ such that $1 / q_0+1 / q_i \leq 1 / 2$ for $1 \leq i \leq d$, then
			$$
			G_T \overset{d}{\rightarrow}Z\sim \mathcal{N}_d(0, C), \quad T \rightarrow \infty,
			$$
			where the covariance matrix $C=\left(C_{i j}\right)_{1 \leq i, j \leq d}$ is given by
			$$
			C_{i j}=16\pi^3\int_0^{\infty} f^2(x) g_i(x) g_j(x) \mathrm{d} x .
			$$
		\end{enumerate}
	\end{Thm}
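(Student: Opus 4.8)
The plan is to realise each normalised functional $G_{i,T}$ as an element of the second Wiener chaos, reduce the joint cumulants to traces of products of truncated Toeplitz operators, and then pass to the limit via a Szeg\H{o}-type trace theorem and the method of cumulants. For part (i), I would start from the spectral representation $X_t=\int_{\mathbb{R}}e^{\mathrm{i}\lambda t}\sqrt{f(\lambda)}\,\mathrm{d}W(\lambda)$ of the stationary process and use it to write $Q_{g_i,T}$ as a second-chaos variable $I_2(h_{i,T})$; since joint cumulants of order $\geq 2$ are unaffected by centering and are multilinear in their entries, $\kappa_m(G_T)=T^{-|m|/2}\kappa_m(I_2(h_{l_1,T}),\ldots,I_2(h_{l_{|m|},T}))$. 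The kernel $h_{i,T}$ is identified with the self-adjoint Hilbert--Schmidt operator $A_{i,T}=B_T(f)^{1/2}B_T(g_i)B_T(f)^{1/2}$, under which the contraction $\tilde{\otimes}_1$ becomes operator composition and $\langle\cdot,\cdot\rangle_{\mathfrak{H}^{\otimes 2}}$ becomes the Hilbert--Schmidt inner product, that is, a trace. Applying \eqref{cumulant for second chaos} and using that, for self-adjoint operators, cyclic- and transpose-invariance of the trace collapse the symmetrised iterated contraction into the single trace $\operatorname{Tr}[A_{l_1,T}\cdots A_{l_{|m|},T}]$, I would merge the factors $B_T(f)^{1/2}$ cyclically to reach the claimed identity $\kappa_m(G_T)=T^{-|m|/2}2^{|m|-1}(|m|-1)!\operatorname{Tr}[\prod_{i=1}^{|m|}B_T(f)B_T(g_{l_i})]$.

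For part (ii), I would invoke the continuous analogue of the first Szeg\H{o} limit theorem for products of truncated Toeplitz operators developed in \cite{Ginovian1994,Ginovyan2005}: if $\psi_1,\ldots,\psi_k\in L^1(\mathbb{R})$ with $\psi_j\in L^{p_j}(\mathbb{R})$ and $\sum_j 1/p_j\leq 1$, then $T^{-1}\operatorname{Tr}[\prod_{j=1}^k B_T(\psi_j)]\to(2\pi)^{k-1}\int_{\mathbb{R}}\prod_{j=1}^k\psi_j(x)\,\mathrm{d}x$. Here $k=2|m|$, with the $|m|$ copies of $f$ (in $L^{q_0}$) alternating with the $g_{l_i}$ (in $L^{q_{l_i}}$); summing the hypothesis $1/q_0+1/q_i\leq 1/|m|$ over $i$ yields $|m|/q_0+\sum_i 1/q_{l_i}\leq 1$, so the theorem applies and gives the stated limit $2^{|m|-1}(|m|-1)!(2\pi)^{2|m|-1}\int_{\mathbb{R}}f^{|m|}\prod_i g_{l_i}$. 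I expect this to be the main obstacle: the proof of that trace theorem rests on delicate Schatten-norm estimates, approximating products of truncated Toeplitz operators by Toeplitz operators of products and controlling the errors through bounds of the form $\|B_T(\psi)\|_{S^p}\leq C\,T^{1/p}\|\psi\|_{L^p}$ together with H\"older's inequality for traces, which is exactly where the integrability exponents $q_0,q_i$ are consumed.

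For part (iii), under the weaker hypothesis $1/q_0+1/q_i\leq 1/2$ I would specialise (ii) at $|m|=2$ to obtain $\kappa_{e_i+e_j}(G_T)\to C_{ij}$ (the stated constant following once the evenness of $f,g_i,g_j$ is used to fold the integral onto $[0,\infty)$), and then use the boundedness counterpart of the same trace theory to get $\operatorname{Tr}[\prod_{i=1}^{|m|}B_T(f)B_T(g_{l_i})]=O(T)$ for all $|m|\geq 2$, so that $\kappa_m(G_T)=O(T^{1-|m|/2})\to 0$ for every $|m|\geq 3$. The delicate point here is that this boundedness must be established under the weaker condition $\leq 1/2$ alone, for which the clean convergence of (ii) is unavailable and one relies instead on Schatten-class interpolation. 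Since the second cumulants converge to $C$ while all cumulants of order at least three vanish, the method of cumulants --- equivalently, the multidimensional fourth-moment theorem of \cite{Peccati2005}, since the $G_{i,T}$ all live in the second chaos --- yields $G_T\overset{d}{\rightarrow}Z\sim\mathscr{N}_d(0,C)$.
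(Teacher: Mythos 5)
First, a framing remark: the paper does not prove this theorem at all --- it is quoted verbatim, with citation, from Campese \cite[Theorem 5.3]{Campese2013}, so there is no internal proof to compare against; your proposal has to be judged against the statement itself and the literature it rests on (\cite{Ginovian1994,Ginovyan2005}). For parts (i) and (ii) your outline follows the standard route --- second-chaos representation, identification of contractions and inner products with operator products and traces, the cumulant formula \eqref{cumulant for second chaos}, and Ginovian's trace theorem with the correct exponent bookkeeping $|m|/q_0+\sum_{i=1}^{|m|}1/q_{l_i}\leq 1$ --- and is essentially sound, with one caveat: for $|m|\geq 4$ cyclicity and transposition do \emph{not} collapse the symmetrized iterated contraction to a single trace (e.g.\ $\operatorname{Tr}[A_1A_2A_3A_4]$ and $\operatorname{Tr}[A_2A_1A_3A_4]$ are genuinely different for symmetric operators), so the exact cumulant is an average of traces over orderings; this is harmless for (ii)--(iii) because every ordering has the same limiting integral, but your claim of an exact collapse is not correct as stated.

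The genuine gaps are in part (iii). First, the constant: specializing (ii) to $|m|=2$ gives $\lim_{T\to\infty}\kappa_{e_i+e_j}(G_T)=2(2\pi)^3\int_{-\infty}^{\infty}f^2g_ig_j\,\mathrm{d}x=32\pi^3\int_0^{\infty}f^2g_ig_j\,\mathrm{d}x$ after folding by evenness, which is \emph{twice} the stated $C_{ij}$; parts (ii) and (iii) as quoted are mutually inconsistent by a factor of $2$, and your assertion that evenness ``yields the stated constant'' is an arithmetic slip that an actual computation would have exposed. Second, and more seriously, your mechanism for killing the cumulants of order $\geq 3$ requires $\operatorname{Tr}\bigl[\prod_{i=1}^{|m|}B_T(f)B_T(g_{l_i})\bigr]=O(T)$ under only $1/q_0+1/q_i\leq 1/2$. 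Schatten--H\"older together with the standard bound $\|B_T(\psi)\|_{S^p}\leq c\,T^{1/p}\|\psi\|_{L^p}$ (for $p\geq2$) yields only $O\bigl(T^{|m|/q_0+\sum_i 1/q_{l_i}}\bigr)=O(T^{|m|/2})$, because $f\in L^1\cap L^{q_0}$ gives no integrability beyond $q_0$ and hence no admissible H\"older exponents summing to $1$; this bound gives $\kappa_m(G_T)=O(1)$, not $o(1)$. Obtaining $O(T)$ needs exactly the strong condition of (ii), which is unavailable --- this is the entire difficulty of the weak-condition CLT, and it is not resolved by ``Schatten-class interpolation''. The arguments that actually work either approximate $f,g_i$ by bounded symbols and control the error purely in variance (only the four-operator trace asymptotics, valid under $\leq 1/2$, are needed for that), or combine covariance convergence with the already-known one-dimensional CLT of Ginovyan--Sahakyan for each component and conclude by the Peccati--Tudor theorem, thereby bypassing higher cumulants entirely. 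Your invocation of Peccati--Tudor only as an ``equivalent'' reformulation still routes through the unproven cumulant bound, so as written part (iii) does not close.
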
 
	
	Suppose that $C$ is invertible. We denote by $C_T$ the covariance matrix of $G_T$. Then for $T$ large enough, $G_T$ is invertible. We now consider random vector $G_T^{'}=C^{\frac{1}{2}}C_T^{-\frac{1}{2}}G_T$. Note that each component $G_{i, T}=\tilde{Q}_{g_i, T}$ of $G_T$ can be represented as a double Wiener-It\^o integral with respect to $X$. Combining Theorem \ref{Main result1} and Theorem \ref{Cumulant of Toeplitz quadratic functional}, we obtain the optimal rate of convergence of $G_T^{'}$ to multivariate normal distribution $Z\sim\mathcal{N}_d(0, C)$ under the distance $\rho(G_T^{'}, Z)$ as $T\rightarrow\infty$. We point out that the optimal rate of convergence given in Theorem \ref{example 3} is more explicit compared to \cite[Proposition 5.3]{Campese2013}.
	
	\begin{Thm}\label{example 3}
		If $f \in L^1(\mathbb{R}) \cap L^{q_0}(\mathbb{R})$ and $g_i \in L^1(\mathbb{R}) \cap L^{q_i}(\mathbb{R})$ such that $1 / q_0+1 / q_i \leq 1 / 4$ for $1 \leq i \leq d$, then
		$G_T^{'} \overset{d}{\rightarrow} Z\sim\mathcal{N}_d(0, C)$ as $\rightarrow \infty$. Moreover,
		\begin{enumerate}[(i)]
			\item If $\int_{-\infty}^{\infty} f^{3}(x) \prod_{i=1}^{3} g_{l_i}(x) \mathrm{d} x\neq 0$ for some multi-index $m$ with $|m|=3$ and elementary decomposition $\left\{l_1, l_2, l_{3}\right\}$, then there exist two finite constants $0<c_1<c_2$ not depending on $T$ such that for $T$ large enough,
			\begin{equation}
				c_1\frac{1}{\sqrt{T}}\leq \rho(G_T^{'}, Z) \leq c_2\frac{1}{\sqrt{T}}.
			\end{equation}
			\item If $\int_{-\infty}^{\infty} f^{4}(x) g_{i}^4(x) \mathrm{d} x\neq 0$ for some $1\leq i\leq d$ and  $\int_{-\infty}^{\infty} f^{3}(x) \prod_{i=1}^{3} g_{l_i}(x) \mathrm{d} x= 0$ for any multi-index $m$ with $|m|=3$ and elementary decomposition $\left\{l_1, l_2, l_{3}\right\}$, then there exist two finite constants $0<c_1<c_2$ not depending on $T$ such that for $T$ large enough,
			\begin{equation}
				c_1\frac{1}{T}\leq \rho(G_T^{'}, Z) \leq c_2\frac{1}{T}.
			\end{equation}
		\end{enumerate}  
	\end{Thm}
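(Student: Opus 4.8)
The plan is to reduce everything to Theorem \ref{Main result1} with $q=2$, since each component $G_{i,T}=\tilde Q_{g_i,T}$ is a double Wiener-It\^o integral with respect to $X$ and hence lies in the second Wiener chaos, and then to read off the order of $M(G_T')$ from the cumulant asymptotics of Theorem \ref{Cumulant of Toeplitz quadratic functional}. First I would establish the convergence and the covariance matching. Taking $|m|=2$ in Theorem \ref{Cumulant of Toeplitz quadratic functional}(ii), whose hypothesis is implied by $1/q_0+1/q_i\le 1/4$, gives $C_{ij,T}=\kappa_{e_i+e_j}(G_T)\to 16\pi^3\int f^2 g_ig_j=C_{ij}$, so $C_T\to C$ in Hilbert-Schmidt norm. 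Since $C$ is invertible, $A_T:=C^{1/2}C_T^{-1/2}\to \mathrm{Id}_d$, and by construction $G_T'=A_TG_T$ has covariance matrix exactly $C$. Combining $G_T\overset{d}{\rightarrow}Z$ from Theorem \ref{Cumulant of Toeplitz quadratic functional}(iii) with $A_T\to\mathrm{Id}_d$ yields $G_T'\overset{d}{\rightarrow}Z$. As the covariance of $G_T'$ equals $C$ and its components live in the second Wiener chaos, Theorem \ref{Main result1} applies and gives $\rho(G_T',Z)\asymp M(G_T')$, so it only remains to determine the order of $M(G_T')$.

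Next I would transfer cumulants from $G_T$ to $G_T'$. By multilinearity of cumulants under the linear map $A_T=\mathrm{Id}_d+o(1)$, each $\kappa_m(G_T')$ equals the diagonal term $(1+o(1))\kappa_m(G_T)$ plus cross terms that carry at least one off-diagonal entry of $A_T-\mathrm{Id}_d$ and are therefore $o(1)$ times cumulants of $G_T$ of the same order; consequently $\kappa_m(G_T')$ and $\kappa_m(G_T)$ share the same leading asymptotics. Applying Theorem \ref{Cumulant of Toeplitz quadratic functional}(ii) with $|m|=3$ and $|m|=4$ then gives $\kappa_m(G_T)\sim 8(2\pi)^5 T^{-1/2}\int f^3\prod_{i=1}^3 g_{l_i}$ and $\kappa_{4e_i}(G_T)\sim 48(2\pi)^7 T^{-1}\int f^4 g_i^4$.

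In case (i) some third-order integral is nonzero, so $\sum_{|m|=3}|\kappa_m(G_T')|\asymp T^{-1/2}$ dominates $\sum_i\kappa_{4e_i}(G_T')=O(T^{-1})$, whence $M(G_T')\asymp T^{-1/2}$ and the first claim follows. In case (ii) every third-order integral vanishes while some fourth-order integral does not, so $\sum_i\kappa_{4e_i}(G_T')\asymp T^{-1}$; the conclusion $M(G_T')\asymp T^{-1}$ then hinges entirely on showing $\sum_{|m|=3}|\kappa_m(G_T')|=O(T^{-1})$, and this is the main obstacle. The general inequality $\sum_{|m|=3}|\kappa_m(G_T')|\le c\sqrt{\sum_i\kappa_{4e_i}(G_T')}$ only yields $O(T^{-1/2})$, which is too weak to pin down $M(G_T')\asymp T^{-1}$.

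To close this gap I would invoke the second-order trace asymptotics for truncated Toeplitz operators from \cite{Ginovian1994,Ginovyan2005}. By Theorem \ref{Cumulant of Toeplitz quadratic functional}(i) one has $\kappa_m(G_T)=8\,T^{-3/2}\operatorname{Tr}[B_T(f)^3\prod_{i=1}^3 B_T(g_{l_i})]$ for $|m|=3$, and the linear-in-$T$ part of this trace is proportional to the principal integral $\int f^3\prod_{i=1}^3 g_{l_i}$. When this integral vanishes for every third-order multi-index $m$, the leading $O(T)$ contribution cancels and the refined expansion leaves a bounded remainder, so $\kappa_m(G_T)=O(T^{-3/2})$; transferring through $A_T\to\mathrm{Id}_d$ as above gives $\sum_{|m|=3}|\kappa_m(G_T')|=O(T^{-3/2})=o(T^{-1})$. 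Hence in case (ii) the fourth-order cumulants dominate, $M(G_T')\asymp T^{-1}$, and the proof is complete. The delicate point throughout is exactly the passage from the first-order limit in Theorem \ref{Cumulant of Toeplitz quadratic functional}(ii), which is insensitive to the speed of cancellation, to the genuine rate of the third cumulants once their principal symbols vanish; this is where the auxiliary trace expansions from the cited literature are indispensable.
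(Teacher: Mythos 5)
Your route is the same as the paper's: normalize the covariance by setting $G_T'=C^{1/2}C_T^{-1/2}G_T$, note that every component of $G_T'$ lives in the second Wiener chaos, apply Theorem \ref{Main result1} (in the form of the proposition allowing $C_T\to C$) to get $\rho(G_T',Z)\asymp M(G_T')$, and read off the order of $M(G_T')$ from the cumulant asymptotics of Theorem \ref{Cumulant of Toeplitz quadratic functional}. Your verification of the convergence $G_T'\overset{d}{\rightarrow}Z$, the transfer of cumulants through $A_T=C^{1/2}C_T^{-1/2}\to\mathrm{Id}_d$, and all of case (i) are correct, and in fact more detailed than what the paper records, since the paper states Theorem \ref{example 3} with no proof beyond the sentence that it follows by ``combining'' Theorems \ref{Main result1} and \ref{Cumulant of Toeplitz quadratic functional}. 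You have also correctly located the crux of case (ii): the claimed rate $\rho(G_T',Z)\asymp T^{-1}$ forces $\sum_{|m|=3}\left|\kappa_m(G_T')\right|=O(T^{-1})$, whereas Theorem \ref{Cumulant of Toeplitz quadratic functional}(ii) is a limit statement without a rate, so the vanishing of the limits only yields $\kappa_m(G_T)=o(T^{-1/2})$ for $|m|=3$.

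However, your patch for this gap is not a proof. You assert, citing \cite{Ginovian1994,Ginovyan2005}, that once $\int f^3\prod_{i=1}^3 g_{l_i}\,\mathrm{d}x=0$ the trace $\operatorname{Tr}\left[B_T(f)^3\prod_{i=1}^3 B_T(g_{l_i})\right]$ is $O(1)$, hence $\kappa_m(G_T)=O(T^{-3/2})$. Those references contain first-order trace asymptotics (of the type already reproduced in Theorem \ref{Cumulant of Toeplitz quadratic functional}(ii)), not a second-order expansion with bounded remainder, and such a remainder bound does not hold in general under the pure integrability hypotheses $f\in L^1\cap L^{q_0}$, $g_i\in L^1\cap L^{q_i}$. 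Already for a product of two operators one has
\begin{equation}
	\operatorname{Tr}\left[B_T(\psi)B_T(\gamma)\right]-2\pi T\int_{-\infty}^{\infty}\psi(x)\gamma(x)\,\mathrm{d}x
	=-\int_{-T}^{T}|u|\,\hat\psi(u)\hat\gamma(-u)\,\mathrm{d}u
	-T\int_{|u|>T}\hat\psi(u)\hat\gamma(-u)\,\mathrm{d}u,
\end{equation}
which is $O(1)$ only under an additional decay or smoothness condition such as $\int|u|\,|\hat\psi(u)\hat\gamma(u)|\,\mathrm{d}u<\infty$; membership in $L^1\cap L^p$ gives nothing of the sort, and the same phenomenon occurs for the six-fold products entering the third cumulants. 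So the step ``$\kappa_m(G_T)=O(T^{-3/2})$ in case (ii)'' is unsupported: under the stated hypotheses the third cumulants could a priori decay at an intermediate rate between $T^{-1/2}$ and $T^{-1}$, in which case Theorem \ref{Main result1} would force $\rho(G_T',Z)$ to have that intermediate order, contradicting the claimed $T^{-1}$. To be fair, this lacuna is equally present in the paper, which never addresses the point; but as a proof of case (ii) your argument has a genuine gap, and closing it requires either extra hypotheses on $f$ and the $g_i$ (decay of their Fourier transforms, or Besov-type regularity) or a quantitative trace estimate that neither you nor the paper supplies.
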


\end{document}